\numberwithin{equation}{section}
\theoremstyle{plain}
\newtheorem{theorem}[subsubsection]{Theorem}
 \newtheorem{proposition}[subsubsection]{Proposition}
 \newtheorem{corollary}[subsubsection]{Corollary}
 \newtheorem{conjecture}[subsubsection]{Conjecture}
 \theoremstyle{definition}
\newtheorem{remark}[subsubsection]{Remark}
\newcommand{\rmL}{\mathrm{L}}
\newcommand{\rmT}{\mathrm{T}}
\newcommand{\rmHHH}{\mathrm{HHH}}
\newcommand{\CC}{\mathbb{C}}
\newcommand{\cc}{\mathbb{C}}
\newcommand{\ZZ}{\mathbb{Z}}
\newcommand{\bbL}{\mathbb{L}}
\newcommand{\sfZ}{\mathsf{Z}}
\newcommand{\scZ}{\mathscr{Z}}
\newcommand{\scX}{\mathscr{X}}
\newcommand{\bfX}{\mathbf{X}}
\newcommand{\bfY}{\mathbf{Y}}
\newcommand{\bfZ}{\mathbf{Z}}
\newcommand{\bfW}{\mathbf{W}}
\newcommand{\bfm}{\mathbf{m}}
\newcommand{\bfk}{\mathbf{k}}
\newcommand{\calF}{\mathcal{F}}
\newcommand{\cf}{\mathcal{F}}
\newcommand{\calG}{\mathcal{G}}
\newcommand{\cg}{\mathcal{G}}
\newcommand{\calO}{\mathcal{O}}
\newcommand{\calB}{\mathcal{B}}
\newcommand{\calP}{\mathscr{P}}
\newcommand{\calH}{\mathcal{H}}
\newcommand{\frg}{\mathfrak{g}}
\newcommand{\frb}{\mathfrak{b}}
\newcommand{\frn}{\mathfrak{n}}
\newcommand\aff{\textup{aff}}
\newcommand{\Gr}{\textup{Gr}}
\newcommand\loc{\textup{loc}}
\newcommand\st{\textup{st}}
\newcommand{\Tr}{\textup{Tr}}
\newcommand\Hom{\textup{Hom}}
\newcommand\End{\textup{End}}
\newcommand{\Ext}{\textup{Ext}}
\newcommand{\DG}{\mathrm{DG}}
\newcommand{\HHH}{\mathrm{HHH}}
\newcommand\GL{\textup{GL}}
\newcommand\gl{\mathfrak{gl}}
\newcommand\SO{\textup{SO}}
\newcommand{\Ad}{\textup{Ad}}
\newcommand{\bl}{\bullet}
\newcommand{\RR}{\mathbb{R}}
\newcommand{\quash}[1]{}
\newcommand{\pt}{\mathsf{p}}
\newcommand{\sm}{\textup{sm}}
\newcommand{\Br}{\mathfrak{Br}}
\newcommand{\Hilb}{\textup{Hilb}}
\newcommand{\MF}{\mathrm{MF}}
\newcommand{\calX}{\mathcal{X}}
\newcommand{\calZ}{\mathcal{Z}}
\newcommand{\calC}{\mathcal{C}}
\newcommand{\Fl}{\mathrm{Fl}}
\newcommand{\Tqt}{\mathbb{T}_{q,t}}
\newcommand{\thc}{\dddot{\mathrm{Cat}}}
\newcommand{\thcs}{\thc_{\tsym}}
\newcommand{\ob}{\mathrm{Obj}}
\newcommand{\thcm}{\thc}
\newcommand{\CH}{\mathsf{CH}}
\newcommand{\HC}{\mathrm{HC}}
\newcommand{\KN}{\mathrm{KN}}
\newcommand{\HHom}{\mathbb{H}\mathrm{om}}
\newcommand{\cHom}{\mathcal{H}\mathrm{om}}
\newcommand{\Sbim}{\mathbb{S}\mathrm{bim}}
\newcommand{\bim}{\mathbb{B}\mathrm{im}}
\newcommand{\Brn}{\Br_n}
\def\Hilb{ \mathrm{Hilb}}
\newcommand{\ti}{\times}
\newcommand{\alg}{\mathrm{alg}}
\newcommand{\tft}{TQFT}
\newcommand{\Def}{\tDef}
\def\rst{ \mathrm{st}}
\def\tper{\mathrm{per}}
\def\tsym{\mathrm{sym}}
\def\tgl{ \mathrm{gl}}
\def\tObj{\mathrm{Obj}}
\def\tid{\mathrm{id}}
\def\tDr{ \mathrm{Dr}}
\def\tlin{ \mathrm{lin}}
\def\ttst{\mathrm{st}}
\def\tfs{\mathrm{fs}}
\def\tloc{\mathrm{loc}}
\def\ttaut{\mathrm{taut}}
\def\rmTs{ \mathrm{T}^* }
\def\sptm{space-time }
\def\tDef{\mathrm{Def}}
\def\tfrm{ \mathrm{f}}
\def\sfZf{\sfZ^{\tfrm}}
\def\rmD{ \mathrm{D}}
\def\Dper{\rmD^{\tper}}
\def\DG{\mathrm{DG}}
\def\tcoh{\mathrm{coh}}
\def\DGcp{ \DG_{\tcoh}^{\tper}}
\def\tCat{\mathrm{Cat}}
\def\thcgb{ \dddot{\Cat}^{\raisebox{-2pt}{$\scriptstyle\bullet$}}_{\tgl} }
\def\tqft{TQFT}
\def\sfp{ \mathsf{p}}
\def\sfpn{ \sfp_n }
\def\sfpnv#1{ \sfp_{n_{#1}}}
\def\sfpno{ \sfpnv{1}}
\def\sfpnt{ \sfpnv{2}}
\def\sfZ{ \mathsf{Z}}
\def\sfZv#1{ \sfZ^{#1}}
\def\sfZb{ \sfZv{\bullet}}
\def\bfn{ \mathbf{n}}
\def\bfnb{ \bfn^{\bullet}}
\def\bfnf{ \bfn^{\tfrm}}
\def\bfnbv#1{ \bfnb_{#1}}
\def\bfnbo{ \bfnbv{1}}
\def\bfnbt{ \bfnbv{2}}
\def\tctg{2-category}
\def\tctgs{2-categories}
\def\hctg{3-category}
\def\GLv#1{\GL_{#1}}
\def\GLn{ \GLv{n}}
\def\GLnv#1{ \GLv{n_{#1}}}
\def\GLno{ \GLnv{1}}
\def\GLnt{ \GLnv{2}}
\def\gln{\gl_{n}}
\def\glnv#1{ \gl_{n_{#1}}}
\def\glno{ \glnv{1}}
\def\glnt{ \glnv{2}}
\def\TsGLn{ \rmTs\GLn }
\def\Adv#1{ \Ad_{#1}}
\def\Gr{ \mathrm{Gr}}
\def\Grv#1{ \Gr_{#1}}
\def\IC{ \mathbb{C}}
\def\xWfr{ \xW^{\rmfr}}
\def\hmf{HOMFLY-PT}
\def\hmfh{\hmf\ homology}
\def\hmflh{\hmf\ link homology}
\def\obOe{ \calO_{\varnothing}}
\def\KRS{KRS}
\newcommand{\cY}{\mathscr{Y}}
\newcommand{\ie}{\textit{i}.\textit{e}.}
\title{3D TQFT and HOMFLYPT homology}
\author{A. Oblomkov}
\address{
A.~Oblomkov\\
Department of Mathematics and Statistics\\
University of Massachusetts at Amherst\\
Lederle Graduate Research Tower\\
710 N. Pleasant Street\\
Amherst, MA 01003 USA
}
\email{oblomkov@math.umass.edu}
\author{L. Rozansky}
\address{
L.~Rozansky\\
Department of Mathematics\\
University of North Carolina at Chapel Hill\\
CB \# 3250, Phillips Hall\\
Chapel Hill, NC 27599 USA
}
\email{rozansky@math.unc.edu}
\begin{document}
\maketitle

\begin{abstract}

We describe a family of 3d topological B-models whose target spaces are Hilbert schemes of points in $\CC^2$. The interfaces separating theories with different numbers of points correspond to braid strands. The Hilbert space of the picture of a closed braid is the HOMFLY-PT homology of the corresponding link.

%************

% We propose a 3D TQFT such that its Hilbert space on $S^2$, which intersects with defect surfaces along a (possibly self-intersecting) curve $C$ is the HOMFLYPT homology of the link whose diagram is $C$.
%    Previously this homology was interpreted as the space of sections of a special
%   2-periodic complex of coherent sheaves on \(\Hilb_n(\CC^2)\).
%   TQFT perspective provides a natural explanation for this interpretation, since the category $\rmD^{\tper}\mathit{Coh}(\Hilb_n(\CC^2))$
%   is the Drinfeld center of the two-category of assigned to a point by our TQFT.

%  In this note we propose a 3D TQFT that has $S^2\times \RR$ as space-time and such  that partition function value on
 % $ S^2\times \RR $ with the defect surface equal \(C\times \RR\) is the HOMFLYPT homology of the knot
 % with the projection \(C\). Previously this homology were interpreted as space of sections of the corresponding
 % 2-periodic complex of coherent sheaf on \(\Hilb_n(\CC^2)\).
 % TQFT perspective provides a natural explanation for this interpretation since the category $D^{per}_{coh}(\Hilb_n(\CC^2))$
 % is the Drinfeld center of the two-category of assigned to a point by our TQFT.
\end{abstract}

%%%%%%%%%%%% end of Lev's definitions %%%%%%%%%%

%%%%%%%%%%%%%%%%%%%%%%%%%%%%%%%%%%%%%%%%%%%%%%%%%%%%%%5

%%%%%%%%%%%%%%%%%%%%%%%%%%%%%%%%%%%%%%%%%%%%%%%%%%%%%%5
%$\thcgf$

\def\IR{\mathbb{R}}
\def\mTN{\mathrm{mTN}}
\def\IRvv#1#2{ \IR^{#1}_{#2} }
\def\SO{ \mathrm{SO}}
\def\IRo{ \IR^1 }
\def\IRodr{ \IR^1_{\mathrm{dr}}}
\def\IRbr{ \IR_{\mathrm{br}}}
\def\IRbrh{ \IRbr^3 }
\def\IRbrhp{ {\IRbrh}' }
\def\IRhhg{ \IR^3_{\mathrm{h}}}
\def\IRhcl{ \IR^3_{\mathrm{c}}}
\def\xaddt{additional}
\def\Soh{ S^1_3}
\def\Sohp{ S^{1\prime}_3}
\def\xcr{ \alpha }
\def\xcra{ \alpha }
\def\xcraI{ \xcra_{\xInt}}
\def\xmTN#1#2{#1\mathrm{TN}_{#2}}
\def\xmTNcp{ \xmTN{\nsm}{3'7'8'9'} }
\def\xmTNcc{\nsm \mathrm{TN} }
\def\nsm{ m }
\def\tA{ \tilde{A}}
\def\tAm{ \tA_{\nsm-1} }
\def\tAmf{ \tA_{\nsm}}
\def\Amf{ A_{\nsm}}
\def\tAz{\tA_0 }

\def\rU{ \mathrm{U}}

\def\xn{ n }
\def\rmT{ \mathrm{T}}
\def\rmTs{ \rmT^* }
\def\cX{\mathbf{X}}
\def\rmsm{ \mathrm{sym}}
\def\cXsm{ \cY^{\rmsm}}
\def\cXsmv#1{ \cXsm_{#1}}
\def\cXsme{ \cXsmv{e}}
\def\cXQ{ \cX_Q }
\def\cY{ \mathcal{Y}}

 \def\cZsm{ \cZ^{\rmsm}}
\def\cZsm{ \cZ}

\def\cZsmv#1{ \cZsm_{#1}}
\def\cZsme{ \cZsmv{e}}

\def\Cat{\mathrm{Cat}}
\def\tCat{ \ddot{\Cat}}
\def\hCat{ \dddot{\Cat}}

\def\catB{ \ddot{\mathrm{Cat}}}
\def\catBh{ \dddot{\mathrm{Cat}}}
\def\catBv#1{ \catB(#1) }
\def\xG{ G }

 \def\cZ{ \mathscr{Z}}

\def\sm{ \mathrm{sym}}
\def\arr{\mathrm{arr}}
\def\bow{ \mathrm{bow}}
\def\cXar{ \cX^{\arr}}
\def\cXbw{ \cX^{\bow}}
\def\cXsm{ \cY}
\def\cXsmv#1{ \cXsm_{#1}}
\def\cXsmrn{ \cXsmv{\bqn;\bqr}}
\def\cXv#1{ \cX_{#1}}
\def\cXrn{ \cXv{\bqn;\bqr}}
\def\xGv#1{ \xG_{#1}}
\def\GL{ \mathrm{GL}}
\def\GLv#1{ \GL(#1) }
\def\xGbn{ \xGv{\bqn}}
\def\xW{ W }
\def\xWe{ \xW_e }
\def\xWbw{ \xW_{\mathrm{bow}}}
\def\xWsqr{ \xW_{\mathrm{square}}}
\def\xWsm{ \xW_{\rmsm}}
\def\bMF{ \mathbf{MF}}

\def\qr{ r }
\def\qn{ n }
\def\qk{ k }
\def\qi{ i }
\def\qvc{ v }
\def\bqr{ \mathbf{\qr}}
\def\bqn{ \mathbf{\qn}}
\def\xphi{ \varphi }
\def\xbphi{ \boldsymbol{\xphi}}
\def\rmfr{ \mathrm{fr}}
\def\yphi{ \xphi^{\rmfr}}
\def\xpsi{ \psi }
\def\xfph{ \xpsi^{\rmfr}}
\def\tX{\tilde{X}}
\def\tX{\tilde{Y}}
\def\tqvc{ \tilde{\qvc}}

\def\xrho{ \rho }
\def\xbrho{ \boldsymbol{\rho}}

\def\xQ{ Q }
\def\xQv#1{ \xQ(#1)}
\def\xQbrn{ \xQv{\bqr;\bqn}}
\def\xQbrnp{ \xQv{\bqr;\bqn'}}
\def\xQbr{ \xQv{\bqr}}
\def\rmor{\mathrm{or}}
\def\xQor{ \xQ^{\rmor}}
\def\xQorv#1{ \xQor(#1)}
\def\xQorbrn{ \xQorv{\bqr;\bqn}}
\def\xQorbrnp{ \xQorv{\bqr;\bqn'}}
\def\xRp{ \mathrm{Rep}}
\def\RpQorbrn{ \xRp(\xQorbrn) }
\def\cM{ \mathcal{M}}
\def\cMv#1{ \cM_{#1}}
\def\cMbrn{ \cMv{\bqr;\bqn}}
\def\cMbrnp{ \cMv{\bqr;\bqn'}}

\def\dsq{ /\!/ }

\def\IRz{ \IR_0 }
\def\xSg{ \Sigma }
\def\xSgot{ \xSg_{12}}

\def\MtB{ M^{10}_{\mathrm{IIB}}}
\def\MtA{ M^{10}_{\mathrm{IIA}}}
\def\IRtot{ \IRvv{2}{12}}

\def\brt{ b }
\def\ctcr{ \beta }

\def\xInt{ I }

\def\cC{ \mathcal{C}}

\def\gLc{generalized Lagrangian correspondence}
\def\gLcs{\gLc s}
\def\auv{auxiliary variety}
\def\auvs{auxiliary varieties}
\def\sptn{super-potential}

\def\sstr{super-string}
\def\cmvr{commuting variety}
\def\cmvrs{commuting varieties}

\def\xPhi{ \Phi }
\def\xPhior{ \xPhi^{\rmor}}
\def\xPhiorz{ \xPhior_0 }

\def\xmu{ \mu }
\def\xrho{\rho }
\def\xsgm{ \sigma }

\def\rmN{ \mathrm{N}}
\def\rmNve{ \rmN^\vee }
\def\xpip{ \pi' }

\def\cM{ \mathscr{M}}
\def\xxin{ \cM }
\def\xxinv#1{ \xxin_{#1}}
\def\xxinnr{ \xxinv{\qn;\qr}}
\def\xxinnpr{ \xxinv{\qn';\qr}}
\def\xxinnkr{ \xxinv{\qn+\qk;\qr}}
\def\xxinzr{ \xxinv{0;\qr}}
\def\xxinnmor{ \xxinv{\qn-1;\qr}}

\def\obFL{ \mathcal{F}}
\def\obFLv#1{ \obFL_{#1}}
\def\obFLn{ \obFLv{\qn}}
\def\obFLz{ \obFLv{0}}
\def\obFLz{ \mathcal{O}_0 }

\def\obSL{ \mathcal{S}}
\def\obSLv#1{ \obSL_{#1}}
\def\obSLbl{ \obSLv{\bldl}}

\def\aff{ \mathrm{aff}}
\def\gBr{ \mathfrak{Br}}
\def\gBrv#1{ \gBr_{#1}}
\def\gBrn{ \gBrv{\qn}}
\def\gBraf{ \gBr^{\aff}}
\def\gBrafv#1{ \gBraf_{#1}}
\def\gBrafn{ \gBrafv{\qn}}

\def\brb{\beta}
\def\ctm#1{ \hat{#1}}
\def\hbrb{ \ctm{\brb}}
\def\idb{ 1 }
\def\hidb{ \ctm{\idb}}

\def\Lbrb{ L_{\brb}}

\def\rmH{ \mathrm{H}}

\def\Vqn{ V_{\qn}}

\def\xPhi{ \Phi }
\def\xPhiwn{ \xPhiwv{\qn}}
\def\xPhibn{ \xPhibv{\qn}}
\def\xPhiwi{ \xPhiwv{\qi}}
\def\xPhiwo{ \xPhiwv{1}}
\def\xPhiwnp{ \xPhiwv{\qn,\qn'}}
\def\xPhibnp{ \xPhibv{\qn,\qn'}}

\def\xPhiwk{ \xPhiwv{\qk}}
\def\xPhibk{ \xPhibv{\qk}}

\def\xPhiwv#1{ \widehat{\Box}_{#1}}
\def\xPhibv#1{ \widehat{\blacksquare}_{#1}}
\def\xPsiwv#1{ \widecheck{\Box}_{#1}}
\def\xPsibv#1{ \widecheck{\blacksquare}_{#1}}

\def\xPsiwn{ \xPsiwv{\qn}}
\def\xPsibn{ \xPsibv{\qn}}
\def\xPsiwk{ \xPsiwv{\qk}}
\def\xPsibk{ \xPsibv{\qk}}

\def\brsg{\sigma}
\def\brsgv#1{ \brsg_{#1}}
\def\brsgi{ \brsgv{i}}

\def\bldl{ \boldsymbol{\lambda}}
\def\bldl{\lmb}
\def\lmb{\lambda}

\def\Ann{ \mathrm{Ann}}

\section{Introduction}
\label{sec:introduction}
In our previous papers we proposed a  construction of the \hmflh\ which is based on an interaction between two quiver varieties: the cotangent bundle to the flag variety and the Hilbert scheme of points in $\CC^2$. For $\qr,\qn\geq 0$ denote $\xxinnr$ the moduli space of $\qn$ $U(\qr)$ instantons in $\IC^2$. Equivalently, $\xxinnr$ is the Nakajima quiver variety of the framed $\tAz$ quiver:
\begin{equation}
\label{eq:tazq}
\begin{tikzpicture}%[baseline=1.5cm]
\draw[very thick,-] (-0.15,-0.15) to [out=200,in=180] (0,-1.2);
\draw[very thick,-] (0.15,-0.15) to [out=-20,in=0] (0,-1.2);
\node [left] at (-0.15, 0) {$\qn$};
\node[left] at (-0.15,1) {$\qr$};
\draw[very thick,-] (0,0.15) -- (0,0.85);
\draw[very thick,fill=white] (-0.15,0.85) rectangle ++(0.3,0.3);
\draw[very thick,fill=white] (0,0) circle [radius=0.15];
\end{tikzpicture}
\end{equation}
Following~\cite{KapustinSaulinaRozansky09} and~\cite{KapustinRozansky10} we associate to $\xxinnr$ the 2-category $\catB(\xxinnr)$.
 We consider a special flag-related object $\obFLn$ in this category. Using our previous results \cite{OblomkovRozansky16} we construct a monoidal functor
\[\Phi:\gBrn\longrightarrow \Hom(\obFLn,\obFL_n),\qquad \brb\mapsto \Phi(\beta),
\]
where $\gBrn$ is a braid group on $\qn$ strands.
%This category is similar to $\GLv{\qn}$-equivariant sheaves and admits the endofunctor of tensoring objects with representations of $\GL(\qn)$.

% We show that when $\qr=1$, this homomorphism factors through the ordinary braid group:
%\[
%\begin{tikzcd}
%\gBrafn \ar[r]
%&
%\gBrn \ar[r]
%&
%\End(\obFLn).
%\end{tikzcd}
%\]
Our main knot theoretic  result  from \cite{OblomkovRozansky16},\cite{OblomkovRozansky20} can be recast into
\begin{theorem}
If     $\brb\in \Brn $ is a braid, then in case of $\qr=1$ there is an isomorphism
\begin{equation}\label{eq:old}
\mathcal{H}om_{\Hom(\obFL_n,\obFL_n)} (\Phi(1_n), \Phi(\beta)\otimes\Lambda^\bullet \Vqn) \cong \rmHHH(L(\beta)).
\end{equation}
Here $\idb_n$ is the identity braid, $\Vqn=\CC^n$ is the defining representation of $\GL(\qn)$, $L(\beta)$ is the link constructed by closing $\brb$ and $\mathrm{HHH}(L(\beta))$ is its \hmfh.
\end{theorem}

In this paper we consider this construction of link homology from a \hctg\ perspective.
We outline a construction of a 3-category \(\hCat(Q)\) for any $\tilde{A}_n$-type quiver \(Q\). In the case \(Q=\tAz\) we provide a conjectural description of a part of
3-categorical structure by relating our setting to the theory of monoidal 2-categories of Kapranov and Voevodsky \cite{KapranovVoevodsky94a}, in section~\ref{sec:monoidal-cat} we construct the relevant monoidal 2-category \(\tCat(Q)\). That allows us to define the 3-category \(\hCat(Q)\) as a subcategory of 
module 2-category over \(\tCat(Q)\) in section~\ref{sec:modular-cat-def}.

The 2-category \(\catB(\xxinnr)\) emerges
%Also in this case we interpret the two-category \(\catB(\xxinnr)\)
as the 2-category of morphisms
between the objects \(\tAz(n;r)\) and \(\tAz(0;0)\), that is \(\catB(\xxinnr)=\HHom(\tAz(n;r),\tAz(0;0))\).
The objects of the \hctg\ $\hCat(\tAz)$ are $\tAz(n;r)$ or, equivalently, pairs of numbers $(\qn;\qr)$. For two pairs $(\qn;\qr)$ and $(\qn+\qk;\qr)$ we define two `box' functors:
\begin{equation}
\label{eq:ns5sq}
\xPhiwk,\xPhibk
%\xPhiwnp,\xPhibnp
\colon \catB(\xxinnr)\longrightarrow \catB(\xxinnkr).
\end{equation}
The functors represent \gLcs\ between $\xxinnr$ and $\xxinnkr$, and those originate from the edges of the second Nakajima quiver which is `transversal' to the $\tAz$ quiver\eqref{eq:tazq} in the sense of Gaiotto-Witten defects \cite{GaiottoWitten09} in Kapustin-Witten theory \cite{KapustinWitten07}, or brane arrangements in the superstring theory, see section~\ref{sec:phys}.

The quiver variety $\xxinzr$ is just a point, hence its \tctg\ $\catB(\xxinzr)$ has a canonical object $\obOe$, and we show in proposition~\ref{prop:comp12} that $\obFLn$ is the result of applying $\xPhiwo$  $\qn$ times
%$\xPhiwi:=\xPhiwv{i-1,i}$
to this object:
\[
\obFLn = \LaTeXunderbrace{\xPhiwo\cdots\xPhiwo}_{\qn} \obOe \in \catB(\xxinnr).
\]

Each individual $\xPhiwo$ corresponds to a braid strand,  and an elementary braid $\brsgi$ which braids the $i$-th and the $(i+1)$-st string, becomes a natural self-transformation of the composition:
\[
\brsgi \rightsquigarrow \xPhiwo\xPhiwo\colon\catB(\xxinv{i-1,\qr})\longrightarrow\catB(\xxinv{i+1,\qr}).
\]
%$\xPhiwv{i+1}\xPhiwi$.

When $\qr=0$ we define two more `box' functors
\begin{equation}
\label{eq:d4sq}
\xPsiwk,\xPsibk
\colon \catB(\xxinv{\qn,0})\longrightarrow \catB(\xxinv{\qn+\qk,0}).
\end{equation}
These functors are related to Cherkis bow edges of the transversal `arrow-bow' quiver.
For an $\qn$-box, $m$-row Young diagram $\lmb$ with rows $\lmb_1\geq \cdots \geq \lmb_m\geq 0$
%For a partition $\bldl = (\lmb_1,\ldots,\lmb_m)$, $\sum\lmb_i = n$,
we consider an object
\[
\obSLbl = \xPsiwv{\lmb_1}\cdots\xPsiwv{\lmb_m}\obOe \in \catB(\xxinv{\qn+\qk,0})
\]
related to the equivariant Slodowy slice of a nilpotent matrix corresponding to $\bldl$. The category of morphisms between $\obFLn$ and $\obSLbl$ is the category of sheaves on (the S-dual of) the Nakajima quiver variety related to the weight space $\bldl$ in the tensor product of $\qn$ fundamental representations of $\GL(m)$.

Most recently,
the related categories were studied by Nakajima, Takayama \cite{NakajimaTakayama17} in the context of 3D mirror symmetry (see section~\ref{sec:annular-homology} for more discussion).
The case of $\obSLv{\frac{\qn}{2},\frac{\qn}{2}}$  is related to the equivariant resolution of the Slodowy slice for the two-block nilpotent orbit in \(\mathfrak{gl}_{2n}\).
In  \cite{AnnoNandakumar16} it was conjectured that the corresponding  category
can be used to construct the annular $\GL(2)$ link homology \cite{AsaedaPrzytyckiSikora04}. We explain
a precise statement of the conjecture and its TQFT interpretation in section~\ref{sec:annular-homology}.
%and its generalization to links in $\Ann\times S^1$, where $\Ann$ is the annulus.

%****************

% The functors \(\xPsiwk,\xPsibk\) applied to the canonical object \(\calO\)  yields
% an objects that has category of coherent sheaves on a well-known space.
% The space in the question is
% a resolution of the Slodowy slice for the two-block nilpotent orbit in \(\mathfrak{gl}_{2n}\). This category is central for a conjectural construction of the annular Khovanov homology \cite{AsaedaPrzytyckiSikora04} as conjectured in \cite{AnnoNandakumar16}.

%***************************

The box functors have a natural interpretation as defects in 3d B-models considered by Kapustin-Rozansky-Saulina \cite{KapustinSaulinaRozansky09},\cite{KapustinRozansky10}.
This allows us to expand our results in three directions.

First, we provide details of the mathematical construction of
some the structural morphisms of 3-category \(\hCat(Q)\) for a linear quiver.

Second, we use basic TQFT axioms to give a physical interpretation of the
Chern functor from our previous work \cite{OblomkovRozansky18a} and thus provide an explanation for the appearance of the Hilbert scheme of points on $\IC^2$
in knot homology \cite{OblomkovRozansky20}.

Third, we unify several homology theories  of the closures of an affine braid \(\beta\in \Br_n^{\aff}\) by interpreting them as the TQFT partitions functions on the annulus with external boundary consisting of \(n\) NS5 defects braided into the braid \(\beta\). More precisely, by setting an appropriate configuration of the defects on the internal boundary of the annulus  we obtain: Hochschild homology of the Rouquier complex \cite{KhovanovRozansky08b}, the Hilbert scheme version of the triply braided homology \cite{OblomkovRozansky20} and the trace of the braid action on \(\mathfrak{sl}_2\) conformal blocks.

While exploring the categorical foundations of Kapustin-Rozansky-Saulina (KRS) theory \cite{KapustinSaulinaRozansky09}, we avoid working with 3-category of KRS directly. Instead, we work
with the de-looped version \(\tCat(Q)\)  of the 3-category. The 3-categorical nature of the 2-category \(\tCat(Q)\) is reflected in the
2-categorical monoidal structure of \(\tCat(Q)\).
%We conjecture that \(\tCat(Q)\) is a strict monoidal 2-category and we show that the strict monoidal
%2-category \(\mathrm{Kh}_\infty\) from \cite{MorrisonWalkerWedrich19} is a subcategory of \(\tCat(\hat{A}_0)\).
The categorical details can be found in section~\ref{sec:cats},
in the rest of the introduction we discuss the last two directions that are concerned the most topologically motivated case TQFT that is based on the three category
\(\hCat(\hat{A}_0)\).

\subsection{Partition-function evaluation for HOMFLY-PT homology}
The  initial construction in \cite{OblomkovRozansky16}
is based on the homomorphism from the braid group to a special category of matrix factorizations
which appears as
%. As we explain in this text this category is
the category of endomorphism of the object  \(\calF_n\in\catB(\xxinv{n;1})\) \(\MF^{\rst}:=\Hom(\calF_n,\calF_n)\):
\begin{equation}\label{eq:MFst}
\Br_n\to \MF^{\rst}=\MF_{\GL_n}\bigl((\gl_n\ti \CC^n \ti \mathrm{T}^*\Fl\ti \mathrm{T}^*\Fl)^{\rst},W\bigr)\footnote{In the original
  paper we worked with the affine version of this category of matrix factorizations. },\quad W=\mu_1-\mu_2,
\end{equation}
where \(\Fl\) is the flag variety, \(\mu\) is the moment map of the $\GL_n$ action on $\rmTs\Fl$ and the index `st'  stands for the stability
condition.

% The  category of matrix factorizations in \eqref{eq:MFst} is
%  motivated by the 3D \tft\ from the papers \cite{KapustinRozansky10},\cite{KapustinSaulinaRozansky09},
%  %we  call the corresponding  \tft\
%  called the KRS model or, equivalently, a 3d topological gauged B-model. Here we make the connection between the KRS model  and the
%  HOMFLYPT homology more precise and also provide a \tft-style explanation for the conjectures from \cite{OblomkovRozansky18a}
%  for the Drinfeld center of the category \(\MF^{\rst}\).

The object \(\calF_n\) appears as a boundary condition for the \KRS\ theory with the target \[\xxinnr=\bigl(\rmTs\gl_n\times \Hom(\CC^r,\CC^n)\bigr)^{\rst}//\GL_n.\]
The cases \(r=0\) and \(r=1\) are the most relevant for the link homology. We refer to the first case as {\it unframed} and to the second case as {\it  framed}.
We choose the \sptm of all our 3d TQFTs to be
\(S^2\ti\mathbb{R}\).% as \sptm for all our 3D TQFTs.
%The target spaces for these families are \( \gl_n/\GL_n\) and  \( \gl_n\ti \CC^n/\GL_n\). We call the second family { \it framed } and the first { \it unframed}.

The total \sptm\ \(S^2\ti\mathbb{R}\)
can have `defect surfaces' separating different models, that is, the connected components of the complement of defect surfaces are labeled by integers $n$. The defects can intersect along curves and curves can intersect at points. In the framed category, the defects have orientation
and in both settings the curves of the intersection of the defects have signs.
%In our \tft the connected component
%of the complement to the defect are labeled by the integers.

The  simplest case of the defect picture is when the defect is equal to \(\tDef=C\ti \mathbb{R}\subset S^2\ti \RR\) where \(C\) is an immersed
curve (possibly with many connected components). In this case we define the partition-function evaluation \(\sfZ^\bullet\) where
\(\bullet\) is \(\emptyset\) in the unframed case and
\(\bullet=\tfrm\)  in the framed case. Our choices in the construction of the partition-function evaluation are driven by the topological applications \cite{OblomkovRozansky16}
of the homomorphism \eqref{eq:MFst}:

\begin{theorem}\label{thm:MF}
  Let \(S^1\ti 0\subset S^2\ti \{0\}\subset S^2\times \RR \) be an embedded circle such that it intersect the defect \(\Def=C\ti \RR\) (\(C\) can have many connected components) transversally at \(2n\) points and the labels of the connected components of \(S^1\setminus S^1\cap \Def\) are \( 0,1,2,\dots,n,n-1,\dots,2,1\), as for example,  in the picture below\footnote{On the picture the red circle is \(S^1\) and the black lines are the defect curve \(C\)}:
  \[
    \begin{tikzpicture}
  \node (A0) at (-2,-1) {};
  \node (B0) at (-2,1)  {};
  \node (A) at (-1,-1) {};
  \node (B) at (-1,1) {};
  \node (A1) at (0,-1) {};
  \node (B1) at (0,1) {};
  \node (A2) at (1,-1) {};
  \node (B2) at (1,1)  {};
  \node (A3) at (2,-1) {};
  \node (B3) at (2,1) {};
  \node (Ln) at (-2.5,0) {\(\scriptstyle{n}\)};
  \node (Lnm1) at (-1.5,0) {\(\scriptstyle{n-1}\)};
  \node (Ldts) at (-0.5,0) {\(\dots\)};
  \node (L3) at (0.5,0) {\(\scriptstyle{2}\)};
  \node (L2) at (1.5,0.3) {\(\scriptstyle{1}\)};
  \node (L5) at (1.5,-0.3) {\(\scriptstyle{1}\)};
  \node (L0) at (2.5,0) {\(\scriptstyle{0}\)};
  \draw [->] (A0) edge (B0);
  \draw [->] (A) edge (B);
  \draw [->] (A1) edge (B1);
  \draw [->] (A2) edge (B3);
  \draw [->] (A3) edge (B2);
  \draw[red] (0,0) ellipse (3 and 0.5);
\end{tikzpicture}\]
 then
  \[\sfZ(S^1)=\MF_{\GL_n}(\gl_n \ti \mathrm{T}^*\Fl\ti \mathrm{T}^*\Fl,W).\]
  Moreover, if the first \(n\) intersection points of \(S^1\cap  \Def\) are oriented up and the rest down, as in the above picture, then
  \begin{equation}
  \label{eq:catzso}
  \sfZf(S^1)=\MF_{\GL_n}\bigl((\gl_n \ti\CC^n \times\mathrm{T}^*\Fl\ti \mathrm{T}^*\Fl)^{\rst},W\bigr).
  \end{equation}
\end{theorem}

Let us remark that in the last statement we evaluated  partition functions on the circle \(S^1\) and the values do not depend on the part of curve \(C\) that is away from 
\(S^1\). On the other hand the values of the partitions function on a disc \(D\subset S^2\times \{0\}\), \(\partial D=S^1\) depends on the part of curve \(C\) that is
inside \(D\). We discuss  values of the partition function on the discs below.

% In our first paper \cite{OblomkovRozansky16} we defined the triply-graded homology of the
% closure \(L(\beta)\) of the braid \(\beta\in\Br_n\) as the hypercohomology:
%\begin{equation}\label{eq:old1}
%  \HHH(\beta)=\mathbb{H}\bigl(\mathcal{E}xt(\Phi(\beta),\Phi(1)\otimes \Lambda^\bullet \calB)\bigr),
%\end{equation}
%where \(\calB\) is the tautological bundle which we explain in the main body of the paper.

Let us use short-hand notation \(\MF^{\st}\) for \(\MF_{\GL_n}\bigl((\gl_n \ti\CC^n \times\mathrm{T}^*\Fl\ti \mathrm{T}^*\Fl)^{\rst},W\bigr).\)
In~\cite{OblomkovRozansky18a} we constructed the pair of adjoint functors:
\begin{equation}
\label{eq:mnchcchf}
  \begin{tikzcd}
    \MF^{\st}\arrow[rr,bend left,"\CH^{\st}_{\loc}"]&&\Dper(\Hilb_n(\CC^2))\arrow[ll,bend left,"\HC^{\st}_{\loc}",pos=0.435]
  \end{tikzcd},
\end{equation}
where \(\Hilb_n(\CC^2)\) is the Hilbert scheme of \(n\) points on \(\CC^2\), while \(\Dper(\Hilb_n(\CC^2))\) is the derived category of
two-periodic \(\Tqt\)-equivariant complexes on the Hilbert scheme. We also showed that
\begin{equation}\label{eq:Hilb}
  \HHH(\beta)=\mathcal{H}om(\CH^{\st}_{\loc}(\beta), \Lambda^\bullet \calB).
  \end{equation}

  Let us recall that \(\Hilb_n(\CC^2)\)  is a manifold that parameterizes ideals \(I\subset \CC[x,y]\) of codimension \(n\). Respectively, \(\calB\) is a rank \(n\) vector bundle
  with a fiber at \(I\) equal to the vector space dual to
  \(\CC[x,y]/I\). The vector bundle \(\calB\) is called {\it tautological } vector bundle.

The \tft\ picture gives a natural interpretation of the isomorphism~\eqref{eq:Hilb} as the result of gluing the same disc in two different ways. The appearance of \(\Hilb_n(\CC^2)\) is due to the following:
%
%In context of the \tft\ the equivalence of two models for the knot homology is very natural. Indeed, let us first
%  explain appearance of \(\Hilb_n(\CC^2)\):

\begin{theorem}\label{thm:DrZ}
  Let \(K=S^1\ti 0\subset S^2\ti \RR \) be an embedded circle which does not  intersect the defect \(\Def=C\ti \RR\) and lies
  in the connected component of the complement of the defect with the label \(n\)
  (see Figure~\ref{fg:circlcut} for \(n=2\) example) then
  \[\sfZf(K)=\Dper(\Hilb_n(\CC^2)).\]
  Moreover, if \(D_\emptyset\) is a disc in the complement of the defect, such that \(\partial D_\emptyset=K\), then
  \[\sfZf(D_\emptyset)=\calO\in \Dper(\Hilb_n(\CC^2)).\]
\end{theorem}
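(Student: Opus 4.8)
The plan is to read off both identities from the cutting-and-gluing axioms of the theory, reducing everything to the monoidal structure carried by the category that the TQFT attaches to a point of the $n$-labeled region. First I would identify that category: a point in the interior of the $n$-region, thickened to a small interval transverse to the (absent) defect, carries the monoidal category
\[
  \mathcal{C}_n \;=\; \MF_{\GL_n}\bigl((\gl_n\ti\CC^n\ti\rmTs\Fl\ti\rmTs\Fl)^{\rst},W\bigr),
\]
which is precisely the framed category $\MF^{\st}$ of \eqref{eq:catzso} (the target of the braid homomorphism \eqref{eq:MFst}), equipped with the convolution (horizontal composition) product that makes it the framed analogue of the Soergel--Hecke category. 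The embedded circle $S^1$ missing the defect is obtained by taking the interval labeled by $\mathcal{C}_n$, viewed as the identity bimodule over itself, and gluing its two ends; by the gluing axiom the resulting invariant is the categorical (horizontal) trace of this identity bimodule. Since the $B$-model data are $2$-periodic and $\mathcal{C}_n$ is rigid monoidal, this trace is computed by the Drinfeld center $Z(\mathcal{C}_n)$, so that $\sfZf(S^1)=Z(\mathcal{C}_n)$.

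Second, I would compute this center along the lines suggested by the TQFT itself: for a gauged $B$-model the value on $S^1$ is the category of $2$-periodic coherent sheaves on the derived loop space (inertia stack) of the target $[(\gl_n\ti\CC^n)/\GL_n]$. The framing factor $\CC^n$ together with the stability condition built into \eqref{eq:catzso} rigidifies this loop space to the smooth Hilbert scheme $\Hilb_n(\CC^2)$, with the residual dilation recorded by the $\Tqt$-action and the $2$-periodicity supplied by the potential $W$. This geometric center agrees with the algebraic one constructed in \cite{OblomkovRozansky18a}, where the adjoint pair \eqref{eq:mnchcchf} already identifies $\Dper(\Hilb_n(\CC^2))$ with the center of $\MF^{\st}$. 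Combining the two yields $\sfZf(S^1)=Z(\mathcal{C}_n)=\Dper(\Hilb_n(\CC^2))$, the first assertion.

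Third, for the disc: a disc $D$ with $\partial D=S^1$ is a cobordism from the empty $1$-manifold to $S^1$, so $\sfZf(D)$ is a distinguished object of $\sfZf(S^1)=\Dper(\Hilb_n(\CC^2))$. In any such theory the bounding disc is the ``cap'' that produces the monoidal unit of the category attached to its boundary circle; the unit of $\Dper(\Hilb_n(\CC^2))$ for its tensor product is the structure sheaf, whence $\sfZf(D)=\calO$.

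The main obstacle is the second step: one must pin down the precise sense in which self-gluing the circle produces the \emph{Drinfeld center} rather than some other Hochschild-type invariant, and then match this with the geometric loop-space computation. Concretely this requires checking that the $\Tqt$-equivariance and the $2$-periodicity on the coherent-sheaf side are exactly the data manufactured by the self-gluing, and -- most delicately -- that the stability truncation in \eqref{eq:catzso} is compatible with the gluing, so that the \emph{smooth} $\Hilb_n(\CC^2)$ emerges in place of the singular or stacky loop space of $[(\gl_n\ti\CC^n)/\GL_n]$. Once this compatibility is established, the disc computation is essentially formal, since the cap functor is determined by its value on the monoidal unit.
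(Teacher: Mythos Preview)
Your proposal has a genuine gap at the very first step: you have misidentified what the TQFT assigns to a point and to a short interval in the $n$-region. In the paper's extended 3d framework a point $\pt_n$ is an \emph{object} $\bfnf$ of the three-category $\thcgf$, and a short interval lying entirely in the $n$-region is the \emph{identity one-morphism}
\[
\rmL_{\tid}^n=\bigl(\rmTs\GLn\times V_n^*,\;W_{\tid}^{\tfrm}\bigr)\in\HHom(\bfnf,\bfnf),
\]
not the monoidal one-category $\MF^{\rst}$. The category $\MF^{\rst}$ of \eqref{eq:catzso} is the value of $\sfZf$ on a \emph{different} circle, namely the one of Theorem~\ref{thm:MF} that crosses the defect $2n$ times; it plays no role in computing $\sfZf(S^1_n)$ for a circle that misses the defect entirely. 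Consequently your identification $\sfZf(S^1)=Z(\mathcal{C}_n)$ with $\mathcal{C}_n=\MF^{\rst}$ is not what the cutting axiom produces.

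The paper's route is more direct. Cutting $S^1_n$ into two arcs gives
\[
\sfZf(S^1_n)=\Hom(\rmL_{\tid}^n,\rmL_{\tid}^n),
\]
a concrete $\GLn\times\GLn$-equivariant matrix factorization category on $\gl_n^{\tfrm}\times(\rmTs\GLn\times V_n^*)^2\times\gl_n^{\tfrm}$. One then slices the second $\GLn$-action by setting $g'=1$ and applies Kn\"orrer periodicity to the resulting quadratic summand of the potential; this identifies $\Hom(\rmL_{\tid}^n,\rmL_{\tid}^n)$ with the Drinfeld-type category $\MF_{\tDr}^{\tfrm}$ of \eqref{eq:drcat}. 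After linearizing the potential, the localization functor $j_G\colon G\hookrightarrow\frg$ is an equivalence in the framed case, and Koszul duality along the linear $\frg$-factor yields $\Dper(\Hilb_n(\CC^2))$. No Drinfeld-center statement for $\MF^{\rst}$ is used; indeed, the paper formulates the relation between $\Hom(\mathbb{I}\mathrm{d},\mathbb{I}\mathrm{d})$ and Drinfeld centers of the various $\oct[\oo]$ only as Conjecture~\ref{conj:Z}, so your second step would rest on an unproved assertion even if the first step were repaired. Your loop-space heuristic is morally in the right direction (the space underlying $\MF_{\tDr}^{\tfrm}$ is a version of the loop space of $[\gl_n^{\tfrm}/\GLn]$), but it is reached in the paper by the Kn\"orrer computation, not via a center. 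Your disc argument, by contrast, matches the paper: $\sfZf(D_\emptyset)$ is declared to be the monoidal unit $\calO$ of $\Dper(\Hilb_n(\CC^2))$, for exactly the reason you give.
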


To explain the appearance of the exterior powers of \(\calB\)
we introduce a special line of defect in our theory: \((0,0)\ti \RR\subset \RR^2\times \RR\subset 
S^2\ti \RR\), \(S^2=\RR^2\cup \infty\).
We assume that this line of defect does not intersect the surface of defect.
For a  small disc \(D_{taut}\) transversally   intersecting this line of defect we have:
\begin{equation}\label{eq:line-def}
\sfZf(D_{taut})=\Lambda^\bullet \calB\in \Dper(\Hilb_n(\CC^2)). \end{equation}
The link homology emerges as the vector space associated to a disc which intersects defect surfaces and a defect line:
%This interpretation of the knot homology is a value of our partition-function:
\begin{theorem}\label{thm:HHH-TQFT}
  Suppose that the curve \(C\subset \RR^2\setminus (0,0)\subset \RR^2\cup \infty=S^2\) in the defect \(\Def=C\times \RR\) is the
  picture of the natural closure of the braid  \(\beta\in \Br_n\), the closure goes around the
  line of defect.
  We also assume the following constraint on labeling of the
  connected components of \(S^2\times \RR\setminus \Def\).
  The connected component that contains the infinite point \(\infty\) is labeled by  \(0\).
  The connected component that contains \((0,0)\) has label \(n\).
  The labels decrease by
  \(1\) as we cross the surface of defect while moving along a  ray that starts at \((0,0)\) and avoids singular points of \(\Def\).
  In the picture below
  \[\begin{tikzpicture}
      \node (nn) at (0,0.3) {\(\scriptstyle{n}\)};
      \node (nnn) at (0,0.7) {\(\scriptstyle{n-1}\)};
      \node (dd) at (0,1.3)  {\(\scriptstyle{\vdots}\)};
      \node (ff) at (0,1.8) {\(\scriptstyle{1}\)};
      \node (fff) at (0,2.2) {\(\scriptstyle{0}\)};
      \draw[->] (0.5,0) arc (0:359:0.5);
      \draw[->] (1,0) arc (0:359:1);
      \draw[->] (1.5,0) arc (0:359:1.5);
      \draw[->] (2,0) arc (0:359:2);
      \fill[white] (-0.3,-0.3) rectangle (-2.2,0.3);
      \fill[green] (0,0) circle (0.1);
      \draw[blue] (-0.3,-0.3) rectangle (-2.2,0.3);
      \node (bb) at (-1.3,0) {\(\scriptstyle{\beta}\)};
    \end{tikzpicture}
  \]
   the green dot represents a cross-section of the defect line.
  Then
  \begin{equation}\label{eq:geo-trace}
    \sfZf(S^2)=\HHH(\beta).\end{equation}
\end{theorem}

%%% my version %%%%%%
%From the \tft\ perspective, the isomorphism~\eqref{eq:Hilb} reflects two ways of cutting a plane $\RR^2$ into two pieces: either by a diameter or by a smaller concentric circle, the category~\eqref{eq:catzso} corresponding to the diameter and
%\(\Dper(\Hilb_n(\CC^2))\) corresponding to the smaller circle.

Thus the  formulas \eqref{eq:old} and \eqref{eq:Hilb} correspond to two ways to present \(S^2\) as a gluing of two
\(D^2\) along their common boundary $S^1$.  The  formula \eqref{eq:old} is given by  cutting along \(S^1\) with \(S^1\) as in theorem~\ref{thm:MF} and
the formula \eqref{eq:Hilb} is given by cutting along \(S^1\) that is boundary of a tubular neighborhood of the line of defect. The pictures \ref{fg:manyred}  and \ref{fg:circlcut} represent examples of these two cuts.

Let us denote by \(\pt_n\) a point in a connected component of \(S^2\times \RR\setminus \Def\) that has label \(n\).
Note that theorem~\ref{thm:DrZ} interprets \(\Dper (\Hilb_n(\CC^2))\) as the category of endomorphisms of the identity functor in
the two category \(\sfZf(\pt_n)\):
\[ \mathbb{H}om(\mathbb{I}d,\mathbb{I}d)=\Dper(\Hilb_n(\CC^2))\]
where \(\mathbb{I}d\) is the identity endomorphism of 2-category \(\sfZf(\pt_n)\).
 Thus it is reasonable to
 call \(\Dper(\Hilb_n(\CC^2))\) the Drinfeld center of 2-category \(\sfZf(\pt_n)\). Since it is not
 common to work with the Drinfeld centers of 2-categories, we spell out the expected property of such a
 center in corollary~\ref{cor:Dr}.

 \subsection{Unified perspective on traces}\label{sec:unified-tr}

 The space \(\HHH(\beta)\), \(\beta\in \Br_n\) is triply graded, two gradings come from \(\Tqt\)-action and the third is from
 the exponent in the exteriour power \(\Lambda^\bullet\calB\). The graded dimension \(\calP(\beta)=\dim_{q,t,a}\HHH(\beta)\) is sometimes
 called {\it super-polynomial} of the closure of \(\beta\),
 \cite{DunfieldGukovRasmussen06}. It is shown in \cite{OblomkovRozansky18c} that \(\calP(\beta)|_{t=-1}\) is equal to the HOMFLYPT polynomial \cite{Jones87}.
 
 Thus the equation \eqref{eq:geo-trace} can be interpreted as a categorification of the Jones-Oceanu trace \cite{Jones87} on \(\bigcup_n \Br_n\).
 Another construction for a categeorification of the Jones-Ocneanu trace The triply graded HOMFLYPT homology was given the papers \cite{KhovanovRozansky08b}, \cite{Khovanov07}. In the last cited paper, Rouquier's realization of
 braids inside Soergel bimodule is used. We denote the homology from the paper \cite{Khovanov07} by \(\HHH_{\mathrm{alg}}(\beta)\), \(\beta\in \Br_n\).

 We have shown in \cite{OblomkovRozansky20} that \(\HHH(\beta)=\HHH_{\mathrm{alg}}(\beta)\)
 for any \(\beta\in \Br_n\). So far, We do not have a physical explanation for this equality but both categorifications of the Jones-Oceanu trace
 fit in our TQFT picture.
 In this section we briefly discuss construction of \(\HHH_{\mathrm{alg}}\) and explain how  we construct \(\HHH_{\mathrm{alg}}\) by a slight modification of
 the TQFT construction for \(\HHH\) from the previous section. We also explain how one can modify the TQFT construction to obtain other interesting traces on
 \(\bigcup_n \Br_n\).

 Let us fix notations for the traces we plan to discuss. Soergel explained \cite{Soergel00} a realization of the braid graphs \(\gamma\in\Br_n^\flat\)
inside the additive category of Soergel bimodules \(\mathbb{S}\mathrm{bim}_n\). Thus the Hochschild homology functor \(\mathrm{HH}_*\) yields a trace on
\(\Br_n^\flat\):
\[\Phi^S: \Br_n\to \mathbb{S}\mathrm{bim}_n,\quad \mathrm{HHH}_{\alg}(\gamma)=\mathrm{HH}_*(\Phi^S(\gamma))=\mathcal{H}om(\Phi^S(\beta),\Phi^S(I)).\]

The last example that we discuss is related to the space of conformal blocks. Let us denote by \(V_\lambda\) and \(L_\lambda\) the Verma module and its irreducible  quotient for quantum group
\(U_q(\mathfrak{sl}_2)\) and \(\lambda\in \CC=\mathfrak{h})^*\). In particular, \(L_1=\CC^2\) is the vector representation of \(U_q(\mathfrak{sl}_2)\) and
\(L_\lambda\) is irreducible for not integer \(\lambda\) and \(L_\lambda=V_\lambda\). The space of conformal on the cylinder is defined as
\[H^\lambda_{1^n,\mu}=\Hom_{U_q(\mathfrak{sl}_2)}(L_\lambda,L_1\otimes\dots\otimes L_1\otimes L_\mu),\]
where we assume that \(\lambda\), \(\mu\) are not integers.

The \(R\)-matrix yields the braid group \(\Br_n\) action on the last space. Let's denote the trace of the action by \(\mathrm{Tr}(\beta)[H^\lambda_{1^n,\mu}]\). The space \(H^\lambda_{1^n,\mu}\) is trivial unless \(|\lambda-\mu|\le n\). On the other hand if \(\lambda-\mu=k\), \(|k|\le n\) then the above mentioned
trace does not depend on the value of \(\lambda\) as long as  \(\lambda\) is not integer.

The vector space \(H^\lambda_{1^n,\mu}\) has two gradings that originate
from the weight grading on \(L_\mu\) and the decomposition of \(L_1^{\otimes n}\) on the irreducible components. The braid action respects these gradings.
We propose a construction of  doubly graded vector space \(\mathcal{T}r(\beta)[H^\lambda_{1^n,\mu}]\) that categorifies the above trace.

Now let us turn to TQFT interpretation of the traces. A pictorial presentation of  the configuration of the defects that participate in the formula \eqref{eq:geo-trace} is the annulus with two boundary circles:
 \begin{equation}\label{pic:circles}
   \begin{tikzpicture}
    \draw (2,2) circle (1.5cm);
    \draw (2,2) circle (1.45cm);
    \draw  (2,2) circle (1.4cm);
    \draw[red,fill=red] (2,2) circle (0.5cm);
    \draw[blue,fill=white] (0.25,1.75) rectangle (0.75,2.25);
    \draw[white, fill=white] (1.5,0) rectangle (2.5,1);
    \node (b) at (0.5,2) {\(\beta\)};
    \node (B) at (2,2)  {\(\mathbb{B}\)};
    \node (NS) at (2,0.6) {\(n \mathrm{NS5}\)};
    \node (2) at (2,4) {\(\mathbf{0}\)};
    \node (z) at  (2,3) {\(\mathbf{n}\)};
    \end{tikzpicture}
  \end{equation}
  here \(\beta\in \Br_n\), the numeric labels indicate the type TQFT in the connected component.

  The picture is topologically equivalent to the pictures in theorem~\eqref{thm:HHH-TQFT} and for \(n=3\) and \(\beta=\sigma_1^3\) to the picture~\ref{fg:circlcut}. Let us explain appearance of labels NS5 in the last picture.
  In the section~\ref{sec:phys} we explain that in a string-theoretic
  presentation of our TQFT the defect \(\Def=C\times \RR\) is an \(S^1\)-reduction of the stack of \(n\) NS5 branes. 
Thus  in the picture we have \(n\) \(\mathrm{NS5}\) defects
following the outer boundary of the annulus and braid themselves into the braid \(\beta\).

The red domain contains the inner circle boundary defect configuration  \(\mathbb{B}\). In the case   treated by the equation \eqref{eq:geo-trace} the boundary configuration is the domain with the line defect
that carries the vector bundle \(\Lambda^\bl \calB\). The line defect is reflected by the equation \eqref{eq:line-def}.

It turns out that by varying the configuration of defects \(\mathbb{B}\) in the center we can categorify several other interesting trace evaluations on the
braid group as well as algebra of the braid graphs \(\Br_n^\flat\)\footnote{The elements of \(\Br_n\) are related to the braid graphs
by the MOY relations \cite{MurakamiOhtsukiYamada98}.}.

The three constructions are
partition-function evaluations  of  defect configuration as in the picture  and the internal disc defect \(\mathbb{B}\) as in the table
\begin{center}
  \begin{tabular}{c|c|c|c}
    \(\mathbb{B}\)& \(\Lambda^\bl V_n\) & \(\mathrm{NS5}^{(n)}\) & \(\mathrm{D5}^{(k)}+\mathrm{D5}^{(n-k)}\)\\
    \hline
    \(\beta\)& \(\Br_n\)&\(\Br_n^\flat\)&\(\Br_n\)\\
\(r\)&\(1\)&\(1\)&\(0\)                                                           \\
\(    \mathsf{Z}\)& \(\mathrm{HHH}(\beta)\)&\(\mathrm{HHH}_{\alg}(\beta)\)&\(\mathcal{T}r(\beta)[H^\lambda_{1^n,\lambda+k}]\)
  \end{tabular}
\end{center}
where \(\mathrm{NS5}^{(k)}\), \(\mathrm{D5}^{(k)}\) are NS5 and D5 defects of charge \(k\) discussed in the section~\ref{sec:ns5d5}.
The first two columns of the table reflect mathematically proven statements and the last column seems to be new mathematical construction.
We explain the mathematical details of the relations in the table in section~\ref{sec:traces}.

In section~\ref{sec:phys} we discuss the physics theories that motivate our key results. In details, we discuss in section~\ref{sec:KWandGW}
Kapustin-Witten theory with Gaiotto-Witten interfaces and give mathematical description of the interfaces in section~\ref{sec:interfaces1}.
In section~\ref{sec:4d-to-3d} we explain a reduction to 3d theory. Finally in sections \ref{sec:strings} and \ref{sec:CS-phys} we explain
a string theoretic perspective and the relation with the Chern-Simons theory, respectively.

In section~\ref{sec:algebraic-model-krs} we recall some basics of the constructions in~\cite{KapustinRozansky10} %and we dedicate
%to that section~\ref{sec:algebraic-model-krs}.
In the section~\ref{sec:cats} we explain how our particular example of \tft\ fits into the setting of
 \cite{KapustinRozansky10}. In section \ref{sec:defects-knot-invar} we
 construct the partition-function \(\sfZf\) and prove the results that we mentioned in the introduction. We also discuss
 the Drinfeld center subtleties in the subsection~\ref{sec:value-closed-curves}.
 %In the final section we discuss further  directions and the possible generalizations of the results in this paper.
 %In particular, we outline first step of the program that
 %would relate our theory to the results of \cite{GorskyNegutRasmussen16}, \cite{GorskyHogancamp17} where Soergel bimodule
 %HOMFLYPT homology is explored from the perspective of the geometry of the flag Hilbert schemes.

{\bf Acknowledgments}
We would like to thank Dmitry Arinkin, Tudor Dimofte, Eugene Gorsky, Sergey Gukov, Tina Kanstrup, Ivan Losev, Roman Bezrukavnikov and Andrei Negu{\c t} for useful discussions.
The authors also extremely grateful to an anonymous referee for many corrections
and important suggestion on the structure of the paper.
The work of A.O. was supported in part by  the NSF CAREER grant DMS-1352398, NSF FRG grant DMS-1760373 and Simons Fellowship.
The work of L.R. was supported in part by  the NSF grant DMS-1760578.

\def\Mf{M^4}
\def\gU{ \mathrm{U}}
\def\BGL{\mathbf{BGL}}
\def\dcell{cell}
\def\dcells{\dcell s}
\def\xobv#1{ (#1) }
\def\cXs{\mathcal{X}}
\def\cX{\mathcal{X}}

\def\cXsv#1#2{ \cXs_{n_{#1},n_{#2}}}

\def\rsm{\mathrm{sym}}

\def\cXsm{ \mathbf{X}^{\rsm}}
\def\cXsmvv#1#2{ \cXsm_{#1,#2}}
\def\cXsmot{ \cXsmvv{1}{2}}
\def\cXsmth{ \cXsmvv{2}{3}}

 \def\Bdd{ \ddot{\mathbf{B}}}

\def\Bddd{ \dddot{\mathbf{B}}}

\def\hmr{ /\!/ }
\def\GLv#1{\GL(#1) }
\def\GLn{ \GLv{n}}
\def\GLnv#1{ \GLv{n_{#1}}}
\def\GLno{ \GLnv{1}}
\def\GLnt{ \GLnv{2}}
\def\md{ m }
\def\KW{Kapustin-Witten}
\def\GW{Gaiotto-Witten}
\def\HW{Hanany-Witten}
\def\Hombv#1{ \mathbf{H}\mathrm{om}\bigl(#1\bigr)}
\def\cA{ \mathcal{A}}
\def\cAv#1#2{ \cA_{#1,#2}}
\def\cAnot{ \cAv{n_1}{n_2}}
\def\cB{ \mathcal{B}}
\def\cBv#1#2{ \cB_{#1,#2}}
\def\cBnot{ \cBv{n_1}{n_2}}
\def\cBnn{ \cBv{n}{n}}
\def\cC{ \mathcal{C} }
\def\cN{ \mathcal{N} }
\def\HmCv#1#2{ \Hom(\IC^{#1},\IC^{#2}) }
\def\gL{generalized Lagrangian}
\def\Lo{Lagrangian object}
\def\Lf{Lagrangian functor}
\def\cL{ \mathcal{L} }
\def\tcL{ \tilde{\cL}}
\def\avert{vertical}
\def\ahor{horizontal}
\def\tcZ{ \widetilde{\cZ}}
\def\tW{ \widetilde{W}}
\def\xcong{  = }
\def\tY{ \tilde{Y}}
\def\xDCoh{\mathrm{DCoh}}
\def\WCS{Witten-Chern-Simons}

\section{Physics Background}\label{sec:phys}
%\subsection{TQFT perspective}

\subsection{The hierarchy of type-B categories}
From the mathematical perspective, we work within a 4-category $\BGL$ which, accoring to Kapustin and Witten~\cite{KapustinWitten07} represents the Galois side of the geometric Langlands duality. Let us review the pyramid of $\BGL$ from the bottom to the top. 

\subsubsection{The category of matrix factorizations}
At the first level lies the category of $G$-equivariant matrix factorizations $\MF^G(\cX;W)$, where $\cX$ is a (affine) variety with an action of an algebraic group $G$, while $W$ is a $G$-invariant function on $\cX$: $W\in\IC[\cX]^G$. The category $\MF^G(\cX;W)$ is the category of boundary conditions of the gauged Landau-Ginzburg B-model with the target space $\cX$ and the \emph{superpotential} $W$.

\subsubsection{The equivariant 2-category of a symplectic variety}
At the second level lies the 2-category $\Bdd^G(\cXsm)$ of a symplectic variety $\cXsm$ with a Hamiltonian action of $G$. If $\cXsm$ is a cotangent bundle
\[\cXsm =\rmTs\cX\] and the Hamiltonian action comes from the action of $G$ on $\cXs$, then an object of $\Bdd^G(\cXsm)$ is a pair $(\cZ;W)$, where $\cZ$ is an \emph{auxiliary} variety with an action of $G$, while $W\in\IC[\cZ\times\cX]^G$ in this context is the \emph{action} describing a Lagrangian submanifold in classical mechanics and $\cZ$ is the source of \emph{auxiliary variables}. Intuitively, the pair $(\cZ;W)$ represents a \emph{\Lo} in the Hamiltonian reduction $\cXsm\hmr G$, that is, a $G$-equivariant fibration $\tcL\rightarrow \cL$ whose base is a Lagrangian subvariety $\cL\subset \cXsm$. Here $\tcL\subset\rmTs(\cZ\times\cX)$ is the graph of $dW$.

The category of morphisms between two \Lo s is the category of $G$-equivariant matrix factorizations
\begin{equation}
\label{eq:tctzw}
\Hom\bigl((\cZ_1;W_1),(\cZ_2;W_2) \bigr) = \MF^G(\cZ_1\times\cZ_2\times\cX;W_2 - W_1).
\end{equation}

For two  varieties $\cXsm_1$ and $\cXsm_2$ a \Lo\ $(\cZ_{12};W_{12})$ in the 2-category $\Bdd^{G_1\times G_2}\bigl(\rmTs(\cX_1\times \cX_2)\bigr)$ determines a \emph{\Lf}
\[
\begin{tikzcd}[column sep=huge]\Bdd^{G_1}(\rmTs\cX_1)\ar[r,"(\cZ_{12};W_{12})"] & \Bdd^{G_2}(\rmTs\cX_2)
\end{tikzcd}
\]
stemming from the Lagrangian correspondence between $\rmTs\cX_1$ and $\rmTs\cX_2$.

The 2-category $\Bdd^G(\cXsm)$ is the category of boundary condition of the gauged 3d B-model with target $\cXsm$.

\subsubsection{The 3-category of $G$-equivariant symplectic varieties}
At the third level is a 3-category $\Bddd(G)$. Its objects are symplectic varieties $\cXsm$ with a Hamiltonian action of $G$. The 2-category of morphisms between two such varieties is the 2-category
of their product:
\[
\Hom(\cXsm_1,\cXsm_2) = \Bdd^G(\cXsm_1 \times \cXsm_2).
\]
Equivalently, a morphism is a (generalized) Lagrangian correspondence, and the morphisms are composed accordingly.
The 3-category $\Bddd(G)$  is the category of the boundary conditions of the \KW\ theory at $t=i$, which is related to the Galois side of the geometric Langlands duality.

\subsubsection{The 4-category of Lie groups of type A}
Finally, at the top of the pyramid is the 4-category $\BGL$ of the groups of type A:
\[\mathrm{Obj}(\BGL)=\mathbb{Z}_{\ge 0}\] that is an 
 object \(n\) corresponds to the group $\GLn$. The morphisms between two groups form the 3-category of their product:
\begin{equation}\label{eq:thrcat-gl-gl}
\mathbf{H}\mathrm{om}\bigl(\GLnv{1},\GLnv{2} \bigr) = \Bddd\bigl(\GLnv{1}\times\GLnv{2}\bigr)
\end{equation}
and the composition of morphisms $\cXsmot$ and $\cXsmth$ is their joint Hamiltonian reduction:
\[
\cXsmth\circ\cXsmot = (\cXsmot\times\cXsmth)\hmr \GLnt.
\]
The 4-category $\BGL$ is the category of \GW\ interfaces in the $t=i$ \KW\ theory.

\subsection{Kapustin-Witten theory with Gaiotto-Witten interfaces}\label{sec:KWandGW}
The 4-category $\BGL$ comes from the TQFT of Kapustin and Witten~\cite{KapustinWitten07} combined with the interfaces of Gaiotto and Witten~\cite{GaiottoWitten09} and inspired by the string theory setup of Hanany and Witten~\cite{HananyWitten97}.

Following~\cite{KapustinWitten07}, we consider a family of 4-dimensional (4d) $N=4$ supersymmetric Yang-Mills (SYM) theories with gauge groups $\gU(n)$, $n=0,1,\ldots$. The space-time is a 4d manifold $\Mf$ containing 3d (possibly mutually intersecting) submanifolds (interfaces) of two types: NS5 and D5. These interfaces separate $\Mf$ into \dcells, each \dcell\ being assigned a particular value of $n$. The interface submanifolds may contain their own 2d interfaces-submanifods, splitting 3d interfaces into 3d \dcells, etc. Topologically, the whole construction is an example of a 4d `foam' (a smooth $CW$ complex) with a particular property that the total ambient space $\Mf$ is just a manifold.

Following~\cite{KapustinWitten07}, we consider the topological version of our SYM theory: we apply the GL-twist and choose the differential corresponding to $t=i$. As explained by Kapustin and Witten, the resulting theory corresponds to the Galois side of the Langlands duality. The gauge groups $\gU(n)$ are effectively complexified to $\GL(n)$.

Mathematically, a TQFT on a 4d manifold with interfaces is equivalent to a 4-category. Each 4d \dcell\ is colored by its object, a \dcell\ of a 3d interface is colored by a morphism between the adjoint \dcells, etc. In order to determine a category whose objects should be assigned to an $\md$-dimensional \dcell, one has to compactify the full theory on its ($3-\md$)-dimensional link. The \dcell\ becomes a boundary of the resulting $(\md+1)$-dimensional TQFT and the category of the \dcells\ emerges as the category of its boundary conditions.

Consider a 3d interface in the \KW\ theory  separating the gauge groups $\GLno$ and $\GLnt$. Consider the \KW\ theory at $t=i$. The link of a 3d interface is two points, and the compactification amounts to folding the 4d space across the interface resulting in a 4d half-space whose boundary is the 3d interface and whose gauge group is $\GLno\times\GLnt$. According to \GW, the boundary condition is described by a symplectic variety $\cXsm$ with the hamiltonian action of $\GLno\times\GLnt$, that is, by an object of $\BGL$.

Suppose that the 3d interface has a 2d interface inside, separating the boundary conditions $\cXsm_1$ and $\cXsm_2$. The link of this interface is a semicirlce ending at the boundary of the 4d halfspace, so compactifications results in a 3d half-space whose theory is the 3d gauged B-model with the target $\cXsm_1\times\cXsm_2$ and the gauge group $\GLno\times\GLnt$. The 2-category of the boundary conditions of this theory is~\eqref{eq:tctzw} as explained (in the non-equvariant case) in~\cite{KapustinSaulinaRozansky09,KapustinRozansky10}. In particular, an object $(\cZ;W)$ describes a combination of a Lagrangian boundary condition with an added 2d B-model at the boundary, while the category of matrix factorizations describing interfaces between two such boundary conditions is the category of boundary conditions of the 2d Landau-Ginzburg B-model emerging after the folding of the 3d halfspace carrying the 3d B-model.

\subsection{Bow (D5) and arrow (NS5) interfaces}\label{sec:interfaces1}
Gaiotto and Witten suggest the version of the 4-category $\BGL$ in which all morphisms (that is, 3d interfaces) are compositions of two types of elementary ones: the NS5 interface and the D5 interface. Recall that any interface in $\Hombv{\GLno,\GLnt}$ is represented by a symplectic variety with the Hamiltonian action of $\GLno\times\GLnt$. The NS5 and D5 varieties also emerge as edges in the Nakajima-Cherkis quiver varieties, the NS5 variety corresponding to an arrow edge and the D5 variety corresponding to a bow edge. Ignoring the stability conditions, these edge-related varieties have a form 
\[
\prod_{(i,j)}\rmTs \HmCv{n_i}{n_j}\hmr\prod_{i} G_i = \rmTs
\Bigl( \,\prod_{(i,j)}\HmCv{n_i}{n_j}\Bigl/\prod_i G_i \Bigr).
\]
In subsequent pictures we mark only the arrows of $\HmCv{n_i}{n_j}$ (ignoring the cotangent arrows) because only these arrows (together with the auxiliary ones marked as dashed) are the variables of the superpotential $W$ in \Lo s $(\cZ;W)$.

Ignoring the stability condition, the arrow variety is
\begin{equation}
    \label{eq:arrvar}
\cAnot = \rmTs\Hom(\IC^{n_1},\IC^{n_2}),
\end{equation}
the action of $\GLno\times\GLnt$ on $\Hom(\IC^{n_1},\IC^{n_2})$ being
\[
(g_1,g_2)\cdot X = g_2\,X g_1^{-1}.
\]
Pictorially,
\[
\begin{tikzpicture}[baseline=0cm]
\draw[ultra thick,-] (-2,0) -- (2,0);
\draw[thick,fill=white] (0,-1.5) circle [radius=0.15];
\draw[thick,fill=white] (0,1.5) circle [radius=0.15];
\draw[thick,<-] (0,1.35) -- (0,-1.35);
\node[right] at (0.1,1.5) {$n_2$};
\node[right] at (0.1,-1.5) {$n_1$};
\node[right] at (2,0) {NS5};
\node[right] at (0,0.75) {$X$};
\end{tikzpicture}
%\qquad X_1\in \Hom(\IC^{n_1},\IC^{n_2}).
\]
where $X\in \Hom(\IC^{n_1},\IC^{n_2})$. % and we marked only one arrow of $\cAnot$.

The description of the bow variety $\cBnot$ is more complicated, but the case of $n_1=n_2=n$ is straightforward:
%and here we are interested mostly in the case of $n_1=n_2=n$. Then
\[
\cBnn=\rmTs\GL(n)\times \rmTs\IC^n,\qquad
(g_1,g_2)\cdot (g,v) = (g_2\,g\,g_1^{-1},g_2\,v)
\]
(one can replace $g_2\,v$ with $g_1\,v$). The role of $\rmTs\GL(n)$ factor is to reduce the gauge group $\GL(n)\times\GL(n)$ down to its diagonal subgroup $\GL(n)$. Pictorially,
\[
\begin{tikzpicture}[baseline=0cm]
\draw[ultra thick,-] (-2,0) -- (2,0);
\draw[thick,fill=white] (0,-1.5) circle [radius=0.15];
\draw[thick,fill=white] (0,1.5) circle [radius=0.15];
\draw[thick,<-] (0,1.35) -- (0,-1.35);
\node[right] at (0.1,1.5) {$n$};
\node[right] at (0.1,-1.5) {$n$};
\node[right] at (2,0) {D5};
\node[right] at (0,0.75) {$g$};
\draw[thick,fill=white] (0.85,-0.15) rectangle ++(0.3,0.3);
\draw[thick,->] (0.1,-1.35) -- (1,-0.2);
\node[right] at (0.5,-0.85) {$v$};
\node[above] at (1,0.15) {$1$};
\node at (4,0) {or};
\draw[ultra thick,-] (6,0) -- (10,0);
\node[right] at (10,0) {D5};
\draw[thick,fill=white] (7,0) circle [radius=0.15];
\draw[thick,fill=white] (8.85,-0.15) rectangle ++(0.3,0.3);
\draw[thick,->] (7.11,0.11) .. controls (7.7,0.6) and (8.3,0.6) .. (8.85,0.15);
\node[above] at (8.0,0.5) {$v$};
\node[below] at (7,-0.2) {$n$};
\node[below] at (9,-0.2) {$1$};
\end{tikzpicture}
%\qquad X_1\in \Hom(\IC^{n_1},\IC^{n_2}).
\]
the second picture corresponding to the diagonal $\GL(n)$ equivariance.
%with $\GL(n)\times\GL(n)$ acting on the $\GL(n)$ factor by left and right multiplications and one of %them (it does not matter which one) acting on $\IC^n$.

With these choices, the composition of elementary morphisms in the category $\BGL$ produces a Nakajima quiver variety of type $A$:
\begin{equation}
    \label{eq:quivo}
\begin{tikzpicture}
\draw[ultra thick,-] (-2,-1) -- (-2,2) (0,-1) -- (0,2) (2,-1) -- (2,2);
\draw[thick,fill=white] (0,0) circle [radius=0.15];
\draw[thick,fill=white] (-0.15,1.35) rectangle ++(0.3,0.3);
\draw[thick,fill=white] (-4,0) circle [radius=0.15];
\draw[thick,fill=white] (4,0) circle [radius=0.15];
\node[above] at (-2,2) {NS5};
\node[above] at (2,2) {NS5};
\node[above] at (0,2) {$r$ D5};
\draw[thick,->] (-0.12,0.12) .. controls (-0.4,0.5) and (-0.4,1) .. (-0.15,1.35);
\node [left] at (-0.35,0.75) {$v$};
\draw[thick,->] (-3.85,0) -- (-0.15,0);
\draw[thick,->] (0.15,0) -- (3.85,0);
\node[below] at (-4,-0.17) {$n_1$};
\node[below] at (-0.3,-0.17) {$n_2$};
\node[below] at (4,-0.17) {$n_3$};
\node[above] at (-3,0) {$X_1$};
\node[above] at (3,0) {$X_2$};
\end{tikzpicture}
\end{equation}

\subsection{From 4d to 3d}\label{sec:4d-to-3d}
\subsubsection{Compactification}\label{sec:compact}

We consider the \KW\ theory on the 4-manifold
\begin{equation}
\label{eq:mfour}
M^4 = \IRz\times\xSg\times \Soh,
\end{equation}
%\IRvv{3}{012}
where $\xSg$ is a 2-dimensional surface, $\IRvv{k}{i_1\cdots i_k}$ denotes $\IR^k$ with coordinates $x_{i_1},\ldots,x_{i_k}$ and $S^1_i$ denotes the circle $S^1$ with the periodic coordinate $x_i$. In fact, we will be mostly interested in the case of $\xSg = \IRvv{2}{12}$, which is a local form of a general $\xSg$.

%We consider the KW theory in the presence of  NS5 and D5-interfaces introduced by Gaiotto and %Witten~\cite{GaiottoWitten09}.
%The interfaces split $M^4$ into separate domains, and each domain has its own value of $\xn$.

We place simultaneously two arrangements of parallel elementary NS5 and D5 interfaces  in $M^4$.
The first \emph{\ahor} arrangement consists of parallel NS5 and D5-interfaces, each interface sweeping a 3-dimensional subspace
$
%\IRvv{3}{012}
\IRz\times \xSg\times\{s_I\}$, where $s_I\in \Soh$ is a point on the circle corresponding to $x_3 = s_I$. We assume that the values of $\xn$ are the same on both sides of each D5-interface, but they can vary across NS5-interfaces.

An interface $\xInt$ of the second \emph{\avert} arrangement has the form $\IRvv{1}{0}\times\xcraI\times\Soh$, where $\xcraI\subset \xSg$ is a curve. The curves of different additional interfaces may intersect.

%\subsubsection{3-dimensional B-models}

Fix a point $\brt\in \IRz$ and a curve $\ctcr\subset \xSg$ possibly intersecting the curves $\xcraI$ of additional interfaces. Consider a 2d submanifold $\cC$ and a 3d submanifold $\cN$ of $M^4$:
\[
\cC = \{\brt\}\times \ctcr \times \Soh,\qquad \cN = \{\brt\}\times \xSg \times \Soh.
\]
We want to describe the category of $\cC$ and the vector space (that is, the Hilbert space) of $\cN$. Since both manifolds have $\Soh$ as a factor, we can first compactify $M^4$ on $\Soh$, thus reducing it to
$M^3 = \IRz\times\xSg$, which is split by \avert\ interfaces into domains. Each domain of $M^3$ carries a 3d topological `Coulomb-twisted' $N=4$ SYM theory with matter fields.

The B-twisted Higgs branch of this theory 
%yields a B-model whose target 
is the Nakajima quiver variety $\cM_{\bqn,\bqr}(\tAm)$ whose affine quiver $\tAm$ and the dimensions at circles $\bqn$ and at framing boxes $\bqr$
%the affine $\tAm^{\bqr,\bqn}$ Nakajima quiver variety, whose quiver 
are determined by the positions of the \ahor\ arrangement interfaces on $\Soh$ and the gauge groups $\GL(n_i)$ between them in accordance with the picture~\eqref{eq:quivo}:
$\nsm$ is the number of NS5-interfaces in the basic arrangement, the dimensions of the circles are $\bqn=(n_1,\ldots,n_{\nsm})$ and the dimensions of the framing boxes $\bqr=(r_1,\ldots,r_{\nsm})$ equal the numbers of D5 interfaces between each pair of adjacent NS5 interfaces. If one of the numbers $n_i$ is zero, then the affine quiver $\tAm$ becomes an $A_{m-1}$ quiver.

Let $Q$ be either the $\tAm$ or the $A_{m-1}$ quiver. We denote by $\xQbr$ the assignment of dimensions $\bqr$ to the framing boxes and by $\xQbrn$ the additional assignment of dimensions $\bqn$ to the circles.

In mathematical language the compactification on a circle $\Soh$ intersecting the \avert\ brane arrangement means that we reduce the 4-category $\BGL$ to the 3-category $\hCat(\xQbr)$.
%, where $Q$ is either the $\tAm$ or the $A_{m-1}$ quiver with framing box dimensions $\bqr=(r_1,\ldots,r_m)$. 
An object of $\hCat(\xQbr)$ is $\xQbrn$ and the 2-category of morphisms is the subcategory
\[\mathbb{H}\mathrm{om}\bigl(\xQbrn,\xQbrnp\bigr)\subset\Bdd\bigl( \cM_{\bqn,\bqr}\times\cM_{\bqn',\bqr}\bigr)\] formed by \Lf s corresponding to the NS5 branes of the \avert\ arrangement and described in subsection~\ref{sbs:lfs}.

\subsubsection{Elementary intersections}
From the 3d TQFT perspective, the original 3d \avert\ interfaces become 2d interfaces between quiver-related gauged 3d B-models. Hence a \avert\ interface is a \Lf\ between two $\tAm$ quiver varieties with different dimensions at circles. This \Lf\ is a combination of \Lf s assigned to the intersections of the \avert\ interface with the \ahor\ interfaces.

\def\cZot{ \cZ_{12}}
\def\cWot{ W_{12}}
\def\rmav{\mathrm{vert}}
\def\rmah{\mathrm{hor}}
There are two types of the vertical NS5 branes
that appear in our discussion, see section~\ref{sec:single-brane} for a string-theoretic interpretation of these two types of NS5 branes. The first type of branes we denote \(NS5\), in the language of section~\ref{sec:single-brane} these are
branes that wrap cigar \(\RR^2_{\dot{3}\dot{7}}\). A vertical NS5
brane of the second type wraps the cigar \(\RR^2_{\dot{8}\dot{9}}\) and we use notation
\(NS5'\) for such branes.

The intersection of  a vertical NS5 brane with a horizontal NS5 brane  results in a 2d defect that is an object in the 2-category $\Bdd$ of the $\tA_3$ quiver.
This 2-category has two simplest objects: $(\cZ;W)$ and its Legendre dual   $(\tcZ;\tW)$
which correspond to the intersection of a horizontal NS5 brane with a vertical \(NS5\)
and \(NS5'\), respectively:
%
%We consider two objects, the second one being the Legendre transform of the first one with respect to %$\rmTs\bigl(\Hom(\IC^{n_1},\IC^{n_2})\times \Hom(\IC^{n_1'},\IC^{n_2'}) \bigr)$:
\begin{equation}\label{eq:crcrc}
\begin{tikzpicture}
\draw[ultra thick,-] (0,2) -- (0,-2);
\draw[ultra thick,-] (-3,0) -- (3,0);
\draw[thick,fill=white] (-2,1.5) circle [radius=0.15];
\draw[thick,fill=white] (-2,-1.5) circle [radius=0.15];
\draw[thick,fill=white] (2,1.5) circle [radius=0.15];
\draw[thick,fill=white] (2,-1.5) circle [radius=0.15];
\node[right] at (3,0) {$\mathrm{NS5}_{\mathrm{\rmah}}$};
\node[above] at (0,2.2) {$\mathrm{NS5}_{\mathrm{\rmav}}$};
\draw[thick,->] (-1.85,1.5) -- (1.85,1.5);
\draw[thick,->] (-1.85,-1.5) -- (1.85,-1.5);
\draw[very thick,->] (-2,-1.35) -- (-2,1.35);
\draw[very thick,->] (2,-1.35) -- (2,1.35);
\node[left] at (-2.2,-1.5) {$n_1$};
\node[right] at (2.2,-1.5) {$n_1'$};
\node[left] at (-2.2,1.5) {$n_2$};
\node[right] at (2.2,1.5) {$n_2'$};
\node[left] at (-2.1,0.75) {$X$};
\node[right] at (2.1,0.75) {$X'$};
\node[above] at (1,1.5) {$\varphi_2$};
\node[below] at (1,-1.5) {$\varphi_1$};
\draw[thick,dashed,->] (1.9,1.4) .. controls (0,0.5) and (-0.7,0) .. (-1.9,-1.4);
%\node[below] at (-0.8,-0.55) {$\psi_1$};
\node[above, left] at (-0.3,0.3) {$\psi$};
\end{tikzpicture}
\qquad
\begin{tikzpicture}
\draw[ultra thick,-] (0,2) -- (0,-2);
\draw[ultra thick,-] (-3,0) -- (3,0);
\draw[thick,fill=white] (-2,1.5) circle [radius=0.15];
\draw[thick,fill=white] (-2,-1.5) circle [radius=0.15];
\draw[thick,fill=white] (2,1.5) circle [radius=0.15];
\draw[thick,fill=white] (2,-1.5) circle [radius=0.15];
\node[right] at (3,0) {$\mathrm{NS5}_{\mathrm{hor}}$};
\node[above] at (0,2.2) {$\mathrm{NS5'}_{\mathrm{vert}}$};
\draw[thick,->] (-1.85,1.5) -- (1.85,1.5);
\draw[thick,->] (-1.85,-1.5) -- (1.85,-1.5);
\draw[very thick,->] (-2,-1.35) -- (-2,1.35);
\draw[very thick,->] (2,-1.35) -- (2,1.35);
\node[left] at (-2.2,-1.5) {$n_1$};
\node[right] at (2.2,-1.5) {$n_1'$};
\node[left] at (-2.2,1.5) {$n_2$};
\node[right] at (2.2,1.5) {$n_2'$};
\node[left] at (-2.1,0.75) {$X$};
\node[right] at (2.1,0.75) {$X'$};
\node[above] at (1,1.5) {$\varphi_2$};
\node[below] at (1,-1.5) {$\varphi_1$};
\draw[thick,dashed,<-] (-1.9,1.4) .. controls (0,0.5) and (0.7,0) .. (1.9,-1.4);
%\node[below] at (-0.8,-0.55) {$\psi_1$};
\node[above, left] at (-0.3,0.3) {$\psi$};
\draw[thick,dashed,->] (-1.9,1.4) .. controls (-1.5,0.5) and (-1.5,-0.5) .. (-1.9,-1.4);
\draw[thick,dashed,->] (1.9,1.4) .. controls (1.5,0.5) and (1.5,-0.5) .. (1.9,-1.4);
\node[right] at (-1.7,-1) {$\tX$};
\node[left] at (1.7,1) {$\tX'$};
\end{tikzpicture}
\end{equation}
The auxiliary variety and the superpotential of the first object are
%Both objects are pairs $(\cZ;W)$, the points of $\cZ$ being dashed arrows. For the first object
\[
\cZ = \Hom(\IC^{n_2'},\IC^{n_1}),\qquad
W = \Tr\psi(X'\varphi_1-\varphi_2X).
\]
% Since swapping the indices 1 and 2 results in reversing the direction of the arrows $X$ and $X'$, we apply the Legendre transform to the swapped version of $(\cZ;W)$ in order to record the second object relative to $X$ and $X'$.
The Legendre dual object has  auxiliary variety 
\[
\tcZ = \cZ\times\HmCv{n_2}{n_1}\times\HmCv{n_2'}{n_1'},
\]
while the superpotential becomes
\[
\tW =  W 
%\Tr \,\psi'(\tX'\varphi_2 - \varphi_1 \tX_1) 
- \Tr\tX X - \Tr\tX' X'.
\]

%while for the second object
%\begin{align*}
%\cZ & = \Hom(\IC^{n_1'},\IC^{n_2})\times\HmCv{n_2}{n_1}\times\HmCv{n_2'}{n_1'},
%\\
%W & = \Tr \psi'(\tX'\varphi_2 - \varphi_1 \tX_1) - \Tr\tX X - \Tr\tX' X'.
%\end{align*}

The intersection of an NS5 interface with a stack of $r$ D5 interfaces results in a 2d defect from the category of the quiver $A_2$ with the common framing space, the corresponding object being $(\cZ;W)$:
\begin{equation}\label{eq:crbx}
\begin{tikzpicture}
\draw[ultra thick,-] (0,2.5) -- (0,-1.5);
\draw[ultra thick,-] (-3,0) -- (3,0);
\draw[thick,fill=white] (-2,0) circle [radius=0.15];
\draw[thick,fill=white] (2,0) circle [radius=0.15];
\draw[thick,fill=white] (-0.15,-0.15) rectangle ++(0.3,0.3);
\draw[thick,->] (-1.9,0.1) .. controls (-1.4,0.7) and (-0.6,0.7) .. (-0.15,0.15);
\draw[thick,->] (1.9,0.1) .. controls (1.4,0.7) and (0.6,0.7) .. (0.15,0.15);
\draw[thick,->] (-1.9,-0.1) .. controls (-1,-1.5) and (1,-1.5) .. (1.9,-0.1);
\draw[thick,dashed,<-] (-1.9,0.1) .. controls (-1.35,2) and (-0.65,2) .. (0,0.15);
\node[above] at (0,2.5) {NS5};
\node[right] at (3,0) {$r$D5};
\node[below] at (1.2,-0.7) {$\varphi$};
\node[above] at (1,0.5) {$v_2$};
\node[above] at (-1,0.5) {$v_1$};
\node[above] at (-1.2,1.6) {$\xfph$};
\node[below right] at (0.15,-0.15) {$r$};
\node[below] at (-2,-0.15) {$n_1$};
\node[below] at (2,-0.15) {$n_2$};
\end{tikzpicture}
\qquad
%\text{or}
\qquad
\begin{tikzpicture}[scale=0.75]
\draw[thick, fill=white] (-2,-2) circle [radius = 0.15];
\draw[thick, fill=white] (2,-2) circle [radius = 0.15];
\draw[thick, fill=white] (-2.15,1.85) rectangle ++(0.3,0.3);
\draw[thick, fill=white] (1.85,1.85) rectangle ++(0.3,0.3);
\draw[thick,->] (-1.85,-2) -- (1.85,-2);
\draw[very thick,<->] (-1.85,2) -- (1.85,2);
\draw[thick,->] (-2,-1.85) -- (-2,1.85);
\draw[thick,->] (2,-1.85) -- (2,1.85);
\node[above] at (0,2) {$\xcong$};
\draw[thick,dashed,->] (1.85,1.85) -- (-1.90,-1.90);
\node[left] at (-2,0) {$v_1$};
\node[right] at (2,0) {$v_2$};
\node[below] at (-2,-2.15) {$n_1$};
\node[below] at (2,-2.15) {$n_2$};
\node[above] at (-2,2.15) {$r$};
\node[above] at (2,2.15) {$r$};
\node[above] at (0,-2) {$\varphi$};
\node[left] at (-0.05,0) {$\xfph$};
\end{tikzpicture}
\end{equation}
the second picture omitting the branes and being more quiver-style,
\[
\cZ = \HmCv{n_2}{n_1},\qquad
W = \Tr\,\xfph(v_2\,\varphi-v_1).
\]
Note that the spaces $\IC^r$ in the second picture of~\eqref{eq:crbx} are canonically identical.

\subsubsection{A Lagrangian correspondence between two quiver varieties}
\label{sbs:lfs}
Now we can describe a (formerly \avert) interface between two $\tAm$ quiver related domains in the 3d TQFT. We assume that both $\tAm$ quivers share the same dimensions at boxes while having generally different dimensions at circles. 

In order to assign a pair $(\cZ;W)$ to the interface, we put both quivers side by side. For each pair of matching circle to circle edges we put one of two squares~\eqref{eq:crcrc} and for each pair of matching circle to box edges we put a square~\eqref{eq:crbx}. Here is a portion of the resulting diagram with two such squares:
\begin{equation}\label{pic:vert-hor-squares}
\begin{tikzpicture}
\draw[thick, fill=white] (0,0) circle [radius = 0.15];
\draw[thick, fill=white] (3,0) circle [radius = 0.15];
\draw[thick, fill=white] (2,1.5) circle [radius = 0.15];
\draw[thick, fill=white] (5,1.5) circle [radius = 0.15];
\draw[thick,fill=white] (-0.15,2.85) rectangle ++(0.3,0.3);
\draw[thick,fill=white] (1.85,4.35) rectangle ++(0.3,0.3);
\draw[thick,->] (0.15,0) -- (2.85,0);
\draw[thick,->] (2.15,1.5) -- (4.85,1.5);
\draw[thick,->] (0,0.15) -- (0,2.85);
\draw[thick,->] (2,1.65) -- (2,4.35);
\draw[very thick,<->] (0.15,3) -- (1.85,4.5);
\draw[thick,dashed,->] (1.9,1.4) -- (0.1,0.1);
\draw[thick,dashed,->] (4.9,1.4) -- (3.1,0.1);
\draw[thick,dashed,->] (0.15,2.85) -- (1.9,1.6);
\draw[thick,dashed,->] (2.1,1.4) -- (2.9,0.1);
\draw[very thick,dotted,-] (-0.15,0) -- (-0.75,0);
\draw[very thick,dotted,-] (1.85,1.5) -- (1.25,1.5);
\draw[very thick,dotted,-] (3.15,0) -- (3.75,0);
\draw[very thick,dotted,-] (5.15,1.5) -- (5.75,1.5);
\draw[very thick,dotted,-] (3,0.15) -- (3,0.75);
\draw[very thick,dotted,-] (5,1.65) -- (5,2.25);
\node[below] at (0,-0.15) {$n_i'$};
\node[below] at (3,-0.15) {$n_{i+1}'$};
\node[above right] at (2.1,1.6) {$n_i$};
\node[above right] at (5.1,1.6) {$n_{i+1}$};
\node[above] at (0,3.15) {$r_i$};
\node[above] at (2,4.65) {$r_i$};
\node[below] at (1.5,0) {$X_i'$};
\node[above] at (3.5,1.5) {$X_i$};
\node[above left] at (1,3.75) {$\xcong$};
\node [above left] at (1,0.75) {$\varphi_i$};
\node [below right] at (4,0.75) {$\varphi_{i+1}$};
\node[left] at (0,1.5) {$v_i'$};
\node[right] at (2,3) {$v_i$};
\node[above right] at (1,2.25) {$\xfph_i$};
\node[below left] at (2.6,0.85) {$\psi_i$};
\end{tikzpicture}
\end{equation}
The auxiliary variety $\cZ$ is formed by all dashed arrows: $\varphi_i$, $\psi_i$ and $\xfph_i$ as well as $\tX_i$ and $\tX_i'$, while the superpotential $W$ is the sum of all individual superpotentials.

\subsection{Examples of inter-quiver Lagrangian correspondences}
\subsubsection{NS5 interfaces between Grassmannians}
\label{sec:Grass-interf}
This example was communicated to us by Tudor Dimofte and it represents an interface between two $A_1$ quivers, obtained from $\tA_1$ quivers by setting $\qn_2=\qr_2=0$.
If we impose
the Grassmannian stability conditions requiring that $\qvc$ and $\qvc'$ have highest ranks ($\qn$ and $\qn'$), then the corresponding quiver varieties are
%Consider a pair of
cotangent bundles $\rmTs\Grv{\qn,\qr}$ and $\rmTs\Grv{\qn',\qr}$. If $\qn\leq \qn'$, then the first diagram of~\eqref{eq:dgsq2} yields a well-known \Lf:
\[
\begin{tikzpicture}[baseline=1.5cm]
\draw[thick,->] (0,0.15) -- (0,2.85);
\draw[thick,->] (3,0.15) -- (3,2.85);
\node[left] at (0,1.5) {$\qvc$};
\node[right] at (3,1.5) {$\qvc'$};
\draw[dashed, ->] (0.15,0) -- (2.85,0);
\draw[very thick, <->] (0.15,3) -- (2.85,3);
\node[below] at (1.5,0) {$\xphi$};
\node[above] at (1.5,3) {$\xcong$}; %{$\yphi$};
\draw[dashed, ->] (2.85,2.85) -- (0.15,0.15);
\node[above] at (1.4,1.5) {$\xfph$};
\draw[thick,fill=white] (0,0) circle [radius=0.15];
\draw[thick,fill=white] (3,0) circle [radius=0.15];
\draw[thick,fill=white] (-0.15,2.85) rectangle ++(0.3,0.3);
\draw[thick,fill=white] (2.85,2.85) rectangle ++(0.3,0.3);
%\draw[thick,fill=white] (0,3) circle [radius=0.15];
%\draw[thick,fill=white] (3,3) circle [radius=0.15];
\node[left] at (-0.15,0) {$\qn$};
\node[left] at (-0.15,3) {$\qr$};
\node[right] at (3.15,0) {$\qn'$};
\node[right] at (3.15,3) {$\qr$};
\end{tikzpicture},\qquad
\xW = \Tr\, \xfph(\qvc'\xphi -   \qvc).
\]
whose Lagrangian correspondence is the conormal bundle to the subvariety of $\Grv{\qn,\qr}\times\Grv{\qn',\qr}$ determined by the condition 
\begin{equation}
    \label{eq:cnnin}
    \IC^{n}\subset\IC^{n'}\subset\IC^{r}.
\end{equation}
%The Grassmannian stability conditions require that $\qvc$ and $\qvc'$ have highest ranks ($\qn$ and %$\qn'$). 
%, and we impose an extra condition that $\xphi$ also has the highest rank ($\qnp$).
%
Indeed, the criticality condition for $\xW$ with respect to $\xfph$ implies the commutativity of the vertical and horizontal arrows, so $v = v'\varphi$ and  subdiagram of the arrows $\varphi$ and $v'$
%\[
%\begin{tikzpicture}
%\draw[->] (0.15,0) -- (2.85,0);
%\draw[very thick,->] (3.15,0) -- (5.85,0);
%\node[below] at (0,-0.15) {$\qn$};
%\node[below] at (3,-0.15) {$\qn'$};
%\node[below] at (6,-0.15) {$\qr$};
%\node[below] at (1.5,0) {$\xphi$};
%\node[below] at (4.5,0) {$\qvc'$};
%\node at (3,1.5) {$\yphi\qvc$};
%%\draw[very thick,->] (0.15,0.15) to [out=30,in=180] (3,1.15);
%\draw[very thick,->] (0.15,0.15) to [out=30,in=150] (5.85,0.15);
%\draw[thick,fill=white] (0,0) circle [radius=0.15];
%\draw[thick,fill=white] (3,0) circle [radius=0.15];
%\draw[thick,fill=white] (5.85,-0.15) rectangle ++(0.3,0.3);
%\end{tikzpicture}
%\]
determines the partial flag formed by the subspaces of both Grassmannians, as stated in~\eqref{eq:cnnin}. $\IC^{\qn}\subset\IC^{\qn'}\subset\IC^{\qr}$. In addition, the criticality condition with respect to $\xphi$ guarantees that the map $\xfph$ represents the vector of the conormal bundle to the condition~\eqref{eq:cnnin}.

%maps $\xfph \yphi$ and $\xphi\xfph$ represent the vectors in their cotangent spaces.

The superpotential and the
space of maps \(\varphi,
%\varphi^{\mathrm{fr}}
\) realizes NS5 defect of charge \(n-n'\),
\(\mathrm{NS5}^{(n-n')}\)
in three category \(\thc(A_0)\). We use the super-index  in the notation of
NS5 branes to indicate a difference of labels on two sides of the defect:
the difference of labels on two sides of  \(\mathrm{NS5}^{(k)}\)  is \(k\).

%, and the criticality condition with respect to $\xphi$ ($
%
%
%\[
%\begin{tikzpicture}[baseline=-0.15cm]
%\draw[very thick,->] (0,0.15) -- (0,3.85);
%\draw[very thick,->] (-2,2.15) -- (-2,5.85);
%\draw[very thick,->] (4,0.15) -- (4,3.85);
%\draw[very thick,->] (2,2.15) -- (2,5.85);
%\draw[thick,fill=white]  (0,0) circle [radius=0.15];
%\draw[thick,fill=white]  (4,0) circle [radius=0.15];
%\draw[thick,fill=white]  (-2,2) circle [radius=0.15];
%\draw[thick,fill=white]  (2,2) circle [radius=0.15];
%\draw[thick,fill=white] (-0.15, 3.85) rectangle ++(0.3,0.3);
%\draw[thick,fill=white] (-2.15, 5.85) rectangle ++(0.3,0.3);
%\draw[thick,fill=white] (3.85, 3.85) rectangle ++(0.3,0.3);
%\draw[thick,fill=white] (1.85, 5.85) rectangle ++(0.3,0.3);
%%
%\draw[->] (-0.15,3.85) -- (-1.85,2.15);
%\end{tikzpicture}
%\]

\subsubsection{NS5  interfaces between instanton moduli spaces}\label{sec:ns5-inst}

Now we consider the interfaces between two $\tAz$ quivers with $\qn'=\qn+k$ and $\qr=\qr'$ which yield the functors $\xPhiwk$ and $\xPhibk$ of \eqref{eq:ns5sq}.
%, that is, between the moduli spaces of $\qn$ and $\qn'$ $\rU(\qr)$-instantons on $\IC^2$.
The NS5 interfaces \(\mathrm{NS5}^{(n-n')}\) of charge \(n-n'\) are described by diagrams
\begin{equation}
\label{eq:instcr}
\begin{tikzpicture}[baseline=1.5cm]
\draw[thick,->] (0,0.15) -- (0,2.85);
\draw[thick,->] (3,0.15) -- (3,2.85);
\node[left] at (0,1.5) {$\qvc$};
\node[right] at (3,1.5) {$\qvc'$};
\draw[dashed,->] (0.15,0) to [out=-15,in=195] (2.85,0);
\draw[dashed,<-] (0.15,0) to [out=15,in=165] (2.85,0);
\draw[very thick,<->] (0.15,3) -- (2.85,3);
\node[below] at (1.5,-0.2) {$\xphi$};
\node[above] at (1.5, 0.2) {$\xpsi$};
\node[above] at (1.5,3) {$\xcong$};%{$\yphi$};
\draw[dashed,->] (2.85,2.85) -- (0.15,0.15);
\node[above] at (1.4,1.5) {$\xfph$};
%\draw[very thick,->] (-0.15,-0.15) to [out=240,in=90] (-1,-1);
\draw[thick,-] (-0.15,-0.15) to [out=200,in=180] (0,-1.2);
\draw[thick,<-] (0.15,-0.15) to [out=-20,in=0] (0,-1.2);
\node[below] at (0,-1.2) {$X$};
\draw[ thick,-] (2.85,-0.15) to [out=200,in=180] (3,-1.2);
\draw[thick,<-] (3.15,-0.15) to [out=-20,in=0] (3,-1.2);
\node[below] at (3,-1.2) {$X'$};
\draw[thick,fill=white] (0,0) circle [radius=0.15];
\draw[thick,fill=white] (3,0) circle [radius=0.15];
\draw[thick,fill=white] (-0.15,2.85) rectangle ++(0.3,0.3);
\draw[thick,fill=white] (2.85,2.85) rectangle ++(0.3,0.3);
%\draw[thick,fill=white] (0,3) circle [radius=0.15];
%\draw[thick,fill=white] (3,3) circle [radius=0.15];
\node[left] at (-0.15,0) {$\qn$};
\node[left] at (-0.15,3) {$\qr$};
\node[right] at (3.25,0) {$\qn'$};
\node[right] at (3.15,3) {$\qr$};
\end{tikzpicture}\qquad\qquad
\begin{tikzpicture}[baseline=1.5cm]
\draw[thick,->] (0,0.15) -- (0,2.85);
\draw[thick,->] (3,0.15) -- (3,2.85);
\node[left] at (0,1.5) {$\qvc$};
\node[right] at (3,1.5) {$\qvc'$};
\draw[->,dashed] (0.15,0) to [out=-15,in=195] (2.85,0);
\draw[<-,dashed] (0.15,0) to [out=15,in=165] (2.85,0);
\draw[<->,very thick] (0.15,3) -- (2.85,3);
\node[below] at (1.5,-0.2) {$\xphi$};
\node[above] at (1.5, 0.2) {$\xpsi$};
\node[above] at (1.5,3) {$\xcong$}; % {$\yphi$};
\draw[->,dashed] (2.85,2.85) -- (0.15,0.15);
\node[above] at (1.4,1.5) {$\xfph$};
%\draw[very thick,->] (-0.15,-0.15) to [out=240,in=90] (-1,-1);
\draw[thick,-] (-0.15,-0.15) to [out=200,in=180] (0,-1.2);
\draw[thick,<-] (0.15,-0.15) to [out=-20,in=0] (0,-1.2);
\node[below] at (0,-1.2) {$X$};
\draw[<-,dashed] (-0.15,-0.15) to [out=200,in=180] (0,-2);
\draw[-,dashed] (0.15,-0.15) to [out=-20,in=0] (0,-2);
\node[below] at (0,-2) {$\tY$};
\draw[thick,-] (2.85,-0.15) to [out=200,in=180] (3,-1.2);
\draw[thick,<-] (3.15,-0.15) to [out=-20,in=0] (3,-1.2);
\node[below] at (3,-1.2) {$X'$};
\draw[<-,dashed] (2.85,-0.15) to [out=200,in=180] (3,-2);
\draw[-,dashed] (3.15,-0.15) to [out=-20,in=0] (3,-2);
\node[below] at (3,-2) {$\tY'$};
\draw[thick,fill=white] (0,0) circle [radius=0.15];
\draw[thick,fill=white] (3,0) circle [radius=0.15];
\draw[thick,fill=white] (-0.15,2.85) rectangle ++(0.3,0.3);
\draw[thick,fill=white] (2.85,2.85) rectangle ++(0.3,0.3);
%
%\draw[thick,fill=white] (0,3) circle [radius=0.15];
%\draw[thick,fill=white] (3,3) circle [radius=0.15];
\node[left] at (-0.15,0) {$\qn$};
\node[left] at (-0.15,3) {$\qr$};
\node[right] at (3.25,0) {$\qn'$};
\node[right] at (3.15,3) {$\qr$};
\end{tikzpicture},\qquad
\end{equation}
and the superpotentials are
\[
\xW_1 = \xW(X,X') + \xWfr,\qquad \xW_2 = \Tr( X\tY) + \Tr( X'\tY') - \xW(\tY,\tY') + \xWfr,
\]
where
\[
\xW(X,X') = \Tr ( X'\xphi\xpsi) - \Tr ( X \xpsi\xphi) ,\qquad \xWfr = 
%\Tr (\xfph\qvc'\xphi) -\Tr(\xfph\yphi \qvc).
\Tr\,\xfph (\qvc'\xphi-\qvc).
\]
Note that the second \Lo\ is the Legendre transform of the first one with respect to the loop arrows $X,X'$.

\subsubsection{NS5 and D5 interfaces between commuting varieties}\label{sec:ns5d5}
The following discussion is parallel to that of~\cite{RimanyiRozansky21}.

The commuting variety $\cM_n = \rmTs\gln\hmr\GLn$ is the instanton moduli space for $\qr=0$, the quiver diagram being a single loop:
%The simplest oriented quiver of the $\tAz$ type corresponds to $\qr=0$ and, hence, consists of a single %circle and a loop:
\begin{equation}
    \label{eq:cmqv}
\begin{tikzpicture}
\draw[thick,fill=white] (0,0) circle [radius=0.15];
\node[above] at (0,0.15) {$\qn$};
\draw[thick,-] (-0.15,-0.15) to [out=200,in=180] (0,-1.2);
\draw[thick,<-] (0.15,-0.15) to [out=-20,in=0] (0,-1.2);
\node[below] at (0,-1.2) {$X$};
\end{tikzpicture}
\end{equation}
For the family of these quivers, in addition to  NS5 \Lf s we introduce a pair of D5-related \Lf s, borrowing them from bow edges of Nakajima-Cherkis quiver varieties. 
The commuting variety $\cM_n$ is intimately related to the Hamiltonian reduction of a symplectic variety  by the action of $\GL(n)$, and the composition of NS5-related and D5-related \Lf s results in a Nakajima-Cherkis quiver variety, this time related to the \avert\ arrangement of interfaces.

A symplectic variety $\cZsm$ with the Hamiltonian action of $\GL(n)$ and the moment map $\mu\colon\cZsm\rightarrow \gl(n)$ determines an object $(\cZsm;\Tr X\mu)$ in the 2-category $\Bdd(\cM_n)$. Assuming that the action of $\GL(n)$ is free, the category of morphsisms between two such objects is
\[
\Hom\bigl( (\cZsm_1;\Tr X\mu_1), (\cZsm_2;\Tr X \mu_2)\bigr) = \xDCoh\bigl((\cZsm_1\times\cZsm_2)\hmr\GL(n)\bigr).
\]
This relation is due to the fact the category of matrix factorizations of a linear superpotential $W=\Tr X\mu$ is equivalent to the category of coherent sheaves on $\mu=0$.

Any symplectic variety $\cZsm$ with the hamiltonian action of $\GL(n_1)\times\GL(n_2)$ and momenta $\mu_1$, $\mu_2$ determines a \Lf\ 
%$\bigl( \cZsm;\Tr X_2 \mu_2 - \Tr X_1 \mu_1 \bigr)$ 
between $\cM_{n_1}$ and $\cM_{n_2}$:
\begin{gather}
\label{eq:zsmm}
(\cZsm,\xWsm),\quad \xWsm(X_1,X_2) = \Tr X_2 \mu_2 - \Tr X_1 \mu_1,
\\
\begin{tikzpicture}[baseline=-0.75cm]
\draw[thick,fill=white] (0,0) circle [radius=0.15];
\node[above] at (0,0.15) {$\qn_1$};
\draw[thick,-] (-0.15,-0.15) to [out=200,in=180] (0,-1.2);
\draw[thick,<-] (0.15,-0.15) to [out=-20,in=0] (0,-1.2);
\node[below] at (0,-1.2) {$X_1$};
\draw[thick,fill=white] (5,0) circle [radius=0.15];
\node[above] at (5,0.15) {$\qn_2$};
\draw[thick,-] (4.85,-0.15) to [out=200,in=180] (5,-1.2);
\draw[thick,<-] (5.15,-0.15) to [out=-20,in=0] (5,-1.2);
\node[below] at (5,-1.2) {$X_2$};
\draw[thick,fill=white] (2.5,0) circle [radius=0.5];
\node at (2.5,0) {$\cZsm$};
\draw[dashed] (0.15,0) -- (2,0);
\draw[dashed] (3,0) -- (4.85,0);
\node[above] at (1,0) {$\mu_1$};
\node[above] at (4,0) {$\mu_2$};
\end{tikzpicture}
\end{gather}
%\[
%\bigl( \cZsm;\Tr X_2 \mu_2 - \Tr X_1 \mu_1 \bigr).
%\]
and its Legendre transform
%$
%\bigl(\cZsm\times\rmTs\gln\times\rmTs\gln;\Tr(X_1\tY_1) + \Tr(X_2\tY_2) - W \bigr)
%$:
\begin{gather*}
%\[
\bigl(\cZsm\times\rmTs\gln\times\rmTs\gln;\Tr(X_1\tY_1) + \Tr(X_2\tY_2) - W(\cY_1,\cY_2) \bigr)
%\]
\\
%\[
\begin{tikzpicture}[baseline=-0.75cm]
\draw[thick,fill=white] (0,0) circle [radius=0.15];
\node[above] at (0,0.15) {$\qn_1$};
\draw[thick,-] (-0.15,-0.15) to [out=200,in=180] (0,-1.2);
\draw[thick,<-] (0.15,-0.15) to [out=-20,in=0] (0,-1.2);
\node[below] at (0,-1.2) {$X_1$};
\draw[thick,fill=white] (5,0) circle [radius=0.15];
\node[above] at (5,0.15) {$\qn_2$};
\draw[thick,-] (4.85,-0.15) to [out=200,in=180] (5,-1.2);
\draw[thick,<-] (5.15,-0.15) to [out=-20,in=0] (5,-1.2);
\node[below] at (5,-1.2) {$X_2$};
\draw[thick,fill=white] (2.5,0) circle [radius=0.5];
\node at (2.5,0) {$\cZsm$};
\draw[dashed] (0.15,0) -- (2,0);
\draw[dashed] (3,0) -- (4.85,0);
\node[above] at (1,0) {$\mu_1$};
\node[above] at (4,0) {$\mu_2$};
\draw[<-,dashed] (-0.15,-0.15) to [out=200,in=180] (0,-2);
\draw[-,dashed] (0.15,-0.15) to [out=-20,in=0] (0,-2);
\node [below] at (0,-2) {$\tY_1$};
\draw[<-,dashed] (4.85,-0.15) to [out=200,in=180] (5,-2);
\draw[-,dashed] (5.15,-0.15) to [out=-20,in=0] (5,-2);
\node [below] at (5,-2) {$\tY_2$};
\end{tikzpicture}
%\]
\end{gather*}

If the auxiliary variety $\cZsm$ is the arrow edge variety~\eqref{eq:arrvar}: $\cAnot = \rmTs\Hom(\IC^{n_1},\IC^{n_2})$, then the \Lf~\eqref{eq:zsmm} is that of~\eqref{eq:instcr} for $\qr=0$.

A general bow edge variety has a more complicated form: ignoring the stability conditions, it is the critical locus of the pair
\begin{equation*}
%\label{eq:edcrxl}
\begin{tikzpicture}[baseline=-0.5cm]
%\draw[very thick,->] (0,0.15) -- (0,2.85);
%\draw[very thick,->] (3,0.15) -- (3,2.85);
%\node[left] at (0,1.5) {$\qvc$};
%\node[right] at (3,1.5) {$\qvc'$};
\draw[->] (0.15,0) to [out=-15,in=195] (2.85,0);
\draw[<-] (0.15,0) to [out=15,in=165] (2.85,0);
%\draw[->] (0.15,3) -- (2.85,3);
\node[below] at (1.5,-0.2) {$\xphi$};
\node[above] at (1.5, 0.2) {$\xpsi$};
%\node[above] at (1.5,3) {$\yphi$};
%\draw[->] (2.85,2.85) -- (0.15,0.15);
%\node[above] at (1.4,1.5) {$\xfph$};
%\draw[very thick,->] (-0.15,-0.15) to [out=240,in=90] (-1,-1);
\draw[,-] (-0.15,-0.15) to [out=200,in=180] (0,-1.2);
\draw[,<-] (0.15,-0.15) to [out=-20,in=0] (0,-1.2);
\node[below] at (0,-1.2) {$\tY$};
%
%\draw[<-] (-0.15,-0.15) to [out=200,in=180] (0,-2);
%\draw[-] (0.15,-0.15) to [out=-20,in=0] (0,-2);
%\node[below] at (0,-2) {$Y$};
%
\draw[-] (2.85,-0.15) to [out=200,in=180] (3,-1.2);
\draw[<-] (3.15,-0.15) to [out=-20,in=0] (3,-1.2);
\node[below] at (3,-1.2) {$\tY'$};
%
%\draw[<-] (2.85,-0.15) to [out=200,in=180] (3,-2);
%\draw[-] (3.15,-0.15) to [out=-20,in=0] (3,-2);
%\node[below] at (3,-2) {$Y'$};
%
\draw[->] (0.15,0.15) to (1.35,1.35);
\draw[->] (1.65,1.35) to (2.85,0.15);
\node[left] at (0.75,0.75) {$\xrho$};
\node[right] at (2.25,0.75) {$\xsgm$};
\node[above] at (1.5,1.65) {$1$};
\draw[very thick,fill=white] (0,0) circle [radius=0.15];
\draw[very thick,fill=white] (3,0) circle [radius=0.15];
%\draw[thick,fill=white] (-0.15,2.85) rectangle ++(0.3,0.3);
%\draw[thick,fill=white] (2.85,2.85) rectangle ++(0.3,0.3);
\draw[very thick,fill=white] (1.35,1.35) rectangle ++(0.3,0.3);
%
%\draw[thick,fill=white] (0,3) circle [radius=0.15];
%\draw[thick,fill=white] (3,3) circle [radius=0.15];
\node[left] at (-0.15,0) {$\qn$};
%\node[left] at (-0.15,3) {$\qr$};
\node[right] at (3.25,0) {$\qn'$};
%\node[right] at (3.15,3) {$\qr$};
\end{tikzpicture}\quad,\qquad
\xWbw(\tY,\tY') = \Tr\,\xpsi(\tY'\xphi - \xphi \tY + \xsgm\xrho).
%\xWe = (\Tr X_2 Y_2 - \Tr Y_2 \xmu_2) - (\Tr X_1 Y_1 - \Tr Y_1\xmu_1),
\end{equation*}
Hence its \Lf\ has the form
\begin{equation*}
%\label{eq:edcrxl}
\begin{tikzpicture}[baseline=-0.5cm]
%\draw[very thick,->] (0,0.15) -- (0,2.85);
%\draw[very thick,->] (3,0.15) -- (3,2.85);
%\node[left] at (0,1.5) {$\qvc$};
%\node[right] at (3,1.5) {$\qvc'$};
\draw[dashed,->] (0.15,0) to [out=-15,in=195] (2.85,0);
\draw[dashed,<-] (0.15,0) to [out=15,in=165] (2.85,0);
%\draw[->] (0.15,3) -- (2.85,3);
\node[below] at (1.5,-0.2) {$\xphi$};
\node[above] at (1.5, 0.2) {$\xpsi$};
%\node[above] at (1.5,3) {$\yphi$};
%\draw[->] (2.85,2.85) -- (0.15,0.15);
%\node[above] at (1.4,1.5) {$\xfph$};
%\draw[very thick,->] (-0.15,-0.15) to [out=240,in=90] (-1,-1);
\draw[-] (-0.15,-0.15) to [out=200,in=180] (0,-1.2);
\draw[<-] (0.15,-0.15) to [out=-20,in=0] (0,-1.2);
\node[below] at (0,-1.2) {$X$};
\draw[dashed,<-] (-0.15,-0.15) to [out=200,in=180] (0,-2);
\draw[dashed,-] (0.15,-0.15) to [out=-20,in=0] (0,-2);
\node[below] at (0,-2) {$\tY$};
\draw[-] (2.85,-0.15) to [out=200,in=180] (3,-1.2);
\draw[<-] (3.15,-0.15) to [out=-20,in=0] (3,-1.2);
\node[below] at (3,-1.2) {$X'$};
\draw[dashed,<-] (2.85,-0.15) to [out=200,in=180] (3,-2);
\draw[dashed,-] (3.15,-0.15) to [out=-20,in=0] (3,-2);
\node[below] at (3,-2) {$\tY'$};
\draw[dashed,->] (0.15,0.15) to (1.35,1.35);
\draw[dashed,->] (1.65,1.35) to (2.85,0.15);
\node[left] at (0.75,0.75) {$\xrho$};
\node[right] at (2.25,0.75) {$\xsgm$};
\node[above] at (1.5,1.65) {$1$};
\draw[very thick,fill=white] (0,0) circle [radius=0.15];
\draw[very thick,fill=white] (3,0) circle [radius=0.15];
%\draw[thick,fill=white] (-0.15,2.85) rectangle ++(0.3,0.3);
%\draw[thick,fill=white] (2.85,2.85) rectangle ++(0.3,0.3);
\draw[very thick,fill=white] (1.35,1.35) rectangle ++(0.3,0.3);
%
%\draw[thick,fill=white] (0,3) circle [radius=0.15];
%\draw[thick,fill=white] (3,3) circle [radius=0.15];
\node[left] at (-0.15,0) {$\qn$};
%\node[left] at (-0.15,3) {$\qr$};
\node[right] at (3.25,0) {$\qn'$};
%\node[right] at (3.15,3) {$\qr$};
\end{tikzpicture}\quad,\qquad
\xW = \Tr X' \tY' + \Tr X\tY  - \xWbw(\tY,\tY').
%\Tr\xpsi(Y'\xphi - \xphi Y + \xsgm\xrho).
%\xWe = (\Tr X_2 Y_2 - \Tr Y_2 \xmu_2) - (\Tr X_1 Y_1 - \Tr Y_1\xmu_1),
\end{equation*}
while is Legendre transform cousin has fewer arrows:
\begin{equation*}
\label{eq:edcrxl}
\begin{tikzpicture}[baseline=-0.5cm]
%\draw[very thick,->] (0,0.15) -- (0,2.85);
%\draw[very thick,->] (3,0.15) -- (3,2.85);
%\node[left] at (0,1.5) {$\qvc$};
%\node[right] at (3,1.5) {$\qvc'$};
\draw[dashed,->] (0.15,0) to [out=-15,in=195] (2.85,0);
\draw[dashed,<-] (0.15,0) to [out=15,in=165] (2.85,0);
%\draw[->] (0.15,3) -- (2.85,3);
\node[below] at (1.5,-0.2) {$\xphi$};
\node[above] at (1.5, 0.2) {$\xpsi$};
%\node[above] at (1.5,3) {$\yphi$};
%\draw[->] (2.85,2.85) -- (0.15,0.15);
%\node[above] at (1.4,1.5) {$\xfph$};
%\draw[very thick,->] (-0.15,-0.15) to [out=240,in=90] (-1,-1);
\draw[-] (-0.15,-0.15) to [out=200,in=180] (0,-1.2);
\draw[<-] (0.15,-0.15) to [out=-20,in=0] (0,-1.2);
\node[below] at (0,-1.2) {$X$};
%
%\draw[<-] (-0.15,-0.15) to [out=200,in=180] (0,-2);
%\draw[-] (0.15,-0.15) to [out=-20,in=0] (0,-2);
%\node[below] at (0,-2) {$Y$};
%
\draw[-] (2.85,-0.15) to [out=200,in=180] (3,-1.2);
\draw[<-] (3.15,-0.15) to [out=-20,in=0] (3,-1.2);
\node[below] at (3,-1.2) {$X'$};
%
%\draw[<-] (2.85,-0.15) to [out=200,in=180] (3,-2);
%\draw[-] (3.15,-0.15) to [out=-20,in=0] (3,-2);
%\node[below] at (3,-2) {$Y'$};
%
\draw[dashed,->] (0.15,0.15) to (1.35,1.35);
\draw[dashed,->] (1.65,1.35) to (2.85,0.15);
\node[left] at (0.75,0.75) {$\xrho$};
\node[right] at (2.25,0.75) {$\xsgm$};
\node[above] at (1.5,1.65) {$1$};
\draw[very thick,fill=white] (0,0) circle [radius=0.15];
\draw[very thick,fill=white] (3,0) circle [radius=0.15];
%\draw[thick,fill=white] (-0.15,2.85) rectangle ++(0.3,0.3);
%\draw[thick,fill=white] (2.85,2.85) rectangle ++(0.3,0.3);
\draw[fill=white] (1.35,1.35) rectangle ++(0.3,0.3);
%
%\draw[thick,fill=white] (0,3) circle [radius=0.15];
%\draw[thick,fill=white] (3,3) circle [radius=0.15];
\node[left] at (-0.15,0) {$\qn$};
%\node[left] at (-0.15,3) {$\qr$};
\node[right] at (3.25,0) {$\qn'$};
%\node[right] at (3.15,3) {$\qr$};
\end{tikzpicture}\quad,\qquad
\xW = \xWbw(X,X').
%\Tr\xpsi(X'\xphi - \xphi X + \xsgm\xrho).
%\xWe = (\Tr X_2 Y_2 - \Tr Y_2 \xmu_2) - (\Tr X_1 Y_1 - \Tr Y_1\xmu_1),
\end{equation*}
%$$\cXar=\mathrm{T}^*\Hom(\CC^n,\CC^{n'})\times \Hom(\CC^n,\CC)\times \Hom(\CC,\CC^{n'}).$$

These two correspondences yield the functors $\xPsiwk$ and $\xPsibk$, \(k=n-n'\) of \eqref{eq:d4sq}
which realize the D5 interfaces \(\mathrm{D5}^{(k)}\) of charge \(k\). We use the super-index  in the notation of
D5 branes to indicate a difference of labels on two sides of the defect:
the difference of labels on two sides of  \(\mathrm{D5}^{(k)}\)  is \(k\).

\subsection{String theory perspective}\label{sec:strings}
\subsubsection{IIB construction}
The Galois version of the \KW\ model with two interface arrangements describes the physics of D3 branes stretched between two \HW\ type NS5 and D5 brane arrangements in IIB string theory.

%A standard IIB string theory construction with two Hanany-Witten type brane arrangements yields the %Kapustin-Witten model with interfaces, and we recall it here briefly.

%************
%
%\subsection{String Background}
%Our goal is to describe some 2d interfaces separating 3d B-models, whose target spaces (that is, Higgs branches) are Nakajima quiver varieties describing the moduli spaces of instantons on multi-centered Taub-NUT ($\mTN$) spaces, that is, on resolved $A_n$ singularities. The B-models result from a compactification of the Hanany-Witten (HW) brane arrangement~\cite{HananyWitten97}, while the interfaces are generated by another HW arrangement. The same double HW brane arrangement emerges through string dualities applied to an arrangement of M-branes inside the space $\IR^2\times \mTN_1\times\mTN_2$ as studied in~\cite{Witten09}.
%
%Recall the rules of the HW brane arrangement which is compatible with the B-twist of the Higgs branch (that is, with the GW twist~\cite{KapustinWitten07}).
%
%*************

Consider the IIB string theory on $\MtB=\IRvv{4}{0123}\times\IRvv{4}{4567}\times\IRvv{2}{89}$,  where $\IRvv{k}{i_1\cdots i_k}$ denotes $\IR^k$ with coordinates $x_{i_1},\ldots,x_{i_k}$. The twisting identifies the first two factors, so that the $\SO (4)$ rotation of $\IRvv{4}{0123}$ is accompanied by the matching rotation of $\IRvv{4}{4567}$.
D3-branes are stretched along $\IRvv{4}{0123}$.
Consider an orthogonal splitting $\IRvv{4}{0123} = \IRbrh \oplus \IRo$, where $\IRbrh$ subspace is tangent to an NS5 or a D5 brane, and the corresponding splitting $\IRvv{4}{4567} = \IRbrhp\oplus {\IRo}'$. For \emph{this choice} of the brane directions within the stack of D3-branes, the D3-transverse subspace $\IRvv{6}{456789}$ splits into the sum of the Coulomb subspace $\IRhcl = \IRbrhp$ and the Higgs subspace $\IRhhg = {\IRo}'\times\IRvv{2}{89}$.
Now an NS5-brane must be parallel to $\IRbrh\times\IRhcl$, while a D5-brane must be parallel to $\IRbrh\times\IRhhg$.
%
%Then an NS5-brane must be parallel to $\IRbrh\times\IRbrhp$, while a D5-brane must be parallel to $\IRbrh\times{\IRo}'\times\IRvv{2}{89}$.

%After the topological twisting, the physics of the stack of D3-branes spanning $\IRvv{4}{0123}$ is %described by the Kapustin-Witten model~\cite{KapustinWitten07} of electro-magnetic geometric Langlands %duality at $t=i$ (i.e., at the Galois side) in the presence of D5 and NS5 Gaiotto-Witten %interfaces~\cite{GaiottoWitten09}.

\subsubsection{IIA construction}
Replace $\IRvv{4}{0123}$ with $M^4$ of \eqref{eq:mfour}. Now the IIB space-time is
\[
\MtB = \IRz\times\IRtot\times \Soh \times \IRvv{4}{4567}\times\IRvv{2}{89}.
\]
%$\IRvv{1}{3}$ is periodic, so the IIB space-time is
%We consider the IIB theory on
%%%\[\IR^9\times S^1 = \IRvv{3}{012}\times \Soh\times\IRvv{4}{4567}\times\IRvv{2}{89}.\]
%that is, $\IRvv{1}{3}$ is periodic.
The \ahor\ \HW\ brane arrangement corresponds to the choice $\IRbrh = \IRvv{3}{012}$, so its $\nsm$ NS5-branes stretch along $\IRvv{3}{012}\times\IRvv{3}{456}$ and its D5-branes stretch along $\IRvv{3}{012}\times\IRvv{3}{789}$. The \avert\ \HW\ arrangement has the form $\IRbrh =\IRvv{1}{0}\times\IRodr\times \Soh$, where the `directional' subspace $\IRodr\subset\IRvv{2}{12}$ is tangent to the brane interface which is represented by its curve $\xcraI$ in the $\IRvv{2}{12}$ plane.

%The full study of the resulting Kapustin-Witten model (4-TQFT) with interfaces would require the description of defects appearing at the intersections of basic and \xaddt\ interfaces. In this paper we pursue an easier approach. In the TQFT language, we compactify the circle $\Soh$. We assume that the number of stacked D3-branes (i.e., multiplicity) is the same on both sides of each D5-brane from the basic arrangement. Hence the compactification turns the initial 4-TQFT into a family of 3d B-models (3-TQFTs), whose targets (equivalently, the Higgs branches) are the Nakajima quiver varieties, corresponding to $\tAm$ quivers, where $\nsm$ is the number of NS5-branes in the basic arrangement. The dimensions of the circle vertices of these quivers are determined by the multiplicities of D3-branes between each pair of consequitive basic NS5-branes. Each quiver governs a domain in $\IRvv{3}{012}$. The domains are separated by interfaces coming from individual NS5 and D5 interfaces of 4-TQFT. Each interface has a form $\xcr\times\IRvv{1}{0}$, where $\xcr\subset\IRvv{2}{12}$ is a curve in the plane $\IRvv{2}{12}$.

In sting theory the compactification of $\Soh$ within the KW theory is a result of performing $T$-duality on $\Soh$. This duality turns the IIB theory into IIA theory and
% on
%\[
%\MtA = \IRz\times\IRtot\times \Sohp \times \IRvv{4}{4567}\times\IRvv{2}{89},
%\]
%where $\Sohp$ is the $T$-dual circle.
the stack of D3-branes turns into the stack of D2-branes streched along $\IRvv{3}{012}$. Now their movements are described by a 3d TQFT. Since the circle $\Soh$ intersects $\nsm$  NS5-branes of the \ahor\  arrangement,
the $T$-duality turns their orthogonal space $\Soh\times\IRvv{3}{789}$ into the $\nsm$-centered Taub-NUT space $\xmTNcc$, so the IIA space-time is
\[
\MtA = \IRz\times\IRtot\times\IRvv{3}{456}\times\xmTNcc.
\]
%the IIA spacetime $\MtA$ has  turn their orthogonal space $\Soh\times\IRvv{3}{789}$ into an $\nsm$ -centered Taub-NUT space $\xmTN{\nsm}{3789}$, while
D5 branes of the \ahor\ arrangement become D6-branes wrapping $\IRvv{3}{012}\times\xmTNcc$.
%Now 3-TQFT is the theory of the movement of (a portion of) the stack of D2-branes.

\def\dtnv#1{ \dot{#1}}
\def\dth{ \dtnv{3}}
\def\dts{ \dtnv{7}}
\def\dte{ \dtnv{8}}
\def\dtn{ \dtnv{9}}
\def\Rfdt{ \IRvv{4}{\dth\dts\dte\dtn}}
\def\Rths{ \IRvv{2}{\dth\dts}}
\def\Rten{ \IRvv{2}{\dte\dtn}}
After the $T$-duality, the NS5 and D5 branes of the \avert\ HW arrangement turn into NS5 and D4-branes of IIA. They separate the stack of D2-branes into domains. In other words, they become the interfaces of the 3d B-model and hence they are described by Lagrangian correspondences. The form of the Lagrangian correspondece for an additional brane depends on which 2-cycle this brane  wraps inside the Taub-NUT space $\xmTNcc$.
%
%The multiplicities of D2-branes may change across the corresponding interfaces separating individual domains, that is, each domain is colored by the same $\tAm$ quiver, but its dimensions may change. An additional IIA NS5-brane stretches along $\IRvv{1}{0}\times\xcr $

\subsubsection{The case of a single NS5 brane}\label{sec:single-brane}
The commuting variety of subsection~\ref{sec:ns5d5} emerges when the \ahor\ arrangement consists of a single NS5 brane. If we ignore the metric, identifying the Taub-NUT space with $\Rfdt$, then after the T-duality the IIA space-time becomes
\[
\MtA = \IRz\times\IRtot\times\IRvv{3}{456}\times\Rfdt\cong\IR^{10}.
\]
Here we use dotted indices to separate the new coordinates from the $T$-dual old ones.
%(note that the space emerging in $\MtA$ $\IRvv{4}{3789}$ is $T$-dual to the space $\IRvv{4}{3789}$ of %$\MtB$).
The manifold $\MtA\cong\IR^{10}$ this time contains a single (formerly, \avert) \HW-type arrangment of NS5 and D4 branes, and the Higgs branch of D2 branes stretched between them is the Nakajima quiver variety corresponding to the \emph{\avert} brane arrangement. As explained in subsection~\ref{sec:ns5d5}, this time the zero moment conditions at quiver circles are due to the criticality of the superpotential.

The origin of Legendre pairs of interfaces is now transparent. The Taub-NUT space $\Rfdt$ has two cigar subspaces: $\Rths$ and $\Rten$ that correspond to the arrow $X$ in the commuting variety quiver diagram~\eqref{eq:cmqv} and to its omitted symplectic dual arrow. The NS5 brane of the interface $\xPhiwk$ and the D5 brane of the interface $\xPsibk$ wrap the cigar $\Rths$, while the NS5 brane of $\xPhibk$ and the D5 brane of $\xPsiwk$ wrap the cigar $\Rten$ (the latter branes are often denoted in string theory litearture as NS5' and D5').

\def\Rcob{ \IR_{\mathrm{cob}}}
\def\CYh{ \mathrm{CY}^3 }
\def\HKt{ \mathrm{HK}^2 }
\def\hK{hyper-K\"{a}hler}
\def\xC{ \mathcal{C}}
\def\Lh{ L }
\def\bCSW{CSW}
\def\TN{ \mathrm{TN}}
\def\Mh{ M^3 }
\def\Mve{M^5}
\def\UCYo{ \mathrm{U}(1)_{\mathrm{CY}}}
\def\CDs{ \mathcal{C}_{\mathrm{D6}}}
\def\pMt{ p_{\mathrm{M2}}}
\def\MeM{ M^{11}_{\mathrm{M}}}

\subsection{Link homology from a stack of D2 branes}\label{sec:CS-phys}
\subsubsection{A general setup}
A stack of D3 branes sandwiched between two \HW\ arrangments and a stack of D2 branes split by NS5 and D4 related interfaces into domains carrying quiver-based 3d B-models, appear in theories of link homologies. Generally, link homology is a special example of (categorified) Donaldson-Thomas invariants of a Calabi-Yau 3-fold $\CYh$ in presence of special Lagrangian submanifolds $L\subset \CYh$. Hence we begin with M-theory 
%part of the structure of M-theory 
on a manifold of the form
\begin{equation} \label{eq:melv}
\MeM = \CYh\times\HKt\times \Rcob,
\end{equation}
where 
%$\CYh$ is a Calabi-Yau 3-fold, 
$\HKt$ is a (complex) 2-dimensional \hK\ manifold and $\Rcob$ is the cobordism time. \HW\ brane arrangements come from M5 branes. Each M5 brane has a form $\Lh\times\xC\times\Rcob$, where $\Lh\subset\CYh$ is Lagrangian, while $\xC\subset\HKt$ is a complex curve. 

Assume that $\CYh$ is "small" (that is, all relevant physics happens within a small domain inside $\CYh$). Then link homology is (a part of) the space of states of BPS particles attached to M5-related defects in the effective 5-dimensional theory on $\HKt\times\Rcob$. The BPS particles are M2 branes of the form 
\begin{equation}
    \label{eq:mtshape}
\xSg\times \{\pMt\}\times\Rcob, 
\end{equation}
where $\xSg\subset\CYh$ is a holomorphic curve attached by its boundary components to the Lagrangian submanifolds $\Lh$, while $p\in\HKt$ is a point.

Our approach to the description of the BPS states involves three steps. 

First, suppose that a group $\UCYo$ acts on $\CYh$ preserving the Calabi-Yau form. Taking a quotient by this action reduces M-theory on $\MeM$ to IIA string theory on 
\[
\MtA = \Mve\times\HKt\times\Rcob,\qquad \Mve = \CYh/\UCYo.
\]
The quotient may produce D6 branes of the form $\CDs\times\HKt\times\Rcob$, where $\CDs\subset\Mve$ is the 2-dimensional surface of $\UCYo$-stable points. Depending on whether $L$ is transverse to $\UCYo$ orbits or contains them, each M5 brane becomes an NS5 brane or a D4 brane. If M2 branes are transverse to $\UCYo$ ortibs or invariant with respect to $\UCYo$ action, then they become D2 branes, possibly lying inside D6 branes and with boundaries on NS5 and D4 branes. 

Second, we place M5 branes inside $\CYh$ in such a way that the resulting D2 branes assemble in a stack, whose collective movements and vibrations are described by a $d=3$, $N=4$ SYM theory. The boundaries of individual D2 branes ending on NS5 and D4 branes now transform into interfaces inside the stack of D2 branes, separating domains with (generally) different "stack thickness".

Third, the SYM theory of the D2 stack may be twisted in such a way that a special scalar supercharge $Q$ would allow us to turn  the theory into a (possibly gauged) topological 3d B-model. If the NS5 and D4 interfaces are $Q$-invariant, then the space of BPS particle states becomes isomorphic to the homology of $Q$, that is to the space of of states of this 3d topological B-model.

\def\rmcyl{ \mathrm{cyl}}
\def\ICcyl{\IC^*_{\rmcyl}}
\def\ICcyld{ \IC_{\rmcyl}^\vee}
\def\rmbr{\mathrm{br}}
\def\IRbr{ \IR_{\rmbr}}
\def\IRbrd{ \IRbr^\vee }
\def\IRcyl{ \IR_{\rmcyl}}
\def\CS{Chern-Simons}
\def\ICx{ \IC_x }
\def\ICy{ \IC_y }
\def\cL{ \mathcal{L}}
\def\cnN{ \mathrm{N}^\vee }
\def\pbr{ p_{\mathrm{br}}}
\def\rmU{ \mathrm{U}}
\def\MfCS{\mathrm{M5}_{\mathrm{CS}}}
\def\Mfbr{\mathrm{M5}_{\mathrm{br}}}
\def\sndw{\bullet\!\! -\!\! \bullet}
\subsubsection{Witten's setup}
According to Witten, in order to relate the graded dimension of BPS states to the Chern-Simons-Witten (\bCSW) partition function, one has to make two specific choices in~\eqref{eq:melv}. First, one has to choose $\HKt=\TN$, $\TN$ being the Taub-NUT space, that is $\TN=\IC^2=\ICx\times\ICy$ with a special ALF metric. Second, one has to choose $\CYh=\rmTs\Mh$, where $\Mh$ is a 3d manifold carrying the \bCSW\ theory. A \CS\ generating stack of $N$ M5 branes wraps $\Mh\times\ICy\times\Rcob$. For each link component $\cL\subset\Mh$ one introduces an M5 brane of the form $\cnN\cL\times\ICx\times\Rcob$, where $\cnN\cL\subset\rmTs\Mh$ is the conormal bundle of $\cL$. One has to push these link-related M5 branes off the zero section $\Mh$, then the BPS particles will be M2 branes of the form~\eqref{eq:mtshape}, where $\xSg$ is an annulus, one of its boundary components being attached to the stack of \CS\ M5 branes and the other being attached to the pushed-off link-related M5.

In order to implement our approach and following M.~Aganagic, we choose $\Mh = \ICcyl\times\IRbr$, where $\ICcyl$ is a cylinder and $\IRbr$ is the braid time. 
A link component is locally a braid component of the form $\{\pbr\}\times\IRbr$,  $\pbr\in\ICcyl$. 
Present $\rmTs\IRbr$ as a product: $\rmTs\IRbr = \IRbr\times\IRbrd$. Each M5 brane is represented by a point at the origin of $\IRbrd$, which is the zero section of $\rmTs\IRbr$. We push all M5 branes off the zero section by giving each M5 brane its own position at $\IRbrd$. It is particularly convenient to keep \CS\ M5 branes to the left of the braid M5 branes. M2 branes are now stretched along $\IRbrd$, being sandwiched between these M5 branes. For example, the following diagram represents the case of a $\rmU(2)$ \CS\ theory and a two-strand braid:
\begin{equation}
    \label{eq:mthrddiag}
\begin{tikzpicture}
\draw[very thin,->] (-6,0) -- (6,0);
\node[right] at (6,0) {$\IRbrd$};
\draw[ultra thick,-] (-5,0) -- (-2,0);
\draw[ultra thick,-] (5,0) -- (2,0);
\draw[ultra thick,-] (-2,0.2) -- (2,0.2);
\draw[ultra thick,-] (-2,-0.2) -- (2,-0.2);
\node[below] at (-5,-0.32) {$\MfCS$};
\node[below] at (-2,-0.32) {$\MfCS$};
\node[below] at (5,-0.32) {$\Mfbr$};
\node[below] at (2,-0.32) {$\Mfbr$};
\node[above] at (3.5,0) {M2};
\node[above] at (-3.5,0) {M2};
\node[above] at (0,0.2) {two M2};
\draw[very thick,fill=white] (-2,0) circle [radius=0.3];
\draw[very thick,fill=white] (2,0) circle [radius=0.3];
\draw[very thick,fill=white] (-5,0) circle [radius=0.3];
\draw[very thick,fill=white] (5,0) circle [radius=0.3];
\end{tikzpicture}
\end{equation}

\def\Socyl{ S^1_{\rmcyl}}
Now we implement our approach. 
Present the cylinder $\ICcyl$ as $\ICcyl=\Socyl\times\IRcyl$. 
The group $\UCYo$ rotates $\Socyl$
%, we denote $\IRcyl=\ICcyl/\UCYo$, 
and M-theory is reduced to IIA theory on \[
\MtA=\IRcyl\times\ICcyld\times\IRbr\times\IRbrd\times\TN\times\Rcob.
\]
\CS\ related M5 branes become D4 branes spanning $\IRcyl\times\IRbr\times\ICx\times\Rcob$, braid related M5 branes become NS5 branes spanning $\ICcyld\times\IRbr\times\ICx\times\Rcob$ and M2 branes which represent the link homology related BPS particles of the 5d theory on $\TN\times\Rcob$ become D2 branes spanning $\sndw\times\IRbr\times\Rcob$, where $\sndw$ denotes the segments stretched along $\IRbrd$. The diagram~\eqref{eq:mthrddiag} now becomes a familiar \HW\ type arrangement:
\begin{equation}
    \label{eq:mthrddiag1}
\begin{tikzpicture}
\draw[very thin,->] (-6,0) -- (6,0);
\node[right] at (6,0) {$\IRbrd$};
\draw[ultra thick,-] (-5,0) -- (-2,0);
\draw[ultra thick,-] (5,0) -- (2,0);
\draw[ultra thick,-] (-2,0.2) -- (2,0.2);
\draw[ultra thick,-] (-2,-0.2) -- (2,-0.2);
\node[below] at (-5,-0.32) {D4};
\node[below] at (-2,-0.32) {D4};
\node[below] at (5,-0.32) {NS5};
\node[below] at (2,-0.32) {NS5};
\node[above] at (3.5,0) {D2};
\node[above] at (-3.5,0) {D2};
\node[above] at (0,0.2) {two D2};
\draw[very thick,fill=white] (-2,0) circle [radius=0.3];
\draw[very thick,fill=white] (2,0) circle [radius=0.3];
\draw[very thick,fill=white] (-5,0) circle [radius=0.3];
\draw[very thick,fill=white] (5,0) circle [radius=0.3];
\end{tikzpicture}
\end{equation}
\def\IRSod{\IR^\vee_{S^1}}
\def\IRcyld{\IRcyl^\vee}
\def\rmU{ \mathrm{U}}
Following the presentation $\ICcyl=\Socyl\times\IRbr$, we decompose \(\ICcyld=\IRSod\times\IRcyld\).
We twist the 3d theory of D2 branes by identifying their worldspace $\IRbrd\times\IRbr\times\Rcob$ with $\IRcyl\times\IRcyld\times\IRSod$. Now a domain of thickness $n$ in the stack of D2 branes carries the 3d B-model whose target is the commuting variety 
$\cM_n = \rmTs\gln\hmr\GLn$ of subsection~\ref{sec:ns5d5}. These domains are separated by NS5 and D4 interfaces corresponding to the functors $\xPhiwk$ and $\xPsiwk$ respectively. 
If the sandwich~\eqref{eq:mthrddiag1} of NS5 and D4 branes is squeezed, then 3d B-models become a single 2d B-model whose target is the Nakajima quiver variety $\cM_Q$ of the corrsponding bow-arrow quiver. The braiding of the braid strands $\{\pbr\}\times\IRbr$ within $\Mh = \ICcyl\times\IRbr$ results in the action of the affine braid group on the category $\xDCoh(\cM_Q)$ introduced by Rina Anno \cite{AnnoNandakumar16}.

This construction can be generalized, if we allow \CS-related and braid-related M5 branes to wrap both types of cigars: $\ICx$ and $\ICy$.
%some \CS\ generating M5 branes to wrap $\ICx$ cigars in addition to $\ICy$ ones. 
If $N$ M5 branes wrap $\Mh\times\ICy\times\Rcob$ while $M$ M5 branes wrap
$\Mh\times\ICx\times\Rcob$, then $\Mh$ carries the supergroup $\rmU(N|M)$ \WCS\ theory. The $\ICx$-related D4 branes are denoted D4' and they correspond to the interfaces $\xPsibk$. The original braid M5 branes wrapped on $\ICx$ cigar yield $\rmU(M|N)$ representations $\IC^{N|M}$. If a braid-related M5 brane is wrapped on $\ICy$ cigar, then it becomes an NS5' brane in the IIA string theory corresponding to an interface $\xPsiwk$ and producing the representation $\IC^{N|M}$. Note that in the original setup the group $\rmU(N)$ appears as $\rmU(N|0)$ and a braid-related NS5 generates the odd version $\IC^{0|N}$ of the defining representation.
\def\CYhd{ \CYh_{\mathrm{dc}}}
\def\CYhr{ \CYh_{\mathrm{rc}}}
\def\bPo{ \mathbb{P}^1 }
\def\Omoo{ \mathcal{O}(-1)_1}
\def\Omot{ \mathcal{O}(-1)_2}

\subsubsection{Vafa's setup}
The following is a brief account of a part of \cite{DimofteGarberHilburnOblomkovRozansky23}. If $\Mh=S^3$, then the corresponding Calabi-Yau manifold $\CYhd=\rmTs S^3$ is a deformed conifold. Assume that a link in $S^3$ is a closed `tight' braid winding along a `base unknot'. Then, according to Vafa, this M-theory setup with $N$ \CS\ related M5 branes wrapping the zero section $S^3\subset \rmTs S^3$ can be equivalently replaced by a resolved conifold 
\[
\CYhr = \Bigl( \Omoo\otimes \Omot\longrightarrow \bPo\Bigr).
\]
The \CS-related M5 branes disappear, but the braid-related M5 braids remain as Lagrangian submanifolds of $\CYhr$ in such a way that the BPS-generating M2 branes emerge as a tight disk-shaped stack inside the fiber of $\Omoo$  at the "north pole" of $\bPo$.

We select the symmetry $\UCYo$ of $\CYh$ which rotates $\bPo$ around the polar axis (so that $0$ and $\infty$ (that is, the north and south poles of $\bPo$) remain fixed) while rotating the fibers of $\Omoo$ and $\Omot$ in such a way that the fiber of $\Omoo$ at the north pole is fixed. As a result, the braid-related branes M5 become NS5, while a stack of M2 branes becomes a stack of D2 branes inside a D6 brane sweeping $\Omoo|_{0}\times\TN\times\Rcob$. This stack carries the 3d B-model whose target is the Hilbert scheme of points on $\IC^2=\TN$ and it is separated into domains by braid-related NS5 branes.

The fiber of $\Omot$ at the south pole of $\bPo$ (that is, $\Omot|_{\infty}$) is also stable under the action of $\UCYo$, hence the IIA picture includes another D6 brane $\Omot|_{\infty}\times\TN\times\Rcob$. The M2 branes wrapping the zero section $\bPo$ of $\CYhr$ and attached to M2 branes of link homology become strings stretched between the south pole D6 and the center of the stack of D2 branes at north pole, creating the Wilson line carrying the exterior powers of the tautological bundle over the Hilbert scheme target.

\section{Mathematical perspective}

\label{sec:algebraic-model-krs}
This section provides a motivation for the definition of our main
working category. We do not provide details for the constructions in
this section. The main 2-category that we work with appears in section~\ref{sec:equiv-vers-three} and the details of the main
working category are spelled out  in this section.

In~\cite{KapustinSaulinaRozansky09} and~\cite{KapustinRozansky10} a 2-category $\catB(\mathbf{Y})$ was associated to a symplectic variety $\mathbf{Y}$. This construction admits a generalization to the equivariant case, namely, a 2-category $\catB^{\xG}(\mathbf{Y})$ emerges when a group $\xG$ has a hamiltonian action on $\mathbf{Y}$.
We present an three-categorical description of this category.

\subsection{Large 3-category \(\thcs\): a proposal}
\label{sec:large-3-category}
The  objects of $\ob(\thcs)$ are holomophic symplectic manifolds.
The morphisms between
%Given
two such manifolds $\bfX,\bfY\in \ob(\thcs)$ form a 2-category $\Hom(\bfX,\bfY)$
%where the  objects $\ob(\Hom(X,Y))$ are:
whose objects are fibrations with lagrangian bases:
\begin{equation}\label{eq:sp-Lag}
  (F,L,f\colon F\to L),\quad L\subset \bfX\times \bfY \mbox{ is a  Lagrangian subvariety.}
\end{equation}

As hinted in \cite{KapustinSaulinaRozansky09} the fibration of \(F\) over the Lagrangian \(L\) is a categorical analog of a local system over the Lagrangian in
the Fukaya category theory.
%\textbf{Question:} \textcolor{red}{Do we need to assume here that $L$ and $F$ are holomorphic?}

The composition of $(F,L,f)\in\mathbb{H}om(\bfX,\bfY)$ and $(G,L',g)\in\HHom(\bfY,\bfW)$ is defined to be $(H,L'',h)$ where
$$ H:=(F\times \bfW)\times_{\bfY} (\bfX\times G),$$
while $h\colon H\to \bfX\times \bfW$ is the natural projection and $L'':=h(H)$.
The composition is not always defined since the \(L''\) is not always a Lagrangian.

%\textbf{Question:} \textcolor{red}{Why $L''$ is Lagrangian?} Also the variety $H$ is defined by some kind of derived intersection, so it is some
%kind of DG scheme. All that needs more clarification.

The morphisms between the objects $(F,L,f),(F',L',f')\in \mathbb{H}om(\bfX,\bfY)$ is defined as follows:
 $$\Hom((F,L,f),(F',L',f')):=\mathrm{Qcoh}(F\times_{\bfX\times \bfY} F').$$
 The category of quasi-coherent sheaves can be enriched to \((\infty,1)\)-category. The study of the structure of these categories as well as
 \((2,\infty)\)-category of such \((\infty,1)\)-categories is initiated in \cite{GaitsgoryRozenblyum17}. The above mentioned work is extremely technical and we do not attempt to use infinity categories in our work. Instead, we use the previous definition as a guiding principle.

% This is a DG category, hence the morphisms between the objects in this category are complexes of vector spaces.
% The other option is to replace the DG category with the derived category.
%Then the homomorphisms would form vector spaces, although
%it is unclear whether it is possible to work in this framework.

Let us also remark that the category $\thcs$ contains a final object $pt$ which is just a point.
Thus for every $\bfX\in \ob(\thcs)$ there is a related 2-category \[\ddot{\mathrm{Cat}}(\bfX):=\mathbb{H}om(pt,\bfX).\]

\subsection{Small 3-category: first steps}
\label{sec:small-3-category}
In this paper we construct
smaller category $\thcm$, that has fewer objects and fewer morphisms. As a pay-back we get a mathematically satisfactory theory for two and three-morphisms
in such categories, without use of heavy machinery the derived algebraic geometry mentioned above.

%First we define a slightly bigger category \(\thcm\).
The objects of the category $\thcm$ are smooth algebraic varieties.
The 2-category of morphisms $\HHom(\cX,\cY)$ between two objects $\bfX,\bfY\in \ob(\thcm)$ has objects
\[(\cZ,W),\quad \cZ \mbox{ is algebraic manifold}, \quad W\in\CC[ \bfX\times \cZ\times \bfY].\]
For $\bfX,\bfY,\bfW\in \ob(\thcm)$ and $(\cZ,W)\in \HHom(\bfX,\bfY)$, $(\cZ',W')\in \HHom(\bfY,\bfW)$ the composition  is defined by:
$$(\cZ',W')\circ (\cZ,W)=(\cZ\times \bfY\times \cZ',\pi_{xy}^*(W')-\pi_{yz}^*(W))\in \HHom(\bfX,\bfW),$$
where \(\pi_{xy},\pi_{yz}\) are the projections from \(\bfX\times\cZ\times \bfY\times \cZ'\times \bfY\) onto \(\bfX\times\cZ\times \bfY\) and \(\bfY\times \cZ'\times \bfZ\), respectively.

The category of morphisms $\Hom((\cZ,W),(\cZ',W'))$ between the objects $(\cZ,W),(\cZ',W')\in \Hom(\bfX,\bfY)$ is a triangulated one-category
of matrix factorizations
\[\Hom((\cZ,W),(\cZ',W'))=\MF(\bfX\times \cZ\times \cZ'\times \bfY,\pi^{\prime,*}(W')-\pi^*(W)),\]
where \(\pi,\pi'\) are projections onto \(\bfX\times \cZ\times\bfY\) and
\(\bfX\times\cZ'\times \bfY\).

In the discussion above we indicated pull-backs in the formulas for the potentials.
 To lighten up the notations in the discussion below we omit the pull-backs in formulas for the potentials, since the pull-backs are clear from the context.

The objects of the homotopy category \(\MF(\cZ,W)\), \(W\in\CC[\cZ]\) are pairs
\[(M,D),\quad M=M_0\oplus M_1,\quad D\in \Hom_{\CC[\cZ]}(M,M), \quad D^2=W,\]
where \(D\) is \(\ZZ_2\)-graded morphism: \(D(M_i)=M_{i+1}\). If we think of the
matrix factorizations as two periodic curved complexes, then the homotopy is defined in exactly the same way as for usual complexes.

The standard convolution product \(\star\) (see the next section for the details) yields the monoidal structure:
\[\star: \MF(\cZ\times \cZ',W-W')\times
  \MF(\cZ'\times \cZ'',W'-W'')\to \MF( \cZ\times \cZ'',W-W'').\]

For two objects $\calF=(M,D),\;\calG=(M',D')\in \MF(\cZ,W)$ the space of morphisms
is defined by:
\[\mathcal{H}om(\calF,\calG)=\{\phi\in \Hom_{\CC[\cZ]}(M,M')\;|\;D\circ\phi=\phi\circ D'\}.\]

We also define \(\Dper(\cZ)\) to be a derived category of the two-periodic complexes of
\(\CC[\cZ]\)-modules. Given an element \(\calF=(M,D)\in \MF(\cZ,W)\), by inverting the sign of the component of  the differential
from \(M_0\) to \(M_1\) we obtain an element of \(\MF(\cZ,-W)\) which we denote \(\calF^*\). Since the potentials of the matrix factorizations
add if take tensor product, we have:
\[\mathcal{H}om(\calF,\calG)=\mathcal{E}\mathrm{xt}(\calF,\calG)=\calG\otimes \calF^*\in \Dper(\cZ).\]

There is a special class of invertible morphisms  that we want to discuss separately. These morphisms provide an explicit realization of the Kn\"orrer periodicity
functor \cite{Knorrer}.
%To define a smaller category \(\thc'_{man}\) we need to define an equivalence relation on the set of  morphism and we use Knorrer periodicity \cite{Knorrer}
%in this construction.
Namely, if \(V\to\cZ\) is a finite rank vector bundle over \(\cZ\)  and \(V^*\) is its dual then there is a canonical bilinear  function
\[\mathsf{Q}_V\in \CC[V\times V^*]. \]

For any \(W\in \CC[V]\) there is a invertible functor \cite{Knorrer}:
\[\KN_{V}:\MF(\cZ,W)\rightarrow \MF(V\times V^*,W+\mathsf{Q}_V),\quad \KN_{V}(\calF)=\mathcal{KN}_Q\otimes \calF,\]
\[\mathcal{KN}_Q=(\Lambda^\bl V_\theta, D=\sum v_i\frac{\partial}{\partial \theta_i}+v_i^*\theta),\]
where \(\Lambda^\bl V_\theta\) is an exterior algebra with the generators \(\theta_i\), \(v_i^*\) and \(v_i\) are dual coordinates on the spaces
\(V\) and \(V^*\).

That is \(\mathcal{KN}_Q\in \Hom((\cZ,W),(V\times V^*,W+Q_V))\), \((\cZ,W),(V\times V^*,W+Q_V)\in \mathbb{H}om(\bfX,\bfY)\). The functor \(\KN_V\) is an equivalence of categories \cite{Knorrer}. To construct the inverse we need a functor:
\[\KN_{V}^*:\MF(V\times V^*,W+\mathsf{Q}_V)\rightarrow\MF(\cZ,W),\quad \KN_{V}^*(\calF)=\Hom(\calF,\mathcal{KN}_Q).\]
The composition of functors \(\mathrm{KN}_V\) and \(\mathrm{KN}^*_V\) is the duality
functor \(\mathrm{D}\). In general, for a matrix factorization
\(\calF\in \MF(\scZ,W)\) the dual matrix factorization \(\mathrm{D}(\calF)\in
\MF(\scZ,-W)\) is defined by
\[\mathrm{D}(\calF)=\Hom(\calF,\calO_{\scZ}).\]
Since, \(\KN_{V}^*\) and \(\mathrm{D}\) are  functors we can enlarge \(\Hom((V\times V^*,W+Q_V),(\cZ,W))\)
by including \(\KN_V^*\) and \(\mathrm{D}\). Thus the Kn\"orrer functor internally invertible in our category.

% We abbreviate this morphism by \(\sim\) because
% the Knorrer functor is canonical the equivalence relation on \(\HHom(\bfX,\bfY)\) and
% %\begin{equation} \label{eq:KN}
% %  (Z,W)\sim (V\times V^*,W+\mathsf{Q}_V),\end{equation}
% it is compatible with the composition: if \(g\sim f\) then \(h\circ g\sim h\circ f\) and \(g\circ h\sim f\circ h\).

%Thus we have a well-defined 3-category:
%\[\thc'_{\tman}:=\thc_{\tman}/\sim.\]

%The  smaller category \(\thc_{man}'\) is closer to the category proposed in \cite{KapustinRozansky10} but the categorical setting for
%taking the quotient by the equivalence relation is rather subtle (see for example \cite{Drinfeld04}). We prefer to work with the large
%category and indicate the Knorrer periodicity isomorphisms explicitly instead of taking quotient by all Knorrer isomorphisms.

\subsection{Relation between the  3-categories}
\label{sec:embedd-three-categ}
There is a functor $j_3\colon\thcm\to \thcs$ which acts on the objects as
$$ \bfX\mapsto \mathrm{T}^*\bfX=\bfX^{\rsm}.$$
This functor explains an introduction of matrix factorizations in our construction, for more discussion of the role of matrix factorizations in context of
3D TQFT see the original \cite{KapustinRozansky10}. 

The embedding at the level of morphism is based on describing lagrangian submanifolds by generating functions \cite{Arnold74}. Let us recall the basic facts.
Given a (complex) manifold $\cZ$ and function $W: \bfX\times \cZ\to \CC$ we define a subvariety
$F_W\subset \mathrm{T}^*\bfX\times \cZ$  by the equations
$$ \partial_{z_i}W(x,p,z)=0,\quad \partial_{x_i}W(x,p,z)=p_i,$$
where $z_i$ are local coordinates along $\cZ$, $x_i$ are local coordinates along $\bfX$ and $p_i$ are
the coordinates on the cotangent space that are dual to the coordinates $x_i$. As shown
in \cite{Arnold74}, the image $L_w$ of $F_w$ in $\mathrm{T}^*\bfX$ under the natural projection
$\pi$  is  a generically  Lagrangian subvariety.

Thus the functor $j_3$ at the level of homomorphisms between the objects is defined as
$$ (\cZ,W)\mapsto (F_W,\pi,L_W).$$

The real problem arises at the  level of morphisms between the morphisms of objects.
% \textcolor{red}{HERE IS REAL QUESTION WHERE OUR INTUITION FAILS US}.
It is tempting to
say that we have a functor
$$ \MF(\bfX\times \cZ\times \cZ'\times \bfY,W'-W)\longrightarrow\DGcp(F_w\times_{T^*(\bfX\times \bfY)} F_{w'}).$$
It is not clear to the authors how one could construct such a functor in a canonical way. One option here is
to use the functor \(\mathcal{H}om\), if we fix some element \(\calF\in \MF(\bfX\times \cZ\times \cZ'\times \bfY,W'-W)\); then
we obtain a functor
\[\mathcal{H}om(\calF,\cdot)\colon \MF(\bfX\times \cZ\times \cZ'\times \bfY,W'-W)\rightarrow \DGcp (\bfX\times \cZ\times \cZ'\times \bfY).\]
However, it is not clear whether we can make a choice of \(\calF\) canonical.

%\section{Quiver varieties and generalized Lagrangian correspondences}

\subsection{Equivariant version of the 3-category $\thcm$}
\label{sec:equiv-vers-three}

If $\bfX$ is an affine variety with an action of $\xG$ and $\rmTs\bfX$, then an object in the 2-category $\catB^\xG(\rmTs\bfX)$
is a pair $(\cZ,\xW)$, where the \auv\ $\cZ$ is an affine variety with the action of $\xG$, while $\xW$ is a \sptn: $\xW\in \CC[\bfX\times\cZ]^{\xG}$.

Let us introduce an equivariant version %\(\thcm_{\mathrm{grp}}\)
of 3-category \(\thcm\) that has as objects pairs:
\[\mathrm{Obj}(\thcm)=\{(\bfX,G)| G \mbox{ is an algebraic group acting on } X\}.\]

%Thus \(\bfX\) is an
%object of \(3\)-category \(\thc(G)\) as in \eqref{eq:thrcat-gl-gl}.

%The \(3\)-categories \(\thc(G)\) are morphism spaces of a \(4\)-category \(\mathbf{BGrp}\) which has algebraic groups as objects. Respectively, the morphisms between  algebraic groups \(G,H\) form a \(3\)-category:
%\[\mathbf{H}\mathrm{om}(G,H)=\thc(G\times H).\]

Respectively, for \(\bfX\) with action of \(G\) and \(\bfY\) with action of \(H\) the space of morphisms \(\HHom((\bfX,G),(\bfY,H))\) consists of
an the pairs \((\cZ,\xW)\), \(\xW\in \CC[\bfX\times \cZ\times\bfY]^{G\times H} \) and \(\cZ\) has an action of \(G\times H\).

For a pair \((\cZ^1,\xW^1),(\cZ^2,\xW^2)\in \HHom(\bfX,\bfY)\) we define the category of morphisms as category of \(G\times H\)-equivariant matrix
factorizations.
\[
\Hom\bigl( (\cZ^1,\xW^1),(\cZ^2,\xW^2) \bigr) =
\MF_{G\times H}\bigl(\bfX\times\cZ^1\times\cZ^2\times \bfY, \xW^2 - \xW^1\bigr).
\]

In most of the cases we omit the group from the notation, on the other hand if we want to emphasize the presence of the group action we use notation
\((\mathbf{X},G)\) for the object with \(G\)-action. Similarly, sometimes we include the group in the notation of the composition of morphisms, that is
\((\bfX,G),(\bfY,H),(\bfW,K)\in \thcm\) and \((\cZ_{xy},W_{xy})\in \HHom((\bfX,G),(\bfY,H))\), \((\cZ_{yw},W_{yx})\in \HHom((\bfY,H),(\bfW,K))\) then
\begin{equation}\label{eq:2comp}
(\cZ_{xw},W_{xw})=(\cZ_{yw},W_{yw})\circ_H(\cZ_{xy},W_{xy})=(\cZ_{yw}\times \bfY\times \cZ_{xy}/H,W_{yw}-W_{xy})\end{equation}
Furthermore, for
\begin{equation}\label{eq:FF}
\calF_{xy}^{12}\in \Hom((\cZ_{xy}^1,W_{xy}^1),(\cZ_{xy}^2,W_{xy}^2)),\quad
\calF_{yw}^{12}\in \Hom((\cZ_{yw}^1,W^1_{yw}),(\cZ_{yw}^2,W^2_{yw}))
\end{equation}
the vertical composition is defined by
\[\calF^{12}_{xw}=\calF^{12}_{yw}\circ_H\calF^{12}_{xy}:=(\pi_{YW}^*(\calF^{12}_{yw})\otimes \pi_{XY}^*(\calF^{12}_{xy}))^{H^2},\]
where \(\pi_{yw},\pi_{xy}\) are the projections from \((\bfX\times\cZ_{yw}^1\times \cZ_{yw}^2\times \bfY)\bigtimes (\bfY\times\cZ_{xy}^1\times\cZ^2_{xy}\times \bfW)\) to
the first and the last factors, respectively and
\begin{equation}\label{eq:HHom}
(\cZ_{xy}^i,W_{xy}^i)\in \HHom((\bfX,G),(\bfY,H)),\quad (\cZ_{yw}^i,W_{yw}^i)\in\HHom((\bfY,H),(\bfW,K)).
\end{equation}
% \(\bfX\times\cZ\times \bfY\) and \(\bfY\times (\cZ')^2\times \bfW\).

Similarly, we define the horizontal composition for \(\calF^{12}_{xy}\) as before  and
\begin{equation}\label{eq:F23}
  \calF^{23}_{xy}\in \Hom((\cZ_{xy}^2,W^2_{xy}),(\cZ_{xy}^3,W^3_{xy})),
\end{equation}
here we assume \eqref{eq:HHom}.
In more details, we have
\[\calF^{13}_{xy}=\calF_{xy}^{23}\star\calF^{12}_{xy}=
  \pi_{13*}(\pi_{12}^*(\calF_{xy}^{23})\otimes
  \pi_{23}^*(\calF_{xy}^{12})),\]
where \(\pi_{ij}:\bfX\times \cZ_1\times\cZ_2\times\cZ_3\times \bfY\to
\bfX\times \cZ_i\times \cZ_j\times \bfY\). In particular,
\[\catB^\xG(\rmTs\bfX)=\mathbb{H}\mathrm{om}((X,G),(\pt,\{1\})).\]

We often omit the group super-script when the group is clear from the context. Finally, let us observe that the vertical composition satisfies the exchange rule:

\begin{proposition}\label{prop:exchange}
  For any \((\bfX,G),(\bfY,H),(\bfW,K)\in\thcm\) and \((\cZ_{xy},W^i_{xy})\), \((\cZ_{yw},W^i_{yw})\) as in \eqref{eq:HHom} and
  \(\calF_{xy}^{12},\calF_{yw}^{12}\) as in \eqref{eq:FF}, \(\calF^{23}_{xy}\) as in \eqref{eq:F23} and
  \[\calF^{23}_{yw}\in \Hom((\cZ_{yw}^2,W_{yw}^2),(\cZ_{yw}^3,W_{yw}^3))\]
   there is a three-isomorphism:
   \[ E(\cZ_{xy}^\bullet;\cZ^*_{yw};\calF_{xy}^{\bullet*};\calF_{yw}^{\bullet*}):\quad (\calF^{12}_{yw}\circ\calF^{12}_{xy})\star (\calF_{yw}^{23}\circ\calF_{xy}^{23})\to (\calF_{yw}^{12}\star\calF_{yw}^{23})\circ (\calF_{xy}^{12}\star\calF_{xy}^{23}).\]
   Both sides of the last equation are functors between one-categories and
    the three-isomorphism \(E\) is a natural transformation  of between  these  functors.
\end{proposition}
\begin{proof}
  In the proof below we use the short hand notation \(\cZ^{i_1\dots i_k}_{\bullet\star}=\cZ^{i_1}_{\bullet\star}\times\dots\times \cZ^{i_k}_{\bullet\star}\). First we observe that
 \((\calF^{12}_{yw}\circ\calF^{12}_{xy})\star (\calF_{yw}^{23}\circ\calF_{xy}^{23})\) is computed with the  push-forwards and pull-backs along the maps in the diagram
  and taking \(H^2\)-quotient:
  \[\begin{tikzcd}[column sep=5pt]
      &&\bfX\times\cZ^{123}_{xy}\times\bfY^2\times\cZ^{123}_{yw}\times\bfW\arrow[ddd]\arrow[ddrr]\arrow[ddll]\arrow[drr,dashed]\arrow[dddrr,dashed]
      \arrow[dddll,dashed]\arrow[dll,dashed]&&\\
      \boxed{\bfX\times \cZ^{12}_{xy}\times\bfY}&&&& \boxed{\bfX\times\cZ^{23}_{xy}\times\bfY}\\
      \bfX\times\cZ^{12}_{xy}\times\bfY^2\times\cZ^{12}_{yw}\times\bfW\arrow[u]\arrow[d]&&&&\bfX\times\cZ^{23}_{xy}\times\bfY^2\times\cZ^{23}_{yw}\times \bfW\arrow[u]\arrow[d]\\
      \boxed{\bfY\times\cZ^{12}_{yw}\times\bfW}&&\bfX\times\cZ^{13}_{xy}\times\bfY^2\times\cZ^{13}_{yw}\times\bfW&&\boxed{\bfY\times \cZ^{23}_{yw}\times\bfW}
    \end{tikzcd}\]
  here all the maps are the natural projections. In more details,
  the matrix factorizations \(\calF^{ij}_{\bullet\star}\) are the matrix factorizations on the spaces in the boxes. Thus to obtain \((\calF^{12}_{yw}\circ\calF^{12}_{xy})\star (\calF_{yw}^{23}\circ\calF_{xy}^{23})\) we pull-back and push-forward these matrix  factorizations along the solid arrow until we reach the space in the
  center of the bottom row.

  The dashed arrow maps indicate the natural projections that make the diagram commute. By functoriality of the pull-back we can obtain  \((\calF^{12}_{yw}\circ\calF^{12}_{xy})\star (\calF_{yw}^{23}\circ\calF_{xy}^{23})\) by first pulling back along the dashed arrow and then pushing forward along the central down arrow.

  Similarly, we draw the diagram for the maps that participate in the construction of \( (\calF_{yw}^{12}\star\calF_{yw}^{23})\circ (\calF_{xy}^{12}\star\calF_{xy}^{23})\):
  \[\begin{tikzcd}[column sep=1pt]
           \boxed{\bfX\times \cZ^{12}_{xy}\times\bfY}&&\bigast\arrow[ll,dashed]\arrow[rr,dashed]\arrow[d,dashed]\arrow[rrdd,dashed]\arrow[ddll,dashed]\arrow[dll,dotted]\arrow[drr,dotted] && \boxed{\bfY\times\cZ^{12}_{yw}\times\bfW}\\
      \bfX\times\cZ^{123}_{xy}\times\bfY\arrow[u]\arrow[d]\arrow[dr]&&\bigstar \arrow[dr]\arrow[dl]&&\bfY\times\cZ^{123}_{yw}\times\bfW\arrow[u]\arrow[d]\arrow[dl]\\
      \boxed{\bfY\times\cZ^{23}_{xy}\times\bfW}&\bfX\times\cZ^{13}_{xy}\times\bfY&&\bfY\times\cZ^{13}_{yw}\times\bfW&\boxed{\bfY\times \cZ^{23}_{yw}\times\bfW}
    \end{tikzcd}\]
  here the maps are the natural projections and
  \[\bigstar=\bfX\times\cZ^{13}_{xy}\times\bfY^2\times\cZ^{13}_{yw}\times\bfW.\]

  We obtain \( (\calF_{yw}^{12}\star\calF_{yw}^{23})\circ (\calF_{xy}^{12}\star\calF_{xy}^{23})\) by the sequence of pull-backs and push-forwards that starts from the boxed spaces and ends at \(\bigstar\). By setting
  \[\bigast=\bfX\times\cZ^{123}_{xy}\times\bfY^2\times\cZ^{123}_{yw}\times\bfW\]
  we can complete the diagram to the commuting diagram where the dashed arrows are the natural projections.

  The dashed arrow maps are the same maps as in the previous diagram. Thus to complete our proof we need to show that \( (\calF_{yw}^{12}\star\calF_{yw}^{23})\circ (\calF_{xy}^{12}\star\calF_{xy}^{23})\) can be obtained by the pull-back along the dashed arrows followed with the push-forward along the vertical dashed arrow.

  The last statement follows from a repeated application the base change relation. Indeed,
  first we apply base change to the commuting squares that have as sides  the vertical dashed arrow and  the dotted arrows. Finally, we apply the functoriality of the pull-backs since the composition of the pull-backs along the vertical arrow and the dotted arrows are equal to the pull-back along the corresponding non-vertical dashed arrows.

  Each base change in the construction is facilitated by a particular invertible morphism between the matrix factorizations.
  The composition of these morphisms yields the three-morphism \(E(\cZ_{xy}^\bullet,\cZ^*_{yw};\calF_{xy}^{\bullet*},\calF_{yw}^{\bullet*})\). The base change morphism
  are  natural transformations. Hence \(E\) is a natural transformation.

\end{proof}

In the cases when the first group of arguments of \(E(\dots)\) is clear we abbreviate the notation as
\(E(;\calF_{xy}^{12},\calF_{xy}^{23};\calF_{yw}^{12},\calF_{yw}^{23})\).

% In the appendix, we show that the natural transformation
% \(E\) is monoidal in the following sense:

% \begin{proposition}\label{prop:monoidal-exchange}
%   For any \((\bfX,G),(\bfY,H),(\bfW,K)\in\thcm\) and \((\cZ_{xy},W^i_{xy})\), \((\cZ_{yw},W^i_{yw})\) as in \eqref{eq:HHom} and
%  \[\calF^{i,i+1}_{xy}\in \Hom((\cZ_{xy}^i,W_{xy}^i),(\cZ_{xy}^{i+1},W_{xy}^{i+1})),\quad\calF^{i,i+1}_{yw}\in \Hom((\cZ_{yw}^i,W_{yw}^i),(\cZ_{yw}^{i+1},W_{yw}^{i+1}))\]
 %  there are invertible natural transformations of functors
   %\[E(;\calF_{xy}^{12},\calF_{yw}^{12};\calF^{23}_{xy}\star\calF^{34}_{xy},\calF_{yw}^{23}\star\calF_{yw}^{34})\to
 %    E(;\calF_{xy}^{12},\calF_{yw}^{12};\calF^{23}_{xy},\calF_{yw}^{23})\star (\calF^{34}_{xy}\circ\calF^{34}_{yw}),
  % \]
      %\[E(;\calF_{xy}^{12},\calF_{yw}^{12};\calF^{23}_{xy}\star\calF^{34}_{xy},\calF_{yw}^{23}\star\calF_{yw}^{34})\to
 %    (\calF^{23}_{xy}\circ\calF^{23}_{yw})\star E(;\calF_{xy}^{12},\calF_{yw}^{12};\calF^{34}_{xy},\calF_{yw}^{34}).\]

%\end{proposition}

Let us also discuss the exchange relation for two-morphisms and three-morphisms. Let us fix the objects \((\bfX,G),(\bfY,K)\) and
let \begin{equation}\label{eq:FGH}
(\cZ^i,W^i)\in \HHom(\bfX,\bfY),\quad \calF^{ii+1},\calG^{ii+1},\calH^{ii+1}\in \Hom((\cZ^i,W^i),(\cZ^{i+1},W^{i+1})).\end{equation}
Respectively, we define \(\calF^{ii+2}=\calF^{ii+1}\star\calF^{i+1,i+2}\), \(\calG^{ii+2}=\calG^{ii+1}\star\calG^{i+1,i+2}.\)

Let us also fix notation for the three-morphisms \begin{equation}\label{eq:uu}
u_{fg}^{ii+1}\in \cHom(\calF^{ii+1},\calG^{ii+1}),\quad u_{gh}^{ii+1}\in \cHom(\calG^{ii+1},\calH^{ii+1}).\end{equation}
We  denote by \(\bullet\) for the composition of morphisms in the category of matrix  factorizations, for example
\(u_{gh}^{12}\bullet u^{12}_{fg}\in \cHom(\calF^{12},\calH^{12})\).

On the other hand  we can define the horizontal composition \(u_{fg}^{23}\star u_{fg}^{12}\in
\cHom(\calF^{13},\calG^{13})\) by
\begin{equation}\label{eq:star-mor} u_{fg}^{23}\star u_{fg}^{12}=\pi_{13*}(\calG^{12}\otimes\pi_{23}^*(u_{fg}^{23})\bullet\pi_{12}^*(u_{fg}^{12})\otimes \calF^{23}),\end{equation}
\[\pi_{12}^*(u_{fg}^{12})\otimes\calF^{23}=\pi_{12}^*(u_{fg}^{12})\otimes \mathrm{Id}_{\pi_{23}^*(\calF^{23})} \in \cHom(\pi_{12}^*(\calF^{12})\otimes \pi_{23}^*(\calF^{23}),\pi_{12}^*(\calG^{12})\otimes\pi_{23}^*(\calF^{23})),\]
\[\calG^{12}\otimes\pi_{23}^*(u_{fg}^{23})=\mathrm{Id}_{\pi^*_{12}(\calG^{12})}\otimes \pi_{23}^*(u_{fg}^{23})
  \in \cHom(\pi_{12}^*(\calG^{12})\otimes \pi_{23}^*(\calF^{23}),\pi_{12}^*(\calG^{12})\otimes\pi_{23}^*(\calG^{23})).\]

The exchange rule in this setting is given by
\begin{proposition}\label{prop:exch-low}
  In notations of \eqref{eq:FGH},\eqref{eq:uu}\eqref{eq:star-mor} we have
  \[(u_{gh}^{23}\star u_{gh}^{12})\bullet(u_{fg}^{23}\star u_{fg}^{12})=
  (u_{gh}^{23}\bullet u_{fg}^{23})\star (u_{gh}^{12}\bullet u_{fg}^{12}).\]
\end{proposition}
 \begin{proof}
   To show that  the morphisms of the matrix factorizations are equal it is enough to check that the morphisms are equal as morphisms of stalks at a point
   \(z\in\bfX\times \cZ^1\times \cZ^3\times \bfY\). The convolution \(\star\) intertwines  \(i^*_{(x,z_1,y)}\times i^*_{(x,z_3,y)}\) with \(i^*_z\) where \(z=(x,z_1,z_3,y)\).
   Thus it is enough to show the statement in the case \(\bfX=\bfY=\cZ_1=\cZ_3=\pt\). In this case the statement of the theorem is equivalent to the functoriality of the
   push-forward.

   Indeed, in this situation \(\calF^{12},\calG^{12},\calH^{12}\in \MF(\cZ^2,W)\), \(\calF^{23},\calG^{23},\calH^{23}\in \MF(\cZ^2,-W)\).  Respectively, the convolutions
   \(\calF^{12}\star\calF^{23}=H^*(\calF^{12}\otimes\calF^{23})\),    \(\calG^{12}\star\calG^{23}=H^*(\calG^{12}\otimes\calG^{23})\),
   \(\calH^{12}\star\calH^{23}=H^*(\calH^{12}\otimes\calH^{23})\)  are the derived push-forwards \(R\pi_*\) to the point, \(R\pi_*:\cZ\to \pt\).

   Since the derived push-forward is a functor, the convolutions of the endomorphism are defined by means of functoriality
   \(u_{gh}^{23}\star u_{gh}^{12}=R\pi_*(u_{gh}^{23}\otimes u_{gh}^{12})\),
   \(u_{fg}^{23}\star u_{fg}^{12}=R\pi_*(u_{fg}^{23}\otimes u_{fg}^{12})\),  \((u_{gh}^{23}\bullet u_{fg}^{23})\star (u_{gh}^{12}\bullet u_{fg}^{112})=R\pi_*((u_{gh}^{23}\bullet u_{fg}^{23})\otimes (u_{gh}^{12}\bullet u_{fg}^{112}))\). Thus the statement of the proposition is equivalent to the relation:
   \[ R\pi_*(u_{gh}^{23}\otimes u_{gh}^{12})\bullet  R\pi_*(u_{fg}^{23}\otimes u_{fg}^{12})=R\pi_*((u_{gh}^{23}\bullet u_{fg}^{23})\otimes (u_{gh}^{12}\bullet u_{fg}^{112}))\]
   which follows from the functoriality of \(R\pi_*\).
   \end{proof}
% \subsection{Exchange rule}
%\label{sec:exchange-rule}

% %We postpone the most detailed discussion of the categorical structures till later section. In this subsection we discuss the most elementary part of
% the structure, structure of two category on \(\HHom(\bfX,\bfY)\), \(\bfX,\bfY\in \thcm\).
% The "vertical" composition
% \[\calF\circ\calF'\in \Hom(\cZ\circ \cZ',\mathscr{W}\circ\mathscr{W}'),
%   \quad \calF\in \Hom(\cZ,\mathscr{W}),\calF'\in\Hom(\cZ',\mathscr{W}'),\]
% is crucial part of the structure of 2-category and we define it by
%\[\calF\circ\calF=p_{\cZ,\mathscr{W}}^*\]

% The pairs $(\cX,\xG)$ are objects of a 3-category $\catBh$, the morphisms between two pairs forming the 2-category of (generalized) Lagrangian correspondences:
%\[
%\Hom_{\catBh}\bigl( (\cX_1,\xG_1),(\cX_2,\xG_2)\bigr) =
%\catB^{\xG_1\times\xG_2}
%\bigl(\brmTs\cX_1\times\rmTs\cX_2)\bigr),
%\bigl(\rmTs(\cX_1\times\cX_2)\bigr).
%\]
%where $\brmTs\cX$ denotes the cotangent bundle endowed with the opposite symplectic form.

\subsection{Categories of linear quivers and NS5 interfaces}
\label{sec:categ-line-quiv}

In this paper we consider the subcategories $\catBh(\tAmf)$ (resp. $\catBh(\Amf)$) of $\catBh$ formed by Nakajima quiver varieties of (Dynkin diagram) type $\tAmf$ (resp. $\Amf$) and describe some \gLcs\ connecting them. These correspondences are of two distinct types: NS5 and D4 (or D5). We will describe mostly the NS5 correspondences ~\eqref{eq:ns5sq} for $\nsm=1$.  Their \auvs\ originate from arrows similar to the ones appearing in Nakajima quivers, which could be expected given that both come from NS5 branes in the \sstr\ theory. We will also conjecture the D4 Lagrangian correspondences connecting the so-called \cmvrs, that is, Nakajima varieties of the $\tAz$ quiver without framing~\eqref{eq:d4sq}.

%a pair $(\cZ,\xW)$, where $\cZ$ is an affine variety with the action of $\xG$ and $\xW\in\IC[\cX\times\cZ]$.

%An object there was generalized Lagrangian subvariety, that is,

%\subsubsection{Generalized Lagrangian correspondences between cotangent bundles}
%Let $\cX$ be an affine algebraic variety with the action of an algebraic group $\xG$.
%The pair $(\cX,\xG)$ determines the symplectic variety $\rmTs\cX$ with the hamiltonian action of $\xG$.
%Following~\cite{KapustinSaulinaRozansky09}, ~\cite{KapustinRozansky10} we consider the associated 2-category $\catB^{\xG}(\rmTs\cX)$ of (generalized) Lagrangian subvarieties. An object of this 2-category is a pair $(\cZ,\xW)$, where $\cZ$ is an affine variety with the aciton of $\xG$ and $\xW\in \CC[\cX\times\cZ]^{\xG}$. The category of morphisms between two such objects is the category of $\xG$-equivariant matrix factorizations of $\xW_2-\xW_1$ over $\cX\times\cZ_1\times\cZ_2$:
%\[
%\Hom_{\catB^{\xG}(\rmTs\cX)}\bigl( (\cZ_1,\xW_1),(\cZ_2,\xW_2) \bigr) =
%\bMF^{\xG}\bigl(\cX\times\cZ_1\times\cZ_2, \xW_2 - \xW_1\bigr).
%\]
%
%The pairs $(\cX,\xG)$ are objects of a 3-category $\catBh$, the morphisms between two pairs forming the 2-category of (generalized) Lagrangian correspondences:
%\[
%\Hom_{\catBh}\bigl( (\cX_1,\xG_1),(\cX_2,\xG_2)\bigr) =
%\catB^{\xG_1\times\xG_2}\bigl(\rmTs(\cX_1\times\cX_2)\bigr).
%\]
%

\subsubsection{NS5 interfaces between $\tAm$ quiver varieties}
The $\tAm$ quiver has the form
\[
\begin{tikzpicture}[baseline=-0.15cm]
\draw[very thick, -] (0,0)-- (2,0);
\draw[very thick,-] (2,0) -- (2.75,0);
\draw[very thick,dashed,-] (-0.75,0) to [out=180, in = 90] (-1.5,-0.75) to [out=-90,in=180] (-0.75,-1.5)
to [out=0,in=180] (2.75,-1.5) to [out=0,in=-90] (3.5,-0.75) to [out=90,in=0] (2.75,0);
\draw[very thick,-] (0,0) -- (0,1.25);
\draw[very thick,-] (2,0) -- (2,1.25);
\draw[very thick,fill=white] (0,0) circle [radius=0.15];
\draw[very thick, -] (-0.75,0) -- (0,0);
\node[below] at (0,-0.15) {$n_i$};
\node[below] at (2,-0.15) {$n_{i+1}$};
\node[above] at (0,1.55) {$r_i$};
\node[above] at (2,1.55) {$r_{i+1}$};
\draw[very thick,fill=white] (0,0) circle [radius=0.15];
\draw[very thick,fill=white] (2,0) circle [radius=0.15];
\draw[very thick,fill=white] (-0.15,1.25) rectangle ++(0.3,0.3);
\draw[very thick,fill=white] (1.85,1.25) rectangle ++(0.3,0.3);
\end{tikzpicture}
\]
where $i = 1,\ldots,\nsm$. It is `colored' by the numbers $\bqn = (\qn_1,\ldots,\qn_{\nsm})$ and $\bqr  = (\qr_1,\ldots\qr_{\nsm})$. We denote this
quiver by \(\tilde{A}_m(\bqn;\bqr)\).
An edge between the $i$-th and $(i+1)$-st circles corresponds to the $i$-th basic, in the sense of section~\ref{sec:phys}, NS5 interface,  the number $\qn_i$ refers to the gauge group $\rU(\qn_i)$ between the $(i-1)$-st and $i$-th NS5-interfaces, and there are $\qr_i$ D5-interfaces there.
%between the $(i-1)$-st and $i$-th NS5 interfaces.

\begin{remark}
A `linear' $A_m$ quiver appears as a particular case of the $\tAm$ quiver if one sets $\qn_\nsm=\qr_\nsm = 0$.
\end{remark}

In order to describe additional interfaces as Lagrangian correspondences, we have to present the quiver varieties as cotangent bundles. Hence we
choose the orientation of quiver edges (for example, clockwise and from circles to boxes) and denote the corresponding maps as $X_i$ and $v_i$:
\[
\begin{tikzpicture}[baseline=-0.15cm]
\draw[very thick, ->] (0,0)-- (1.85,0);
\draw[very thick,->] (2,0) -- (2.75,0);
\draw[very thick,dashed,-] (-0.75,0) to [out=180, in = 90] (-1.5,-0.75) to [out=-90,in=180] (-0.75,-1.5)
to [out=0,in=180] (2.75,-1.5) to [out=0,in=-90] (3.5,-0.75) to [out=90,in=0] (2.75,0);
\draw[very thick,->] (0,0) -- (0,1.25);
\draw[very thick,->] (2,0) -- (2,1.25);
\draw[very thick,fill=white] (0,0) circle [radius=0.15];
\draw[very thick, ->] (-0.75,0) -- (-0.15,0);
\node[below] at (0,-0.15) {$n_i$};
\node[below] at (2,-0.15) {$n_{i+1}$};
\node[above] at (0,1.55) {$r_i$};
\node[above] at (2,1.55) {$r_{i+1}$};
\draw[very thick,fill=white] (0,0) circle [radius=0.15];
\draw[very thick,fill=white] (2,0) circle [radius=0.15];
\draw[very thick,fill=white] (-0.15,1.25) rectangle ++(0.3,0.3);
\draw[very thick,fill=white] (1.85,1.25) rectangle ++(0.3,0.3);
\node[above] at (1,0) {$X_i$};
\node[left] at (0,0.625) {$v_i$};
\node[right] at (2,0.625) {$v_{i+1}$};
\end{tikzpicture}
\]

 The quiver variety $\cXsmrn$ is the stable part of the cotangent bundle
\[
\cXsmrn = (\rmTs\cXrn)^{st},\qquad\cXrn = \left(\prod_{i=1}^{\nsm} \Hom(\CC^{\qn_i},\CC^{\qn_{i+1}}) \right)\times
 \left(\prod_{i=1}^{\nsm} \Hom(\CC^{\qn_i},\CC^{\qr_i})\right).
\]
The stable locus is defined by means  the hamiltonian action of the group
\[
\xGbn = \prod_{i=1}^{\nsm} \GLv{\qn_i}.
\]

In more details, given a character of \(\chi =(\chi_1,\dots,\chi_m)\) of \(\xGbn\) the \(\chi\)-stable locus of \(\rmTs\cXrn\) is the projection of the
GIT stable locus of \(\rmTs\cXrn \times \CC^*_{\chi_1}\times\dots\times\CC^*_{\chi_m} \),
see for example \cite[Section 3.1]{Nakajima99}. The \(\chi\)-stable condition can also be
described by explicit linear algebra construction as explained in \cite[Section 3.1]{Nakajima99}, we also use the linear algebra construction in proposition~\ref{prop:comp12} below.
If \(\bqr\) is  zero,  we do not impose any stability constraints.

Consider two affine quivers $Q=\tilde{A}_m(\bqn;\bqr)$ and $Q'=\tilde{A}_m(\bqn';\bqr')$ with the same $\nsm$ but, generally, different dimensions: $(\bqn;\bqr)$ and $(\bqn';\bqr')$. The NS5-interface auxiliary variety $\cZ$ has an arrow description, that is, it is based on the spaces of linear maps. First of all, we introduce the maps $\xphi_i\in\Hom(\CC^{\qn_i},\CC^{\qn'_i})$ and $\yphi_i\in
\Hom(\CC^{\qr_i},\CC^{\qr'_i})$. We \emph{fix} the values of all $\yphi_i$ and require that they all have the highest rank. Then for each `horizontal square'
\begin{equation}
\label{eq:dgsq1}
\begin{tikzpicture}[baseline=1.5cm]
\draw[very thick,->] (0,0.15) -- (0,2.85);
\draw[very thick,->] (3,0.15) -- (3,2.85);
\node[left] at (0,1.5) {$X_i$};
\node[right] at (3,1.5) {$X'_i$};
\draw[->] (0.15,0) -- (2.85,0);
\draw[->] (0.15,3) -- (2.85,3);
\node[below] at (1.5,0) {$\xphi_i$};
\node[above] at (1.5,3) {$\xphi_{i+1}$};
\draw[->] (2.85,2.85) -- (0.15,0.15);
\node[above] at (1.4,1.5) {$\xpsi_i$};
\draw[thick,fill=white] (0,0) circle [radius=0.15];
\draw[thick,fill=white] (3,0) circle [radius=0.15];
\draw[thick,fill=white] (0,3) circle [radius=0.15];
\draw[thick,fill=white] (3,3) circle [radius=0.15];
\node[left] at (-0.15,0) {$\qn_i$};
\node[left] at (-0.15,3) {$\qn_{i+1}$};
\node[right] at (3.15,0) {$\qn'_i$};
\node[right] at (3.15,3) {$\qn'_{i+1}$};
\end{tikzpicture}
\qquad \text{or}\qquad
\begin{tikzpicture}[baseline=1.5cm]
\draw[very thick,->] (0,0.15) -- (0,2.85);
\draw[very thick,->] (3,0.15) -- (3,2.85);
\node[left] at (0,1.5) {$X_i$};
\node[right] at (3,1.5) {$X'_i$};
\draw[->] (0.15,0) -- (2.85,0);
\draw[->] (0.15,3) -- (2.85,3);
\node[below] at (1.5,0) {$\xphi_i$};
\node[above] at (1.5,3) {$\xphi_{i+1}$};
\draw[->]  (2.85,0.15) -- (0.15,2.85);
\node[above] at (1.65,1.5) {$\xpsi_i$};
\draw[->] (0.15,2.85) to  [out=-65,in=65] (0.15,0.15);
\draw[->] (2.85,2.85) to  [out=-115,in=115] (2.85,0.15);
\node at (0.8,1.5) {$\tX_i$};
\node at (2.2,1.5) {$\tX'_i$};
\draw[thick,fill=white] (0,0) circle [radius=0.15];
\draw[thick,fill=white] (3,0) circle [radius=0.15];
\draw[thick,fill=white] (0,3) circle [radius=0.15];
\draw[thick,fill=white] (3,3) circle [radius=0.15];
\node[left] at (-0.15,0) {$\qn_i$};
\node[left] at (-0.15,3) {$\qn_{i+1}$};
\node[right] at (3.15,0) {$\qn'_i$};
\node[right] at (3.15,3) {$\qn'_{i+1}$};
\end{tikzpicture}
\end{equation}
and for each `vertical square'
\begin{equation}
\label{eq:dgsq2}
\begin{tikzpicture}[baseline=1.5cm]
\draw[very thick,->] (0,0.15) -- (0,2.85);
\draw[very thick,->] (3,0.15) -- (3,2.85);
\node[left] at (0,1.5) {$\qvc_i$};
\node[right] at (3,1.5) {$\qvc'_i$};
\draw[->] (0.15,0) -- (2.85,0);
\draw[->] (0.15,3) -- (2.85,3);
\node[below] at (1.5,0) {$\xphi_i$};
\node[above] at (1.5,3) {$\yphi_{i}$};
\draw[->] (2.85,2.85) -- (0.15,0.15);
\node[above] at (1.4,1.5) {$\xfph_i$};
\draw[thick,fill=white] (0,0) circle [radius=0.15];
\draw[thick,fill=white] (3,0) circle [radius=0.15];
\draw[thick,fill=white] (-0.15,2.85) rectangle ++(0.3,0.3);
\draw[thick,fill=white] (2.85,2.85) rectangle ++(0.3,0.3);
%\draw[thick,fill=white] (0,3) circle [radius=0.15];
%\draw[thick,fill=white] (3,3) circle [radius=0.15];
\node[left] at (-0.15,0) {$\qn_i$};
\node[left] at (-0.15,3) {$\qr_{i}$};
\node[right] at (3.15,0) {$\qn'_i$};
\node[right] at (3.15,3) {$\qr'_{i}$};
\end{tikzpicture}
\qquad \text{or}\qquad
\begin{tikzpicture}[baseline=1.5cm]
\draw[very thick,->] (0,0.15) -- (0,2.85);
\draw[very thick,->] (3,0.15) -- (3,2.85);
\node[left] at (0,1.5) {$\qvc_i$};
\node[right] at (3,1.5) {$\qvc'_i$};
\draw[->] (0.15,0) -- (2.85,0);
\draw[->] (0.15,3) -- (2.85,3);
\node[below] at (1.5,0) {$\xphi_i$};
\node[above] at (1.5,3) {$\yphi_{i}$};
\draw[->]  (2.85,0.15) -- (0.15,2.85);
\node[above] at (1.65,1.5) {$\xfph_i$};
\draw[->] (0.15,2.85) to  [out=-65,in=65] (0.15,0.15);
\draw[->] (2.85,2.85) to  [out=-115,in=115] (2.85,0.15);
\node at (0.8,1.5) {$\tqvc_i$};
\node at (2.2,1.5) {$\tqvc'_i$};
\draw[thick,fill=white] (0,0) circle [radius=0.15];
\draw[thick,fill=white] (3,0) circle [radius=0.15];
\draw[thick,fill=white] (-0.15,2.85) rectangle ++(0.3,0.3);
\draw[thick,fill=white] (2.85,2.85) rectangle ++(0.3,0.3);
\node[left] at (-0.15,0) {$\qn_i$};
\node[left] at (-0.15,3) {$\qr_{i}$};
\node[right] at (3.15,0) {$\qn'_i$};
\node[right] at (3.15,3) {$\qr'_{i}$};
\node[above] at (-1,2) {$\xfph$};
\end{tikzpicture}
\end{equation}.

Two columns of the formulas~\eqref{eq:dgsq1} and \eqref{eq:dgsq2} are Legendre dual to each other as we explain in section~\ref{sbs:lfs}.
In the same section we explain these two types of interfaces correspond to the interface between a horizontal NS5 brane and vertical NS5 and NS5' brane.
The figure~\eqref{pic:vert-hor-squares} explains how the vertical and horizontal squares a glued together.

In more details, we choose either the first diagonal (on the left) or the second diagonal (on the right) and add the corresponding linear maps $\xpsi_i$, $\xfph_i$ and, in case of the second diagonal choice we also add maps $\tX_i$, $\tX'_i$, $\tqvc_i$ and $\tqvc'_i$. The variety $\cZ$ is formed by all these maps, see the figure~\eqref{pic:vert-hor-squares}, including  $\xphi_i$, while the superpotential $\xW$ is the sum of superpotentials $\xWsqr$ of individual squares. The latter being the %(alternating)
sums of traces along the boundaries of triangles and digons.  The formula below list these sums for the squares from the formulas~\eqref{eq:dgsq1} and \eqref{eq:dgsq2}:
\begin{equation*}
%\label{eq:dgsq2}
\xWsqr = \begin{cases}
\Tr ( \xpsi_i X'_i\xphi_i) - \Tr( \xpsi_i \xphi_{i+1} X_i),
\\
\bigl(\Tr (X'_i \tX'_i) - \Tr (\tX'_i\xphi_{i+1}\xpsi_i)\bigr) - \bigl(\Tr(\tX_i X_i) - \Tr(\tX_i\xpsi_i \xphi_i)\bigr)
\\
\Tr( \xfph_i\qvc'_i\xphi_i)-\Tr(\xfph_i\yphi_i\qvc_i )
\\
\bigl(\Tr (\tqvc'_i\qvc'_i) - \Tr(\xfph_i\tqvc'_i\yphi_i)\bigr) -
\bigl(\Tr(\tqvc_i\qvc_i) - \Tr(\xfph_i\xphi_i\qvc_i) \bigr)
\end{cases}
\end{equation*}
We get more interface squares by swapping the quivers $Q$ and $Q'$ in the diagrams~\eqref{eq:dgsq1} and~\eqref{eq:dgsq2}, that is, reversing the directions of horizontal arrows and adjusting the choices of diagonals.

Not all of these choices are compatible with stability conditions on the arrows of the quivers, and some of these choices are Legendre-equivalent if we replace the spaces at the quiver vertices by their dual spaces and, at the same time, reverse the directions of all arrows. For example, the generalized Lagrangian correspondence of the second diagram of~\eqref{eq:dgsq2} can be equivalently presented, if we reverse the directions of vertical arrows:
\[
\begin{tikzpicture}[baseline=1.5cm]
\draw[very thick,<-] (0,0.15) -- (0,2.85);
\draw[very thick,<-] (3,0.15) -- (3,2.85);
\node[left] at (0,1.5) {$\tqvc_i$};
\node[right] at (3,1.5) {$\tqvc'_i$};
\draw[->] (0.15,0) -- (2.85,0);
\draw[->] (0.15,3) -- (2.85,3);
\node[below] at (1.5,0) {$\xphi_i$};
\node[above] at (1.5,3) {$\yphi_{i}$};
\draw[->]  (2.85,0.15) -- (0.15,2.85);
\node[above] at (1.65,1.5) {$\xfph_i$};
%\draw[->] (0.15,2.85) to  [out=-65,in=65] (0.15,0.15);
%\draw[->] (2.85,2.85) to  [out=-115,in=115] (2.85,0.15);
%\node at (0.8,1.5) {$\tqvc_i$};
%\node at (2.2,1.5) {$\tqvc'_i$};
\draw[thick,fill=white] (0,0) circle [radius=0.15];
\draw[thick,fill=white] (3,0) circle [radius=0.15];
\draw[thick,fill=white] (-0.15,2.85) rectangle ++(0.3,0.3);
\draw[thick,fill=white] (2.85,2.85) rectangle ++(0.3,0.3);
\node[left] at (-0.15,0) {$\qn_i$};
\node[left] at (-0.15,3) {$\qr_{i}$};
\node[right] at (3.15,0) {$\qn'_i$};
\node[right] at (3.15,3) {$\qr'_{i}$};
\end{tikzpicture}
\]

%Note that switching the choice of diagonal in a square corresponds to applying the Legendre transform to its generalized Lagrangian.

%Note that the opposite diagonal interface is the Legendre transform of the original diagonal interface.

In our previous work \cite{OblomkovRozansky16},\cite{OblomkovRozansky18},\cite{OblomkovRozansky18a}
and in the section~\ref{sec:defects-knot-invar} we work with the last choice  of
the direction of circle-square edge. In the rest of this section we work with
convention of formulas \eqref{eq:dgsq1}, \eqref{eq:dgsq2} since this choice is closer the standard quiver description of Grassmannians.

\begin{remark}
\label{rm:nakj}
If $\bqr' = \bqr$ while $\bqn\leq \bqn'$, that is, $\qn_i \leq \qn_i'$ for all $i$,  and one requires all horizontal arrow maps to have highest rank, then the first correspondences of~\eqref{eq:dgsq1} and~\eqref{eq:dgsq2} describe the `standard' interface which has a simple symplectic interpretation. Vertical arrow maps describe representations of $Q$ and $Q'$, the horizontal arrows establish the representation of $Q$  as a subrepresentation of $Q'$, and one takes the conormal bundle to this arrangement within the product of two full quiver varieties. These correspondences are
also known under the name {\it Hecke correspondences}. The theory of Hecke correspondences was developed by Nakajima, please consult \cite{Nakajima17a} for a survey and further references. 
\end{remark}

We consider several special cases the special cases of NS5 interfaces between
Grassmanians  in section~\ref{sec:Grass-interf}. More generally,  the NS5 interfaces are discussed in section~\ref{sec:ns5-inst}. Also  we discuss D5 and NS5 interfaces
between the commuting varieties as described in section~\ref{sec:ns5d5}.

\section{Categorical aspects of the theory.}\label{sec:cats}
% \label{sec:stacks-our-main}
% Even the small category \(\thc_{\tman}\) seems to be too big to be useful for some concrete mathematical problems.
% Besides, the category of matrix factorizations on general variety is overly complicated. So we can first restrict our
% attention to the subcategory of \(\thc_{\tman}\) where the the objects are affine %varieties.

% A slight issue
% with our choice is that the affine varieties have overly simple category of coherent sheaves and because of that
% we would not get very interesting structure of the composition product in our category. One way to mend this is
% to enhance our initial definition and to introduce equivariant matrix factorizations.  The equivariant matrix factorizations
% are two-morphism  in the enlarged category \(\thc_{\tman}^{\tstack}\) where the objects are Artin stacks with affine
% stabilizers.

In this section we discuss the categorical setting for the proposed TQFT. In particular,
we explain how de-looping procedure allows  us to state a precise mathematical statement that underlies a construction of the TQFT.
The delooping procedure conjecturally produces a  monoidal 2-category \(\catB(Q)\) for a  quiver \(Q\).

\subsection{Delooping}
\label{sec:delooping}

In the previous section discussed the general setting for a construction of the 3-category \(\thc\) as well as the simplest equivariant version of the
3-category \(\thc(Q)\) for a linear quiver \(Q\). It is natural to conjecture that \(\thc\) and \(\thc(Q)\) are weak 3-categories \cite{GarnerGursky09}.
However, checking the conditions for the structural morphisms of a weak 3-category is a rather challenging task. We choose a different path for constructing
\(\thc(Q)\). First, we define a delooped monoidal 2-category \(\catB(Q)\), then we
define a large 3-category \(\underline{\thc}(Q)\) as category of module 2-categories over \(\catB(B)\). The 3-category \(\thc(Q)\) is a subcategory
inside \(\underline{\thc}(Q)\).

% As a warm for this challenge we indicate a consequence
% of the axioms of weak 3-category. This consequence predicts existence of a semi-strict monoidal 2-category \(\catB(Q)\) that we discuss below.

To construct  2-category \(\catB(Q)\) we first reduce the 3-category \(\thc(Q)\) to a 3-category with one object \(\mathbb{O}_Q=\oplus_{\mathbf{n},\mathbf{r}}Q(\mathbf{n};\mathbf{r})\). Respectively, the category of the endomorphisms \(\mathbb{H}om(\mathbb{O}_Q,\mathbb{O}_Q)\)
of \(\mathbb{O}_Q\) has objects that are collections of elements  \[(\cZ_{\mathbf{n};\mathbf{r}},W_{\mathbf{n};\mathbf{r}})\in
\mathbb{H}om(Q(\mathbf{n};\mathbf{r}),Q(\phi(\mathbf{n});\psi(\mathbf{r}))),\quad \mathbf{n},\mathbf{r}\in \ZZ_+^Q,\quad \phi,\psi: \ZZ_+^Q\to\ZZ_+^Q.\]
The composition law for the morphisms in \(\thc(Q)\) yields a monoidal structure
on \(\mathbb{H}om(\mathbb{O}_Q,\mathbb{O}_Q)\).

Now we declare that the 2-category \(\catB(Q)\) has elements of \(\mathbb{H}om(\mathbb{O}_Q,\mathbb{O}_Q)\) as objects and the rest of the structure is inherited from the
3-category \(\thc(Q)\). Thus we state our main categorical statement:

\begin{theorem}\label{thm:mono-two-cat}
  The delooped 2-category \(\catB(Q)\) has a structure of a  monoidal 2-category (without a unit) in the sense of Kapranov-Voevodsky \cite{KapranovVoevodsky94a}, \cite{BaezNeuchl96}.
\end{theorem}

The essence of the monoidal 2-category is its monoidal structure and the corresponding coherence conditions that are imposed by the compatibility with the monoidal
structure. If we ignore the monoidal structure  we essentially put ourself in the standard setting similar to the one considered in \cite{CaldararuWillerton07}.

\begin{proposition}\label{prop:2cat-no-monoid}
  The vertical composition  \(\star\) and the horizontal composition \(\bullet\) of \(\mathcal{H}om\) functors define a structure of a weak 2-category on \(\catB(Q)\).
\end{proposition}

% To complete the construction  of the 2-category in the proposition we need to define the composition \(\calF\circ \calF'\in \Hom(\cX\circ \cX',\cY\circ\cY')\)
% for \(\calF\in \Hom(\cX,\cY)\), \(\calF'\in \Hom(\cX',\cY')\). We

\begin{proof}
  It is enough to show that \(\HHom(Q(\mathbf{n},\mathbf{r}),Q(\mathbf{n}',\mathbf{r}'))\) is a weak 2-category for a fixed choice
  \(\mathbf{n},\mathbf{r},\mathbf{n}',\mathbf{r}'\).
  For a given \((\scZ,W)\in \HHom(Q(\mathbf{n},\mathbf{r}),Q(\mathbf{n}',\mathbf{r}'))\) there is a canonical choice of the convolution unit:
  \[\mathbf{1}^{\scZ}\in \Hom((\scZ,W),(\scZ,W))=\MF_{G(\mathbf{n})\times G(\mathbf{n}')}(Q(\mathbf{n},\mathbf{r})\times\scZ^2\times Q(\mathbf{n}',\mathbf{r}'),W^{(1)}-W^{(2)}),\]
  where \(W^{(1)}=\pi^*_i(W)\) is the pull-back of the potential \(W\) along two natural projections to \(Q(\mathbf{n},\mathbf{r})\times \scZ\times Q(\mathbf{n}',\mathbf{r}')\). The unit is given by the push-forward of the structure sheaf along the inclusion   \(1\times \Delta\times 1\) of
  \(Q(\mathbf{n},\mathbf{r})\times \scZ\times Q(\mathbf{n}',\mathbf{r}')\) into \(Q(\mathbf{n},\mathbf{r})\times \scZ\times \scZ\times Q(\mathbf{n}',\mathbf{r}')\), here \(\Delta\) is the diagonal inclusion.

  To show that \(\mathbf{1}^{\scZ}\) is the convolution unit we need to use the base change transformation we need to use the base change relations in the
  commuting diagram of maps:
  \[\begin{tikzcd}
    \scZ^3\arrow[d,"\pi_{23}"]\arrow[r,"\Delta\times 1\times 1"]& \scZ^4\arrow[d,"\pi_{124}"]&\arrow[l,"\pi_{12}\times\pi_{23}"]\arrow[d,"\pi_{13}"]\scZ^3\\
    \scZ^2\arrow[r,"\Delta\times 1"]&\scZ^3&\scZ^2
  \end{tikzcd},\]
here \(\pi_{i_1\dots i_s}\) is the projection on the corresponding product of copies of \(\scZ\).
Thus we have \begin{multline*}
  \mathbf{1}^\scZ\star \calF=\pi_{13*}\circ (\pi_{12}\times\pi_{23})^*\circ(\Delta\times 1\times 1)_*\circ\pi_{23}^*(\calF)=\\
  \pi_{13*}\circ (\pi_{12}\times\pi_{23})^*\circ \pi_{124}^*\circ (\Delta\times 1)_*(\calF)=\pi_{13*}\circ (\Delta\times 1)_*(\calF)=\calF.\end{multline*}
Here the second equality follow from the base-change along the commuting square and the last two equalities follow from
\(\pi_{124}\circ\pi_{12}\times \pi_{23}=1\) and \(\pi_{13}\circ (\Delta\times 1)=1\). Since we use the base-change we obtain an invertible natural transformation
\(U_l(\scZ)\) between the functor of the left multiplication by \(\mathbf{1}^{\scZ}\) and the identity functor. Similarly, one defines the natural transformation
\(U_r(\scZ)\) for the right multiplication.

  The first group of axioms of the weak 2-category that is concerned with the interaction of the vertical composition \(\star\) and horizontal composition \(\circ\) is
  exactly the interchange relation from proposition~\ref{prop:exchange}.

  Our 2-category is weak because in the standard proof of the associativity
  of the convolution product \(\star\) one has to use the base change relation. Thus the base change natural transformations yield the associator natural
  transformation:
  \[A(\calF,\calG,\calH): \left(\calF\star\calG\right)\star\calH\to \calF\star\left(\calG\star\calH\right). \]

  The associator in a weak two category satisfies the pentagon axiom and the unitors \(U_l,U_r\) satisfy the triangle axioms see for example
  \cite{EtingofGelakiNikshychOstrik15}. The construction of the associator and the unitors is in terms of base-change relations. Thus we can use the standard
  agrument for convolution of coherent sheaves to show the axioms in our setting.
\end{proof}

We refer to the original paper \cite{KapranovVoevodsky94a} and subsequent paper \cite{BaezNeuchl96} for a streamlined treatment of the (semi-strict) monoidal 2-category.
% It somewhat straight-forward to define a strict monoidal 2-category but we do not expect the categories appearing in Kapustin-Rozansky-Saulina theory to be equivalent
% to a strict monoidal 2-category \cite{KapustinRozansky10}. It is natural to expect the 2-category \(\catB(Q)\) to have a structure of  weak monodial 2-category as
% described in \cite{KapranovVoevodsky94a}. However as indicated in the last paper any weak monoidal 2-category is equivalent to a semi-strict monoidal 2-category.
% {\bf NEED TO CORRECT THIS}The semistrict monoidal 2-category becomes a strict 2-category if we forget the monoidal structure. Also the monoidal structure in the semistrict monoidal 2-category
% is strictly associative.
In our case,  the composition operation \(\circ\) is strictly associative since the equivariant pull-backs in the definition of the composition in \eqref{eq:2comp}.

Thus the only nontrivial structural two-morphism \(R=R(\calF,\calG)\) of our  2-category is the two-isomorphism
\(\circ_{\calF,\calG}\) that relates the morphisms of pairs of  objects \(\calF\in\Hom(\cZ,\cZ')\), \(\calG\in\Hom(\cX,\cX')\) as in the diagram below:
\[\begin{tikzcd}
  \cZ\circ\cX\arrow[r,"1_{\cZ}\circ \calG",""{name=U,below}]\arrow[d,"\calF\circ 1_{\cX}"',""{name=D}]&\arrow[d,"\calF\circ 1_{\cX'}"]\cZ\circ\cX'\\
  \cZ'\circ\cX\arrow[r,"1_{\cZ'}\circ\calG"']&\cZ'\circ\cX'
  \arrow[Rightarrow,from=U,to=D,"R"]
\end{tikzcd}
\]
%In the previous section we  provided the ingredients for a construction of \(R(\calF,\calG)\) in proposition~\ref{prop:exchange}. Indeed, we propose as candidate
We define  \(R(\calF,\calG)\) as  composition of exchange morphisms from
proposition~\ref{prop:exchange}:
\[R(\calF,\calG)=E(\cZ,\cZ,\cZ';\cX,\cX',\cX';\mathbf{1}^{\cZ}_{xy},\calF;\calG,\mathbf{1}^{\cX'}_{yz})\circ E^{-1}(\cZ,\cZ',\cZ';\cX,\cX,\cX';\calF,\mathbf{1}^{\cZ'}_{xy};\mathbf{1}^{\cX}_{yz},\calG),\]
where \(\mathbf{1}^{\mathscr{Y}}_{\star,*}\) stands for the identity element in \(\Hom(\mathscr{Y},\mathscr{Y})\).

\begin{proof}[Proof of theorem~\ref{thm:mono-two-cat}]
  The 2-category \(\catB(Q)\) is almost semi-strict. Indeed, the convolution \(\circ\) is strictly associative on the objects. In more details,
  in notations \cite{KapranovVoevodsky94a} \(\circ\) corresponds to the composition \(\otimes\). The structural one-morphisms that appear in the
  definition of the monoidal of \cite{KapranovVoevodsky94a} are  \(a_{A,B,C}: A\otimes (B\otimes C)\to (A\otimes B)\otimes C\), in notations of
  of \cite{KapranovVoevodsky94a}. The analogous one-morphisms in our setting are the identity morphisms. Respectively, the related analogs of two-morphisms
  \(a_{A,B,C,D},a_{u,A,B},a_{A,u,B},a_{A,B,u}\) are also identity in our setting.

  The analogs of the structural two-morphisms \(\otimes_{u,u',B}\), \(\otimes_{A,v,v'}\), \(\otimes_{u,v}\) in our setting are expressed in terms
  of the exchange natural-transformation \(E\) from proposition~\ref{prop:exchange}:
  \[\begin{aligned}
    &\otimes_{u,u',B}& &R(\calF,\calG;\scX)=E(\scZ,\scZ',\scZ'';\scX,\scX;\calF,\calG;\mathbf{1}^{\scX}_{xy},\mathbf{1}^{\scX}_{yw})\\
    &\otimes_{A,v,v'}& &R(\scZ;\calF,\calG)=E(\scZ,\scZ,\scZ;\scX,\scX',\scX''; \mathbf{1}^{\scZ}_{xy},\mathbf{1}^{\scZ}_{yw};\calF,\calG)\\
    &\otimes_{u,v} & & R(\calF,\calG).
  \end{aligned}\]
where we set \(u=\calF,u'=\calG,B=\scX\) for the first relation,
\(A=\calZ,v=\calF,v'=\calG\) for the second relation and \(u=\calF,v=\calG\)
for the last relation. We do not repeat definitions of the Kapranov-Voevodsky tensor products
from the left column of the last three formulas, see the original text for the definition. Our formulas give a definition of
the operators \(R(\calF,\calG;\scX),R(\scZ;\calF,\calG),R(\calF,\calG)\) as well as
reference to the relevant operation from \cite{KapranovVoevodsky94a}.
We also translate Kapranov-Voevodsky axioms to our language in our argument below.

The rest  non-trivial two-morphisms in our setting are structural morphisms of the 2-category from proposition \ref{prop:2cat-no-monoid}. One can apply the  method of MacLane strictness theorem to turn the 2-categories of 1-morphisms into the strict 2-categories, thus we suppress associator and unitor two-morphism in the discussion below.

The axioms that need to be checked for the structural two-morphisms \(R(\bullet,\bullet;\bullet)\),\(R(\bullet,\bullet)\), are on the pages 221 and
222 of \cite{KapranovVoevodsky94a}.
% These axioms follow almost immediately from the naturality of the exchange two-morphism and proposition~\ref{prop:monoidal-exchange}.
Let us first explain how  one can prove the relations on the page 222 of \cite{KapranovVoevodsky94a} which are also the essential
relations of the semi-strict monoidal 2-category (see Lemma 4 in \cite{BaezNeuchl96}). The rest of the axioms can be check by a similar computation.

The relation from the top of the page 221 of \cite{KapranovVoevodsky94a} as well as relation  (viii) in Lemma 4 of \cite{BaezNeuchl96} can be read of the commuting diagram:
\[\begin{tikzcd}
    \scX\circ \scZ\arrow[rr,dotted,"(\calF\star\calF')\circ \mathbf{1}^{\scZ}"]\arrow[dd,dotted,"\mathbf{1}^{\scX}\circ\calG"']\arrow[rd,"\calF\circ\mathbf{1}^{\scZ}"]&&\scX''\circ\scZ\arrow[dd,dotted,"\mathbf{1}^{\scX''}\circ\calG"]\arrow[ld,"\calF'\circ\mathbf{1}^{\scZ}"]\\ & \scX'\circ\scZ\arrow[dd,near start,"\mathbf{1}^{\scX'}\circ\calG"']& \\  \scX\circ\scZ'\arrow[rd,dotted,"\calF\circ\mathbf{1}^{\scZ'}"'] \arrow[rr,dashed, near end,"(\calF\star\calF')\circ\mathbf{1}^{\scZ'}"]&&\scX''\circ\scZ'\\
    &\scX'\circ\scZ'\arrow[ru,dotted,"\calF'\circ \mathbf{1}^{\scZ'}"']&
  \end{tikzcd}
\]
here \(\calF\in \Hom(\scX,\scX'),\calF'\in \Hom(\scX',\scX''),\calG\in \Hom(\scZ,\scZ')\) and \(\scZ,\scZ'\in\HHom(\bfY,\mathbf{Z})\)
\(\scX,\scX',\scX''\in \HHom(\bfX,\bfY)\), we omit spaces \(\bfX,\bfY,\mathbf{Z}\) from our notations.

There are two dotted paths that connect \(\scX\circ\scZ\) to \(\scX''\circ\scZ'\). The relation that  we are after is the equality of two compositions of two-morphisms that relate these two paths. In more details, the first sequences of two-morphisms is:
\[(\scX\circ\scZ\to \scX\circ\scZ'\to\scX'\circ\scZ'\to\scX'\circ\scZ'')\xRightarrow{R(\calF,\calF';\calZ')}(\scX\circ\scZ\to\scX\circ\scZ'\to\scX''\circ\scZ'),\]
\[(\scX\circ\scZ\to\scX\circ\scZ'\to\scX''\circ\scZ')\xRightarrow{R(\calF\star\calF',\calG)}(\scX\circ\scZ\to\scX''\circ\scZ'\to\scX''\circ\scZ'). \]
The second sequence of two-morphisms is
\[(\scX\circ\scZ\to \scX\circ\scZ'\to\scX'\circ\scZ'\to\scX'\circ\scZ'')\xRightarrow{R(\calF,\calG)}(\scX\circ\scZ\to\scX'\circ\scZ\to\scX'\circ\scZ'\to\scX''\circ\scZ'),\]
\[(\scX\circ\scZ\to\scX'\circ\scZ\to\scX'\circ\scZ'\to\scX''\circ\scZ')\xRightarrow{R(\calG,\calF')}(\scX\circ\scZ\to\scX'\circ\scZ\to\scX''\circ\scZ'\to\scX''\circ\scZ'),\]
\[(\scX\circ\scZ\to\scX'\circ\scZ\to\scX''\circ\scZ\to\scX''\circ\scZ')\xRightarrow{R(\calF,\calF';\scZ)} (\scX\circ\scZ\to\scX''\circ\scZ'\to\scX''\circ\scZ').\]

The two morphisms in the sequences are compositions of the base-change two-morphisms of some geometric maps.  Let us discuss  sequences of two-morphism in details. There is  a sequence of maps \(g_{i+1/2;k}\), \(k=1,\dots,r\) between spaces \(S_{i;k}\), \(g_{i+1/2;k}:S_{i+(-\epsilon_{i;k}+1)/2}\to S_{i+(\epsilon_{i;k}+1)/2} \),
\(\epsilon_{i;k}=\pm 1\) and the functors
\(F_k=f_1\circ\dots\circ f_{m_k}\) where \(f_i=g_{i+1/2;k*}\) if \(\epsilon_{i;k}=1\) and \(f_i=g_{i+1/2;k}^*\) if \(\epsilon_{i;k}=-1\) such that
\begin{enumerate}
\item \(F_1: (\calF,\calF',\calG)\mapsto ((\mathbf{1}^\scX\circ\calG)\star(\calF\circ\mathbf{1}^{\scZ'}))\star \calF'\circ\mathbf{1}^{\scZ'}\),
\item There is \(s>1\) such that \(F_s: (\calF,\calF',\calG)\mapsto ((\mathbf{1}^\scX\circ\calG)\star(\calF\star\calF'\circ\mathbf{1}^{\scZ'}))\),
\item \(F_r: (\calF,\calF',\calG)\mapsto ((\calF\star\calF'\circ\mathbf{1}^{\scZ'})\star (\mathbf{1}^\scX\circ\calG))\),
\item \(F_{k+1}\) is obtained from \(F_k\) either by the base-change morphism or composition of some the push-forwards or or pull-backs in the
  sequence \(f_{i;k}\).
\item the corresponding sequence of natural transformations connecting \(F_1\)  with \(F_s\) is \(R(\calF,\calF';\scZ)\) and the sequence connecting
  \(F_s\) and \(F_r\) is \(R(\calF\star\calF',\calG)\).
\end{enumerate}

Similarly, there is a sequence of maps \(h_{i+1/2;k}\), \(\epsilon'_{i;k}=\pm 1\) and spaces \(S'_{i;k}\)  \(k=1,\dots,l\) that realizes the sequence of
the two-morphisms \(R(\calF,\calG)\), \(R(\calG,\calF')\), \(R(\calF,\calF';\scZ)\). Moreover,  \(h_{i+1/2;k}=g_{i+1/2;k'}\), \(\epsilon'_{i;k}=\epsilon_{i;k'}\)
for \(k=k'=1\) and \(k=r,k'=l\).

Thus we can paste the sequence of the maps \(h_{i+1/2;k}\) and the sequence \(g_{i+1/2;k}\) along the sequences
\(g_{i+1/2;1}\) and \(g_{i+1/2;r}\). One half of the pasted diagram yields the compositions of the natural transformations \(R(\calF,\calF';\scZ)\) and
\(R(\calF\star\calF',\calG)\) and the other half the composition of  \(R(\calF,\calG)\), \(R(\calG,\calF')\), \(R(\calF,\calF';\scZ)\). The commutativity
of the diagram of maps implies the equality of two compositions of the natural transformations.

The next group of the relations on the page 221 of \cite{KapranovVoevodsky94a} are equivalent to the naturality of \(R(\calF,\calG)\), in our case the
naturality is automatic since \(R(\calF,\calG)\) is a composition of natural transformations of base change. The last group of relation  relation on that page
follows immediately from the exchange relation from proposition~\ref{prop:exch-low}.

The first group of relations on the page 222 of \cite{KapranovVoevodsky94a} is equivalent to naturality of \(R(\calF,\calG;\scZ)\). Finally, the last relation
on this page is based on the commuting diagram:
\[\begin{tikzcd}
    &\scX'''\circ\scZ&\\
    \scX\circ\scZ\arrow[ru,dotted,"\calF\star\calF'\star\calF''\circ \mathbf{1}"]\arrow[rr,dashed,near end,"\calF\star\calF'\circ\mathbf{1}"]\arrow[rd,dotted,"\calF\circ\mathbf{1}"']&&\scX''\circ\scZ\arrow[lu,dotted,"\calF''\circ\mathbf{1}"']\\
    &\scX'\circ\scZ\arrow[uu,near start,"\calF'\star\calF''\circ\mathbf{1}"]\arrow[ur,dotted,"\calF'\circ 1"']&
  \end{tikzcd}.
\]

There are two compositions of the dotted arrow that start at \(\scX\circ\scZ\)
and end at \(\scX'''\circ\scZ\). Respectively, there are two ways to connect these compositions by the two-morphisms which suppose to result into the same
two-morphism:
\[R(\calF,\calF'\star\calF'';\scZ)\circ R(\calF',\calF'',\scZ)=
  R(\calF\star\calF',\calF'';\scZ)\circ R(\calF,\calF';\scZ).\]
The last equality follows from the pasting argument as in the case discussed at the beginning of the proof.
\end{proof}

% Thus proving the conjecture is equivalent to checking the \(2\)-braiding relations \cite{KapranovVoevodsky94a} for the proposed two-morphism \(\circ_{\calF,\calG}\).
% We postpone this tedious computation for the next publication and instead
% concentrate on the smaller subcategory \(\thc_{\tgl}\) where the objects are the adjoint quotient stacks \(\gl_n/\GL_n.\)

\subsection{Module category construction of \(\thc(Q)\)}\label{sec:modular-cat-def}

Let us define \(\underline{\thc}(Q)\) to be a 3-category of 2-categories with the two-monoidal action of \(\catB(Q)\). This 3-category is too big and
the 3-category \(\thc(Q)\) is a sub 3-category inside \(\underline{\thc}(Q)\). Indeed, the product of 2-categories
\[\mathbb{M}_{\bfn,\mathbf{r}}=\prod_{\bfn',\mathbf{r}'}\HHom(Q(\bfn,\mathbf{r}),Q(\bfn',\mathbf{r}'))\]
is a 2-category with the monoidal action of \(\catB(Q)\), that is an object in \(\underline{\thc}(Q)\).

Respectively, the 2-category \(\HHom(Q(\bfn,\mathbf{r}),Q(\mathbf{m},\mathbf{s}))\) is a sub 2-category
of 2-category of morphisms from \(\mathbb{M}_{\mathbf{n},\mathbf{r}}\) to \(\mathbb{M}_{\mathbf{m},\mathbf{s}}\).
Thus we define the 3-category \(\thc(Q)\) to be the  3-category %of \(\underline{\thc}(Q)\)
with objects
\(\mathbb{M}_{\bfn,\mathbf{r}}\) and the morphisms between these objects as above.

\subsection{The case \(Q=\tilde{A}_0\)}\label{sec:monoidal-cat}

In the case of general quiver \(Q\) it is not immediately clear how to construct a unit in the monoidal 2-category \(\catB(Q)\). However, in the case
\(Q=\tilde{A}_0\) there is a natural candidate for the unit.
\[\mathbb{L}_{\tid}=\{\mathbb{L}_{\tid}^{n,r}\}_{n,r\ge 0},\quad\mathbb{L}^{n,r}_{\tid}\in \HHom(\tilde{A}_0(n;r),\tilde{A}_0(n;r)),\]
the one-morphism \(\mathbb{L}^{n;r}_{\tid}\) is defined in the next section where we provide a proof for
\begin{theorem}\label{thm:unit}
  The 2-category \(\catB(Q)\), \(Q=\tilde{A}_0\) is a monoidal 2-category with a unit \(\mathbb{L}_{\tid}\).
\end{theorem}

In the notations of the previous section
the 3-category \(\thc_{\tgl}\) is a subcategory of \(\thc(\tilde{A}_0)\) with
objects \(\tilde{A}_0(n,0)\), \(n\ge 0\). We also  define the framed version of the 3-category \(\thc_{\tgl}^{f}\) in the second half of this section.
The later category is a sub-category of \(\thc(\tilde{A}_0)\) with objects
\(\tilde{A}_0(n,1)\).
 Implicitly, this category was studied in \cite{OblomkovRozansky16}, \cite{OblomkovRozansky17},
 \cite{OblomkovRozansky17a}  where several results in theory of knot homology were derived.
 We spell out the details of the categories
 \(\thc_{\tgl}\) and \(\thc_{\tgl}^{f}\) and state some conjectures about these categories.
 %In particular, we explain how the knot theoretic results from
 %our previous work lead to a topological construction of the above mentioned braiding two-morphism.
% These
% papers also provide a motivation for the construction of the category \(\thc_{\tgl}\) in this section.
% In what follows, we do not use the language of stacks. Rather, we  work in a more elementary setting
% to emphasize the computational aspects of our theory.

\subsubsection{Objects and morphisms}
\label{sec:objects-morphisms}

The objects of $\thc_{\tgl}$ are labeled by $\ZZ_{\ge 0}$:
\[\mathrm{Obj}\,\thc_{\tgl}=\{\mathbf{n}|\mathbf{n}\in \ZZ_{\ge 0}\}.\]
The objects in the 2-category of morphisms are pairs $(Z,w)$, where $Z$ is an algebraic variety with an action of $\GL_n\times\GL_m$:
%%For the space of morphisms we have a big space of morphism
\[\tObj\,\HHom(\mathbf{n},\mathbf{m})= \{(Z,w),\quad w\in \CC[\gl_n\times Z \times \gl_m]^{\GL_n\times\GL_m}\}.\]
% here $Z$ is an algebraic variety  with an
%action of $\GL_n\times \GL_m$.

% The actual space of morphisms is the quotient of
% \(\HHom(\mathbf{n},\mathbf{m})\) by the equivalence relation \(\KN_V\)  as in (\eqref{eq:KN})  with
% the restriction that the vector bundle \(V\) is required to be \(\GL_n\times\GL_m\)-equivariant
%\[\HHom(\mathbf{n},\mathbf{m})'=\HHom(\mathbf{n},\mathbf{m})/\sim.\]

The composition of morphism $(Z,w)\in\HHom(\mathbf{n},\mathbf{m})$, $(Z',w')\in \HHom(\mathbf{m},\mathbf{k})$ is defined as
$$ (Z,w)\circ(Z',w') = (Z\times \gl_m\times Z'/_+\GL_m,w'+w)\in \HHom(\mathbf{n},\mathbf{k}).$$
Here the quotient is defined via GIT theory as follows. Suppose that $X$ is a variety with a $\GL_m$-action.
A character $\chi$ of $\GL_m$ determines the trivial line bundle  $L_\chi$
with $\GL_m$-equivariant  structure  defined by $\chi$.
Recall that a point $x\in X$ is semistable (with respect to $L_\chi$) if there is
$m>0$ and $s\in \Gamma(X,L_\chi)^{GL_m}$ such that $s(x)\ne 0$. Denote
$$ X/_\chi \GL_m:=X^{ss}/\GL_m.$$
Since the group of characters of $\GL_m$ is generated by $\det$, we introduce short-hand notations:
$$ X/_{\pm} \GL_m:=X/_{\det^{\pm 1}} \GL_m,\quad X/_0\GL_m:=X/_{\det^0}\GL_m.$$

Two-morphism are  the objects of  the corresponding category of equivariant matrix factorizations.
Given \((Z,w),(Z',w')\in  \HHom(\mathbf{n},\mathbf{m})\) we define:
\[\Hom\bigl((Z,w),(Z',w')\bigr)=\MF_{\GL_n\times\GL_m}(\gl_n\times Z\times Z'\times \gl_m,w-w').\]
The group \(\GL_n\times \GL_m\) is  reductive, hence the equivariance of  the matrix factorization  \((M,D)\) is
equivalent to the condition that the group action on \(M\) commutes with the differential \(D\).

The space of morphisms \(\mathcal{H}om(\cdot,\cdot)\)  between the equivariant matrix factorizations is defined
to be the space of morphisms between the underlying matrix factorizations that commute the group action.

\subsubsection{Framed version of the 3-category}
\label{sec:framed-version-three}

We enlarge slightly our category to include the framing. The objects of the new 3-category
\(\thc_{\gl}^{\tfrm}\) are again labeled by the positive integers:
\[\mathrm{Obj}(\thc_{\gl}^{\tfrm})=\{\mathbf{n}^{\tfrm}\,|\,n\in\ZZ_{\ge 0}\}.\]
For the space of morphisms we have  \[\HHom(\mathbf{n}^{\tfrm},\mathbf{m}^{\tfrm})=(Z,w),\quad w\in \CC[V_n\times\gl_n\times Z\times\gl_m\times V_m]^{\GL_n\times\GL_m}\] here $Z$ is an algebraic variety  with an
action of $\GL_n\times \GL_m$ and \(V_n=\CC^n,V_m=\CC^m\) with the standard \(\GL_n\) and \(\GL_m\) actions.

The rest of definitions are identical to the constructions from the previous subsection, after we replace
\(\gl_n\) with \(V_n\times \gl_n\). For brevity we introduce the following shorthand notation:
\[\gl_n^{\tfrm}:=\gl_n\times V_n.\]
In general, many definitions in our paper are parallel in framed and unframed cases, in the cases when the
definitions are parallel we use \(\bullet\) notation to indicate that \(\bullet \) could be "f" or empty set.

%\subsection{2-Categories}
%\label{sec:2-categories}

%The 3-category axioms are quite involved and we do not want to dive into the long adventure of checking that
%our construction satisfies them. Instead, we discuss the two-categorical level of our construction.

First of all, note that the category \(\HHom(\mathbf{n}^\bl,\mathbf{m}^\bl)\) has a natural monoidal structure.
Given \(\cf\in \Hom((Z,w),(Z',w'))\) and \(\cg\in \Hom((Z',w'),(Z'',w''))\) we define
\(\cf\star\cg\in \Hom((Z,w),(Z'',w''))\)
\[\cf\star\cg:=\pi_{*}(\cf\otimes\cg),\quad \pi\colon \gl_n\ti Z\ti Z'\ti Z''\ti\gl_m\to \gl_n\ti Z\ti  Z''\ti\gl_m \]

\begin{remark}
  The 3-categories \(\thc_{\gl},\thc_{\gl}^f\) can be realized in terms of 4-category \(\BGL\) by
  assigning to \(\mathbf{n}\) the morphisms
  \[\gl_n\in \mathbf{H}\mathrm{om}(\GL(n),\GL(0)),\quad \gl_n\times V_n\in \mathbf{H}\mathrm{om}(\GL(n),\GL(0)),\]
  respectively.
  That is \(\thc_{\gl}=\thc(\tilde{A}_0(0))\),   \(\thc_{\gl}^f=\thc(\tilde{A}_0(1))\) in
  notations of section~\ref{sec:compact}.
\end{remark}

\subsection{Monoidal properties of \(\thc_{\gl}^\bullet\)}

The objects of \(\thc_{\gl}^\bullet\) are labeled by the positive numbers and it seems natural to use the addition for the monoidal structure on the level of objects.
It is much harder to extend this monoidal structure to \(1\), \(2\), \(3\)-morphisms. Nevertheless, we conjecture that it is possible to construct such extension. Below, we provide detailed motivation for the

\begin{conjecture}
  The 3-category \(\thc_{\gl}^\bullet\) can be enhanced to a monoidal 3-category.
\end{conjecture}

Let us introduce the auxiliary object that facilitates the extension of monoidal structure to level of \(1\)-morphisms.
 The object is the pair
\[\mathbb{I}_{n,k;n+k}=(T^*\GL_{n+k}, W_{n,k;n+k}),\]
\[  W_{n,k;n+k}(X,z,X')=\Tr(X \mu_{\gl_{n+k}}(z))-\Tr(X'\mu_{\mathfrak{p}_{n,k}})\in \CC[\gl_{n+k}\times T^*\GL_{n+k}\times \mathfrak{p}_{n,k}]^{\GL_{n+k}\times P_{n,k}},\]
here \(P_{n,k}\subset \GL_{n+k}\) is the standard parabolic subgroup with the Levi group
\(\GL_n\times \GL_k\), respectively \(\mathfrak{p}_{n,k}=\mathrm{Lie}(P_{n,k})\) and
\(\mu_{\gl_{n+k}}\) and \(\mu_{\mathfrak{p}_{n,k}}\) are the moment maps for the left and right actions of the corresponding groups.

  Given two objects \(\mathbb{O}_1\in \HHom(\bfm^\bullet_1,\bfn^\bullet)\), \(\mathbb{O}_2\in \HHom(\bfm^\bullet_2,\bfk^\bullet)\) we define the induced object
  \[\mathbb{O}_1\oplus\mathbb{O}_2:=\mathbb{O}_1\times\mathbb{O}_2\circ_{P_{n,k}}\mathbb{I}_{n,k;n+k}\in \HHom(\bfm^\bullet_1\times\bfm^\bullet_2,(\bfn+\bfk)^\bullet).\]

  Thus defined operation extends to the level of two-morphisms. Moreover, the exchange relation from proposition~\ref{prop:exchange} implies that this operation is monoidal with
  respect to the monoidal structure \(\star\). In the case \(m_1=m_2\) the above operation was studied in \cite{OblomkovRozansky16} where it is called induction
  functor and it was used to construct the braid realization discussed in the next section.

  We expect that the object \(\mathbb{I}_{n,k;n+k}\) will play an important role in the foam extension of our TQFT, this element is the element one needs for
  defining a value of the partition function on the configurations that contain three planes of defect intersecting along the line of defect. Also this element
  interacts well with \(\mathrm{NS5}\) defects discussed in the next section:
\begin{equation*}%\label{eq:T-conv}
\mathbb{I}_{l|n}\times \mathbb{O}\circ_{P_{n,k}}  \mathbb{I}_{n,k;n+k}=\mathbb{L}_{\tid}^l\times \mathbb{O}\circ_{P_{n,k}}\mathbb{I}_{l,k;l+k}\circ_{\GL_{n+k}}\mathbb{I}_{l+k| n+k},\quad l\le n,
\end{equation*}
where \(\mathbb{O}\in \HHom(\bfm^\bullet,\bfk^\bullet)\) is any one-morphism and \(\mathbb{I}_{l|n}\in \HHom(\mathbf{l}^\bullet,\bfn^\bullet)\) is NS5 type one-morphism
discussed in the next section.

\section{Defects and knot invariants}
\label{sec:defects-knot-invar}

In this section we explain how the we interpret the results of \cite{OblomkovRozansky16} in terms of 3-categories $\thc_{\tgl}$ and $\thc_{\tgl}^{f}$. In particular, we
make a connection with the theory of foams and provide an explanation for the Chern character construction. We also construct a partition functions  \(\sfZ^{\mathrm{f}}\) for the KRS theories the targets \(\Hilb_n(\CC^2)\), \(n\in \ZZ_{\ge 0}\) with
NS5 type defects in \(S^2\times \RR\). In particular, we show that for a circle \(S^1\) inside of a connected component of
the defect complement \(S^2\times \RR\ \Def\) the partition function yields \(\sfZ^{\mathrm{f}}(S^1)=\mathrm{D}^{\mathrm{per}}(\Hilb_n(\CC^2))\) where \(n\) is the label of the connected component.

Recall the standard setup of the 3D topological field theory with defects. A 3D QTFT is characterized by its partition-function evaluation  \(\sfZ\). The
partition function is an assignment:
\[\mbox{ closed connected three-manifold } X\mapsto \sfZ(X)\in\CC,\]
\[\mbox{ closed connected surface } S\mapsto \sfZ(S)\in\mathrm{Vect},\]\[\mbox{ three-manifold $X$ with boundary $\partial X = \bigcup_i S_i$} \mapsto \sfZ(X)\in \sfZ(\partial X) = \bigotimes_{i=1}^m \sfZ(S_i),\]
\[\mbox{ closed connected curve } C\mapsto \sfZ(C)\in \mathrm{Cat},\]\[ \mbox{ surface with boundary } S\mapsto \sfZ(S)\in \sfZ(\partial S ) = \otimes_{i=1}^k \sfZ(C_i),\]
\[\mbox{ point } p\mapsto \sfZ(p)\in \thcgb,\]
\[\mbox{ interval } I=[b,e]\mapsto \sfZ(I)\in \Hom\bigl(Z(b),Z(e)\bigr).\]

This collection of data behaves naturally under the gluing operation.
For example, suppose that a three-manifold $X$ without a boundary is cut into two pieces over a surface $S$:
\[X=X_1\cup X_2, \quad\partial(X_1)=S,\quad\partial(X_2)=S^{\vee}.\]
Then $S$ and $S^\vee$ have opposite orientations, hence $\sfZ(S)$ and $\sfZ(S^\vee)$ are dual vector spaces and
the partition function $\sfZ(X)$ is a pairing between their elements:
\[\sfZ(X)=\sfZ(X_1)\cdot \sfZ(X_2),\]
%where $\sfZ($
%
%
%For example, the value of \(\sfZ\) on the three-manifold $X$
%without a boundary
%\[X=X_1\cup X_2, \quad\partial(X_1)=S,\quad\partial(X_2)=S^{\vee}\] could be computed from the values on two pieces \(X_1\). Indeed,
%\(S\) and \(S^\vee\) have opposite orientation thus \(\sfZ(S)^\vee=\sfZ(S^\vee)\) and the axioms of the QTFT imply that \[\sfZ(X)=\sfZ(X_1)\cdot \sfZ(X_2)\]
%where \(\cdot \) is the natural pairing between the dual spaces \(\sfZ(S)\) and \(\sfZ(S)^\vee\).
More generally, the formalism of TQFT provides a method
for computing values of \(\sfZ(Y)\) by cutting \(Y\) into pieces, evaluating \(\sfZ\) on the pieces and pairing them
in a standard way.
More details can be found in \cite{Lurie08}. To define a 3D \tqft, we need to include into the domain
of \(\sfZ\) also manifolds with corners and work with more subtle setting of \((\infty,k)\)-categories.
We postpone the discussion of such extension to our future publication \cite{OblomkovRozansky18d}.

Often \tqft s may be defined not only on smooth manifolds but also on `smooth' CW-complexes. In particular, a \tqft\ may have defects (coming from lower-dimensional cells). Topologically, defects are unions of embedded surfaces and curves. Surfaces and curves may intersect.
The cuts must be transverse to the defects. All other properties of \tqft\ without defects are preserved.

The  full  categories \(\thc_{\gl}^\bl \) will be used to construct 3D \tqft\ and we discuss the construction in the forthcoming publication
\cite{OblomkovRozansky18d}  where we construct the corresponding maps \(\sfZ\) and \(\sfZf\).
 %where we construct 3D \tqft s
%\(\sfZ\) and\(\sfZf\)
%with domains \(\thc_{\tgl}\) and \(\thcgf\), respectively.

In this note we concentrate on \(\RR\)-invariant part of the TQFT.
That is from now on all defects are invariant with respect to shift along last coordinate in \(S^2\ti \RR\) and thus we can recover a configuration
of defects from the two-dimensional cut which is an intersection with \(S^2\ti \{0\}\).

We present \(S^2\) as \(\mathbb{R}^2\) compactified by a point \(\infty\) at infinity and
we can assume that the defects do not contain \(\infty\).
The two-dimensional cut is \(\mathbb{R}^2\cup \infty\). Studying two-dimensional
slice is equivalent to restricting ourselves to the defects of the form \(C\ti \RR\)
where \(C\) is a curve in \(\RR^2\).

The surfaces of defect intersect our fixed \(\mathbb{R}^2\) along the union of oriented curves. Suppose that the curves on
the plane lies in an annulus and their union is a projection of the closure of a braid. By assigning a sign to each intersection, we obtain an interpretation
of the union of curves as a projection of a link in \(\mathbb{R}^3\) presented as a closure \(L(\beta)\) of a braid \(\beta\in \Br_n\). Denote
the plane with defect \(\mathbb{R}^2_{\beta}\).

From the results of \cite{OblomkovRozansky16} it follows, that our \tqft\  provides an isotopy invariant of \(L(\beta)\), namely, the vector space
\( \sfZf(\mathbb{R}^2_\beta),\) assigned to $\mathbb{R}^2_\beta$.
We show that this invariant  categorifies HOMFLYPT polynomial and coincides with previously defined Khovanov-Rozansky invariant \cite{OblomkovRozansky20}.

Now let us give details of the \(\mathbb{R}^2\)-sliced \tqft. A small neighborhood of the plane contains two-dimensional surfaces of defect that are
products of the defect  curves in \(\RR^2\) with an interval. The surfaces of defect divide \(\RR^3\) into connected components and each connected
component has an integer marking.
We choose the markings in such a way that if we move along the (oriented) intersection of a defect surface with $\RR^2$ and the marking on the left is $k$, then the marking on the right is $k+1$.
%The satisfies the following property, as move along the curve of intersection of a defect surface with \(\RR^2\), we
%have smaller number on the left and the difference of the numbers is \(1\).

We assume that the curves of intersection are compact, thus their union is contained in a large disc.
The marking of the disc complement (`the infinite marking') determines all other markings.%
%If we choose infinite marking, the one for the connected component
%containing the complement of the disk, then the markings of the rest of the connected components is defined by the orientations of the curves of intersection.

In this paper we only consider the theories with the infinite marking equal to \(0\). Hence, if the intersection of the surface defect and $\RR^2$ is
the braid closure then all markings are positive. Thus the picture of the braid \(\beta\)
determines the marking of our theory uniquely. Slightly abusing notation, we denote such data as \(\RR^2_\beta\). The figure is a slice \(\RR^2_\beta\)
for \(\beta=\sigma_1^3\). The closure of \(L(\beta)\) is a trefoil and we explain
below how we can compute the homology of this knot.

\begin{figure}
  \includegraphics[width=2in]{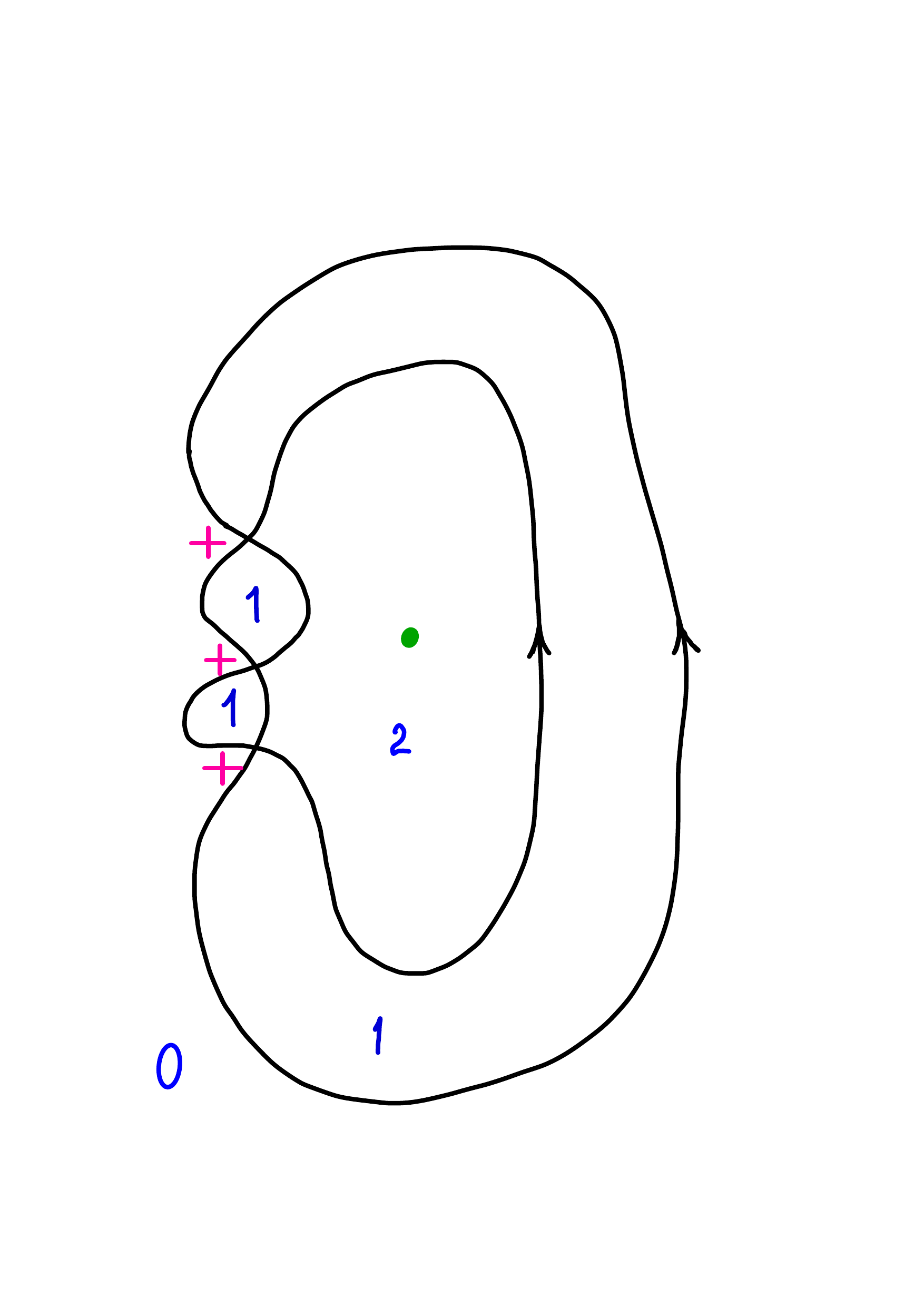}
  \caption{\(\mathbb{R}^2_\beta\) for \(\beta=\sigma_1^3\).}
  \label{pic:1}
\end{figure}

\subsection{Values on points and  defect trivial intervals}
\label{sec:valu-points-interv}
There is a canonical way to upgrade the marking of \(\RR^2_\beta\) to a categorical marking.
Denote by $\sfp_n$ a point lying inside a region marked by $n$. To a pair of points $(\sfpno,\sfpnt)$ we assign a \tctg\
\begin{equation}
\label{eq:mnass}
\sfZb(\sfpno,\sfpnt) = \HHom(\bfnbo,\bfnbt).
\end{equation}
Recall that an object of this \tctg\ is a pair $(Z,W)$, where $Z$ is a variety with a $(\GLno\times\GLnt)$-action and
$W \in\CC[\glno\times\glnt\times Z]^{\GLno\times\GLnt}$.

In accordance with the assignment~\eqref{eq:mnass},  to an interval $I$ connecting $\sfpno$ and $\sfpnt$ (and possibly crossing defect surfaces) we assign an object $\sfZb(I)$ of $\HHom(\bfnbo,\bfnbt)$, so that if $I$ is the result of gluing together the intervals $I_1$ and $I_2$ over the common middle point, then the object of $I$ is the composition:
\[\sfZb(I) = \sfZb(I_2)\circ\sfZb(I_1).\]
This relation implies that if the interval $I$ lies within a single region marked by $\sfpn$, then the corresponding object $\sfZb(I)$ is the identity with respect to the monoidal structure (\ie, the composition) of the category $\HHom(\bfnb,\bfnb)$.
%
%If a point $\sfp_n$ lies inside a connected region of marking $n$, then we assign to it the 2-category
%%First, we assign to a point inside of connected region of the
%%complement of the braid with marking \(n\) the 2-category
%\[\mathsf{Z}^\bullet(\mathsf{p}_n)=
%  \HHom(\mathbf{n}^\bl,\mathbf{0}^\bl).\]
%%here we use \(\pt_n\) for a point inside the \(n\)-marked region.
% To a point of defect we assign the 2-category
%\[\sfZ^\bl(\pt)=\bigcup_{m_i,m_j}\HHom(\mathbf{m}_i^\bl,\mathbf{m}_j^\bl)\] where the union is over the set of pairs marking in the neighborhood  of the point.
%

%Next we need explain our choice of assignment \(\sfZ(I)\) where \(I\) is an interval. If the interval lies within a connected component of the complement
%of the defect then \(\sfZ(I)\) is the identity.
%Let us discuss a geometric realization of the corresponding element.

Theorem~\ref{thm:unit} states that the following pair is the identity object of
 the `unframed' \tctg\ $\HHom(\bfn,\bfn)$:
%This identity object of the `unframed' \tctg\ $\HHom(\bfn,\bfn)$ is a pair:
\[\mathbb{L}^{n,0}_{\tid}=\mathrm{L}^n_{\tid}=\Bigl(\mathrm{T}^*\GL_n,W_{\tid}:=\Tr\,\phi\bigl(X-\Ad_g(X')\bigr)\Bigr)\]
in which $(g,\phi)\in \GLn\times\gln\cong \TsGLn$, while $(X, X')\in\gln\times\gln$, and the action of $\GLn\times\GLn$ on the total variety $\gln\times\gln\times\TsGLn$ is given by the following formula:
\[
(a,b)\cdot(X,X',g,\phi) = \bigl(\Adv{a}X,\Adv{b}Y,agb^{-1},\Adv{a}\phi\bigr)
\]
%
% of \(\HHom(\mathbf{n},\mathbf{n})\) has \(\GL_n\times \GL_n\)-equivariant structure defined by
%\[(a,b)\cdot (\Ad_a(X),\Ad_b(X'),agb^{-1},\Ad_a(\phi)),\]
%where \(X,X'\) are the coordinates \(\gl_n\times\gl_n\) and \(g,\phi\) are the coordinates on \(\mathrm{T}^*\GL_n\).
The identity object in the framed \tctg $\HHom(\bfnf,\bfnf)$ is the case \(r=1\) of
%
%Respectively, we define an element \(L_{id}^n\in \HHom(\mathbf{n}^f,\mathbf{n}^f)\)  as a pair
\[\mathbb{L}^{n,r}_{\tid}=\bigl(\mathrm{T}^*\GL_n\ti\Hom( V_n,\CC^r),W_{\tid}^{\tfrm}
%(X,v;X',v';g,\phi,w)
:=W_{id}(X,g,\phi,X')+\Tr(w\cdot(v-gv'))\bigr),\]
where $(X,v), (X,v')\in\gln\times \Hom(\CC^r,V_n)$, while $(g,\phi,w)\in\TsGLn\times  \Hom( V_n,\CC^r)$ and the action of $\GLn\times\GLn$ on framing related variables is
\[
(a,b)\cdot(v,v',w) = (av,bv',aw).
\]

\begin{proof}[Proof of theorem~\ref{thm:unit}]
  We need to define the left and right unitizing 1-morphisms \[l(\scX): \scX\to\mathbb{L}_{\tid}^{n;r}\circ \scX,\quad \scX\in\HHom(\tilde{A}_0(n,r),\tilde{A}_0(n',r')),\]
  \[r(\scX):  \scX\to\scX\circ\mathbb{L}_{\tid}^{n;r},\quad \scX\in\HHom(\tilde{A}_0(n',r'),\tilde{A}_0(n,r)).\]
  We the left unitizer to be   the tensor product of matrix factorizations
  \[l(\scX):=\mathbf{1}_{\scX}\circ \mathcal{KN},\quad \mathcal{KN}=[\phi,X-\Ad_g(X')]\otimes [w,v-gv'],\]
  where we use the notation from the definition of the unit object as well as the standard notations for the Koszul matrix factorization.
  The right unitizer is defined by the identical formula.

  The matrix factorization \(\mathcal{KN}|_{g=1}\) is the matrix factorization that yields the Kn\"orrer's equivalence of categories \cite{Knorrer}.
  The composition \(\circ\) in the product \(\mathbb{O}\circ\mathbb{L}_{\tid}\) involves taking quotient by the \(\GL_n\)-action. Since the left and
  right actions on \(\GL_n\) on \(T^*\GL_n\) is free, we can trade the \(\GL_n\)-quotient for the gauge fixing \(g=1\). Thus we are in the setting of
  the Kn\"orrer periodicity.
  Respectively, the inverse to the Kn\"orrer equivalence is composition of
  \(\mathrm{KN}^*\) and the duality functor.

  The only non-trivial structural two-morphisms that involves the unit object are

 \[\begin{tikzcd}
  \cX\arrow[r," \calF",""{name=U,below}]\arrow[d,"l(\cX)"',""{name=D}]&\arrow[d,"l(\cX')"]\cX'\\
  \mathbb{L}\circ\cX\arrow[r,"1_{\mathbb{L}}\circ\calF"']&\mathbb{L}\circ\cX'
  \arrow[Rightarrow,from=U,to=D,"l(\calF)"]
\end{tikzcd},\quad
\begin{tikzcd}
  \cX\arrow[r," \calF",""{name=U,below}]\arrow[d,"r(\cX)"',""{name=D}]&\arrow[d,"r(\cX')"]\cX'\\
  \cX\circ\mathbb{L}\arrow[r,"\calF\circ 1_{\mathbb{L}}"']&\cX'\circ\mathbb{L}
  \arrow[Rightarrow,from=U,to=D,"r(\calF)"]
\end{tikzcd},
\]
which are defined by
\[l(\calF)=R(\mathcal{KN},\calF),\quad r(\calF)=R(\calF,\mathcal{KN}).\]

Thus these structural two-morphisms are compositions of the base-change natural transformations. The relevant conditions for the structural morphism related to the unit object
of monoidal 2-category are listed on the page 224 of \cite{KapranovVoevodsky94a}.
Since the structural two-morphisms in our setting are of geometric origin one
prove the conditions by the pasting method used in theorem
\end{proof}

%The proposition below shows that this is a unit in our 2-category.

% %\begin{proposition}\label{prop:unit}
%   For any \(\mathrm{L}\in\Hom(\mathbf{n}^\bl,\mathbf{m}^\bl)\) we have
%  \[\rmL_{\tid}^m\circ \rmL\sim\rmL,\quad \rmL\circ\rmL_{\tid}^n\sim\rmL\]
%\end{proposition}
% %\begin{proof}
%   We prove the second equality in the unframed case since the framed case is similar.
%   If \[\rmL=\bigl(Z,W(z,X',X'')\bigr)\] with \(X',X''\) being the coordinate along \(\gl_n\)
%   and \(\gl_m\) then the potential of the composition is the sum
  %\[\Tr\,\phi(X-\Ad_gX')+W(z,X')\in\CC[  \rmT^*\GL_n\times Z\times\gl_n\times\gl_n\times\gl_m/\GLn]^{\GLm\times\GLn}.\]

  % The group \(\GL_n\) acts freely on itself hence the last quotient is
  % equal the product \(\gl_n\times Z\times\gl_n\times\gl_n\times\gl_m\) and the
  % potential on the quotient is obtain from the last potential by setting
  % \(g=1\). Finally, observe that the potential
  %\[\Tr\,\phi(X-X')\]
  %is quadratic and we use Knorrer periodicity \cite{Knorrer} to complete the proof.
%\end{proof}

It follows from the axioms of the monoidal 2-category (see for example \cite{BaezNeuchl96}) that the category of endomorphisms of unit object is a universal
Drinfeld center, in the following sense

\begin{corollary}\label{cor:Dr}
  The monoidal category \(\Hom(\mathbb{L}_{\tid}^{n;r},\mathbb{L}_{\tid}^{n;r})\) is
  a braided monoidal category. Moreover for any \(\scX\in \HHom(\tilde{A}_0(n;r),\tilde{A}_0(n';r'))\) there is a monoidal functor
  to the Drinfeld center of \(\Hom(\scX,\scX)\):
  \[\HC[\scX]: \Hom(\mathbb{L}_{\tid}^{n;r},\mathbb{L}_{\tid}^{n;r})\to Z(\Hom(\scX,\scX),\star).\]
\end{corollary}

In the special case of \(n'=0\) and \(r=r'=1\) and special choice of \(\scX\) this functor was studied in \cite{OblomkovRozansky18a}. The exact connection is explained in the next subsections.
In this work we also define a right adjoint functor
to the \(\HC\). We expect that the adjoint functor exists in more general setting:

\begin{conjecture}\label{conj:ch}
  For any  \(\scX\in \HHom(\tilde{A}_0(n;r),\tilde{A}_0(n';r'))\) there is  a
  functor
  \[\CH[\scX]:\Hom(\scX,\scX)\to\Hom(\mathbb{L}_{\tid}^{n;r},\mathbb{L}_{\tid}^{n;r}) \]
  that is the right adjoint to \(\HC[\scX]\) and is a categorical trace
  \[\CH[\scX](\calF\star\calG)=\CH[\scX](\calG\star\calF).\]
\end{conjecture}

\subsection{Value on the intervals intersecting defects}
\label{sec:value-interv-inters}

In this section we discuss the intersection of  small interval with the
three-dimensional trace of \(NS5\)-brane, as discussed in section~\ref{sec:ns5d5}.
We assume that \(I\) is a small interval intersecting a surface of defect in a smooth point. The surface of defect separates the regions with labels \(k\) and \(l\) as on the figure below:
\[\begin{tikzpicture}
    \filldraw[gray] (1,0) -- (1,2) -- (2,3) -- (2,1);
    \draw[thick] (1.6,1.6) -- (2.6,1.6);
    \draw[thick,dashed] (1.6,1.6) -- (1,1.6);
    \draw[thick] (1,1.6) -- (0.5,1.6);
    \node (L5) at (0,1.6) {\(k\)};
    \node (M5) at (3,1.6) {\(l\)};
\end{tikzpicture}
\]
Let us assume that the curve of defect separates regions with marking \(k \) and \(l\).
Denote by  \((\phi_{kl},\phi_{lk})\) the coordinates on \(\rmT^* \Hom(V_k,V_l)\), where
\(\phi_{kl}\in \Hom(V_k,V_l)\) and \(\phi_{lk}\in \Hom(V_l,V_k)\). Also denote by $v_l$ and $v_k$ the coordinates on $V_l$ and $V_k$ and denote by $X_l$ and $X_k$ the coordinates on $\gl_l $ and $\gl_k$. Now the object assigned to the interval $\vec{I}$ in the unframed category is a pair
\[ \sfZ(\vec{I})=\mathbb{I}_{k|l}=\bigl(\rmT^* \Hom(V_k,V_l),\quad W_{k,l}:=\Tr(X_k\phi_{kl}\phi_{lk})-\Tr(X_l\phi_{lk}\phi_{kl})\bigr).\]
In the context of the section \ref{sec:categ-line-quiv}  \(\mathbb{I}_{k|l}\) corresponds
to the NS5 defect of charge \(k-l\) with no framing i.e. \(r=0\).

In case of framed \tctgs,
if the interval starts  at \(k\) and ends in \(l\) and the shortest path from the head of the vector \(\vec{I}\) to the head of the vector of direction
of the defect goes clockwise, then we choose
\[ \sfZf(\vec{I})=\mathbb{I}^{f}_{k|l}=\bigl(\rmT^* \Hom(V_k,V_l)\times V_k^*,\quad W^f_{k,l}:=\Tr(X_k\phi_{kl}\phi_{lk})-\Tr(X_l\phi_{lk}\phi_{kl})+\Tr(\psi (v_k-\phi_{kl}v_l)\bigr).\]
%where \((\phi_{kl},\phi_{lk})\), \(\phi_{kl}\in \Hom(V_k,V_l)\) are coordinates on \(\rmT^* \Hom(V_k,V_l)\) and \(\psi\in V_k\), respectively
%\((X_k,v_k)\) and \((X_l,v_l)\) are the coordinates on \(\gl_k\times V_k\) and \(\gl_l\times V_l\).
In case of opposite orientation we set
%If the orientation of the intersection is opposite we set
\[ \sfZf(\vec{I})=\bigl(\rmT^* \Hom(V_k,V_l)\times V_l^*,\quad W^f_{k,l}:=\Tr(X_k\phi_{kl}\phi_{lk})-\Tr(X_l\phi_{lk}\phi_{kl})+\Tr(\psi (v_l-\phi_{lk}v_k)\bigr).\]

The three-dimensional picture of these two cases could visualized as two cases below:
\[\begin{tikzpicture}
    \filldraw[gray] (1,0) -- (1,2) -- (2,3) -- (2,1);
    \draw[thick,->] (1.6,1.6) -- (2.6,1.6);
    \draw[thick,dashed] (1.6,1.6) -- (1,1.6);
    \draw[thick] (1,1.6) -- (0.5,1.6);
    \node (L5) at (0,1.6) {\(k\)};
    \node (M5) at (3,1.6) {\(l\)};
    \draw[->] (1,1) -- (2,2);
\end{tikzpicture}\quad\quad\quad
\begin{tikzpicture}
    \filldraw[gray] (1,0) -- (1,2) -- (2,3) -- (2,1);
    \draw[thick,->] (1.6,1.6) -- (2.6,1.6);
    \draw[thick,dashed] (1.6,1.6) -- (1,1.6);
    \draw[thick] (1,1.6) -- (0.5,1.6);
    \node (L5) at (0,1.6) {\(k\)};
    \node (M5) at (3,1.6) {\(l\)};
        \draw[<-] (1,1) -- (2,2);
\end{tikzpicture}.
\]
Two pictures above present a local picture of the intersection in the case when all
defects (the grey wall on the picture) are invariant with respect to the vertical translation.

To match the objects \(\mathbb{I}^f_{k|l}\)  with the description of
NS5 defects of charge \(k-l\) with \(r\) from section~\ref{sec:ns5-inst} we need to set \(\varphi^{\mathrm{fr}}=1\) and \(\psi=\psi^{\mathrm{fr}}\).
%
%In the unframed category the value of \(\sfZ\) does not depend on the orientation of the curves of defect:
%\[ \sfZ(\vec{I})=(\rmT^* \Hom(V_k,V_l),\quad W_{k,l}:=\Tr(X_k\phi_{kl}\phi_{lk})-\Tr(X_l\phi_{lk}\phi_{kl})).\]
The element \(\sfZ^\bl(\vec{I})\) is an element of the 2-category \(\HHom(\mathbf{k}^\bl,\mathbf{l}^\bl)\) thus the composition construction allow us
to interpret an element \(\sfZ^\bl(\vec{I})\) as morphism from \( \HHom(\mathbf{k}^\bl,\mathbf{0}^\bl)\) to
\(\HHom(\mathbf{l}^\bl,\mathbf{0}^\bl)\).

Let us denote the intervals as above by \(\vec{I}_{k\uparrow l}\) and
\(\vec{I}_{k\downarrow l}\). More generally we denote by
\[\vec{I}_{k_1\uparrow k_2\uparrow\dots \uparrow k_l},\quad \vec{I}_{k_1\downarrow k_2\downarrow\dots \downarrow k_l}\]
the interval that connects the connected components with the labels \(k_1\) and \(k_l\) and traverses the
domains with the labels \(k_2,\dots,k_l\) in between with the orientation of the
intersections as indicated by the arrows. We also allow a mixture of down/up orientations of the intersections.

According to the definition of our TQFT we have:
\begin{equation}\label{eq:comp-Z}
  \sfZ(\vec{I}_{k_1\uparrow k_2
    \dots\uparrow k_l})=\sfZ(\vec{I}_{k_1\uparrow k_2})\circ \sfZ(\vec{I}_{k_2\uparrow k_3})\circ\dots\circ\sfZ(\vec{I}_{k_{l-1}\uparrow k_l}).\end{equation}
The GIT quotient in the definition of the composition can be made explicit in many important cases:
\begin{proposition} \label{prop:comp12}For any \(n\ge 0\) we have:
  \[\sfZf(\vec{I}_{0\uparrow 1\uparrow \dots n})=(\rmT^*\Fl_n \times V_n,w),\quad
    \sfZf(\vec{I}_{0\downarrow 1\downarrow \dots n})=(\rmT^* \Fl_n,w),\]
  \[\sfZ(\vec{I}_{0\vert 1\vert \dots n})=(\rmT^*\Fl_n,w),\quad w=\mu\cdot X\in \CC[\gl_n\times\rmT^*\Fl_n]^{\GL_n},\]
  where \(\mu:\rmT^*\Fl_n\to \gl_n^*\) is the moment map and \(X\) are the coordinates on  \(\gl_n\).
\end{proposition}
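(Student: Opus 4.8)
The plan is to prove all three identities by induction on $n$, peeling off the outermost interval and combining the definition of the composition $\circ$ with equivariant Knörrer periodicity. The base case $n\le 1$ is immediate: $\rmT^*\Hom(V_0,V_1)$ is a point, $\Fl_1$ is a point, and every potential vanishes. For the inductive step I write $a_j=\phi_{j-1,j}$, $b_j=\phi_{j,j-1}$, use $\sfZ(\vec{I}_{0\vert 1\vert \dots n})=\sfZ(\vec{I}_{0\vert\dots\vert n-1})\circ\sfZ(\vec{I}_{n-1\vert n})$ together with the rule $(Z,w)\circ(Z',w')=(Z\times\gl_{n-1}\times Z'/_+\GL_{n-1},\,w+w')$, and the inductive hypothesis $\sfZ(\vec{I}_{0\vert\dots\vert n-1})=(\rmT^*\Fl_{n-1},\,\mu_{n-1}\cdot X_{n-1})$. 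This presents the composed datum as
\[
\bigl((\rmT^*\Fl_{n-1}\times\gl_{n-1}\times\rmT^*\Hom(V_{n-1},V_n))/_+\GL_{n-1},\;\Tr\bigl(X_{n-1}(\mu_{n-1}+b_na_n)\bigr)-\Tr(X_n a_n b_n)\bigr),
\]
where $X_{n-1}\in\gl_{n-1}$ is the fresh variable introduced by the composition, $X_n\in\gl_n$ is the external boundary variable, and $\mu_{n-1}\colon\rmT^*\Fl_{n-1}\to\gl_{n-1}^*$ is the moment map supplied by induction. (The precise sign in front of $b_na_n$ is dictated by the convention in $W_{k,l}$.)

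The geometric heart is the standard one-step realization of the cotangent bundle of the flag variety. On the locus where $a_n\colon V_{n-1}\to V_n$ is injective the diagonal $\GL_{n-1}$ acts freely, and King's stability criterion shows that the $\det^{+1}$ semistable locus is exactly this locus; the flag in $V_n$ is obtained by pushing the $V_{n-1}$-flag forward along $a_n$ and adjoining its image $a_n(V_{n-1})$ as the hyperplane $F_{n-1}$. I would then check that the symplectic (Hamiltonian) reduction $\{\mu_{n-1}+b_na_n=0\}/\GL_{n-1}$ of $\rmT^*\Fl_{n-1}\times\rmT^*\Hom(V_{n-1},V_n)$ is canonically $\rmT^*\Fl_n$, with the residual $\GL_n$-moment map equal to $a_nb_n=\mu_n$. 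This is the same normalization mechanism already used for the unit in Proposition~\ref{prop:unit}, now carried out for $G=\GL_{n-1}$ instead of a free $\GL_n$-torsor.

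It remains to transfer this from varieties to matrix factorizations and to treat the framing. The fresh variable $X_{n-1}\in\gl_{n-1}$ enters the potential only linearly, paired against $\nu:=\mu_{n-1}+b_na_n$, so it plays the role of a Koszul/BRST multiplier: adding $\Tr(X_{n-1}\nu)$ to the potential and passing to the $/_+\GL_{n-1}$ quotient realizes the reduction at $\nu=0$, and equivariant Knörrer periodicity~\cite{Knorrer} cancels $X_{n-1}$ against the normal directions to the orbits, leaving $\rmT^*\Fl_n$ with the residual potential $-\Tr(X_n a_nb_n)=-\Tr(X_n\mu_n)$, i.e.\ $\mu\cdot X$ up to the sign convention. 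The two framed identities run through the identical induction; one only tracks the extra framing factors. Each elementary framed interval contributes, beyond $W_{k,l}$, a quadratic pairing $\Tr(\psi\,v)$ on a copy of $V_k$ or $V_l$, and in the composition every such term attached to an intermediate label $1\le k\le n-1$ is precisely the canonical Knörrer form $\mathsf{Q}_{V_k}$ and is eliminated. For $\vec{I}_{0\uparrow 1\uparrow\dots n}$ the framing on the outer boundary survives as the factor $V_n$, yielding $\rmT^*\Fl_n\times V_n$; for $\vec{I}_{0\downarrow 1\downarrow\dots n}$ the outer framing sits on $V_0=0$ and contributes nothing, yielding $\rmT^*\Fl_n$.

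I expect the main obstacle to be the matrix-factorization reduction of the last paragraph: justifying rigorously, and $\GL_{n-1}$-equivariantly, that the linear multiplier $X_{n-1}$ together with the $\det^{+1}$-GIT quotient implements the symplectic reduction at the level of $\MF$-categories. The delicate points are that the vector bundle used in Knörrer periodicity must be $\GL_{n-1}$-equivariant (as required by the equivalence relation defining $\HHom(\mathbf{n-1},\mathbf{n})$), and that the zero locus of $\nu$ is clean along the stable locus so that no higher Koszul cohomology survives. Both of these reduce to the freeness of the $\GL_{n-1}$-action on the $\det^{+1}$-stable locus established in the geometric step, which is what makes the equivariant Knörrer cancellation legitimate.
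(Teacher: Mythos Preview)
Your proposal is correct and follows essentially the same strategy as the paper: induction on $n$, the same decomposition $\sfZ(\vec{I}_{0|\dots|n})=\sfZ(\vec{I}_{0|\dots|n-1})\circ\sfZ(\vec{I}_{n-1|n})$, identification of the $\det^{+1}$-stable locus with the injectivity locus of $\phi$, and Kn\"orrer reduction on the fresh $\gl_{n-1}$-variable. The paper resolves exactly the obstacle you flag by working in explicit affine charts $U_k=\{\det\phi_{\hat k}\neq 0\}$ and taking the slice $\phi_{ij}=\delta_{ij}$ ($i\le n-1$), after which the moment-map expression becomes linear in the free coordinate $\tilde\psi$ so that ordinary Kn\"orrer applies and an explicit matrix computation identifies the residual potential with $\Tr(X''\Ad_h Y)$; your symplectic-reduction phrasing is the global packaging of this chart-by-chart argument.
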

\begin{proof} Let us first prove the last equation, the other equations are analogous and it will be indicated
  at the end of the proof how one needs to modify the proof to get the first two formulas.
  We proceed by induction on \(n\). Thus we need to compute the composition:
  \[\sfZ(\vec{I}_{0\vert 1\vert\dots\vert n-1})\circ \sfZ(\vec{I}_{n-1\vert n}).\]

  It is convenient to think of \(\rmT^*\Fl_n\) as \(B_n\)-quotient because the trace map gives a natural
  pairing on \(\gl_n\) thus we can think of \(\mu\) as a map \(\rmT^*\Fl_n\rightarrow\gl_n\):
  \[\rmT^*\Fl_n=\GL_n\times\frn_n/B_n, \quad \mu(g,Y)=\Ad_gY,\]
  where \(g\) and \(Y\) are the coordinates on \(\GL_n\) and \(\frn_n\).

  In these notations
    the composition in question is the pair of the GIT quotient space and a potential:
  \[\rmT^*\Fl_{n-1}\times \rmT^*\Hom(V_{n-1},V_n)/_+\GL_{n-1}, w_{n-1,n}=\Tr(X'\Ad_g Y')+\Tr(X'\phi\psi)-\Tr(X''\psi\phi),\]
  where \(X'\in\gl_{n-1}\), \(X''\in\gl_{n}\), \(g,Y\) are the coordinates along \(\rmT^*\Fl_{n-1}\) and
  \(\psi\in \Hom(V_n,V_{n-1})\), \(\phi\in \Hom(V_{n-1},V_n)\) are the coordinates along \(\rmT^*\Hom(V_{n-1},V_n)\).

  The GIT quotient in last formula could be made very explicit, we choose to describe the quotient by constructing
  explicit charts in the quotients. Then we show that in each chart we can apply the Knorrer periodicity to
  simplify the potential.

  The GIT stable locus consists of points where \(\phi\) is injective. Thus we can assume that there is
  \(k\) such that \[\det(\phi_{\widehat{k}})\ne 0,\] where \(\phi_{\widehat{k}}\) is \(\phi\) with \(k\)-th row removed.
  Let us denote the locus where the last inquality holds by \(U_k\). It is clear that the quotient is covered
  by the charts \(U_k/\GL_{n-1}\) and  we can analyze the potential in each chart.

  To simplify notations let us consider the case \(k=n\). The natural slice to the \(\GL_{n-1}\)-action is the closed
  subset of elements constrained by:
  \[\phi_{ij}=\delta_{i,j},\quad 1\le i\le j\le n-1.\]
  Let us also denote the last row of \(\phi \) by \(v\) and the matrix of the first \(n-1\) columns
  of \(\psi\) by \(\tilde{\psi}\) and the last row of \(\psi \) by \(\psi'\). Then the potential \(w_{n-1,n}\) becomes:
  \begin{equation}\label{eq:redw}\Tr(X'\Ad_gY')+\Tr(X'\tilde{\psi})+\Tr(vX'\psi')-\Tr(X''\phi\psi).\end{equation}
  The sum of the first three terms is quadratic and we can apply Knorrer reduction. The reduction forces the following vanishing
  of the coordinates:
  \[X'=0,\quad  \Ad_gY'+\tilde{\psi}+\psi'v=0.\]
  Thus the new coordinates on the Knorrer reduced space are \(X'',Y',v,\psi'\) and in these coordinates we have:
  \[\phi\psi=
    \begin{bmatrix}
      -\psi'v-\Ad_gY'&\psi'\\
      -v\psi'v-v\Ad_gY'&v\psi'
    \end{bmatrix}
  \]
  Thus a direct computation shows that the last term of \eqref{eq:redw} is equal to
    \[\Tr(X''\Ad_hY),\mbox{ with}\quad Y=
    \begin{bmatrix}
      Y&g^{-1}\psi'\\0&0
    \end{bmatrix},\quad
  h=    \begin{bmatrix}
      g&0\\vg&1
    \end{bmatrix}.
  \]
  Hence we proved the last formula in the chart \(U_n\) and the computations in other charts are analogous. The argument in the
  framed case is basically the same.
\end{proof}

The last proposition computes the object \(\calF_n\) from the introduction since \(
\sfZf(\vec{I}_{0\uparrow 1\uparrow \dots n})=\LaTeXunderbrace{\xPhiwo\cdots\xPhiwo}_{\qn} \calO.\)

\subsection{The categories of closed curves}
\label{sec:value-closed-curves}
The choice of defect-related objects \(\sfZb(\vec{I})\) determines categories assigned to closed curves: a curve $C$ is presented as a gluing of two intervals, then its category $\sfZb(C)$ must be the category of morphisms between their objects. Two curves are of special importance for our braid-related constructions.
%
%Now let us derive few consequences of our choices for \(\sfZ\). According to general formalism of \tqft, to gives closed curve \(C\) \tqft\ assigns
%some one-category \(\sfZ^\bl(C)\). In our discussion two types of closed curves are of special importance. Let us first discuss the first class of
%the curve.

The first type is a curve that does not intersects any defects. So the curve is a circle that lies inside of the connected component
with the marking \(n\). We denote such circle \(S^1_n\).  To a point \(p\in S^1_n\) we assign 2-category \(\HHom(\mathbf{n}^\bl,\mathbf{0}^\bl)\).
For brevity, we start using notation \[[\mathbf{n}^\bl,\mathbf{m}^\bl ]:=\HHom(\mathbf{n}^\bl,\mathbf{m}^\bl),
\]
for the corresponding 2-category.

The interval \(I\) connecting
\(\pt_n\) to itself get assigned the identity \(\sfZb(I)=\bbL_{\tid}^n\in [n,n]\). Since $S^1_n$ is a result of gluing two such intervals, its category is the category of endomorphisms of the interval object:
\[\sfZb(S^1_n)=\Hom(\bbL_{\tid}^n,\bbL_{\tid}^n).\]

In the introduction explained the matrix factorization realization of the braid
group \eqref{eq:MFst} and the Drinfeld center construction of the corresponding
monoidal category \eqref{eq:mnchcchf} from
\cite{OblomkovRozansky18a}. To relate these constructions to corollary~\ref{cor:Dr}
and motivate the conjecture~\ref{conj:ch} we need to study the Drinfeld category
from \cite{OblomkovRozansky18a}:
\begin{equation}
\label{eq:drcat}
\MF_{\tDr}^\bullet = \MF_{G_n}\bigl((\frg^\bullet\times\frg^\bullet\times G)^{\ttst}, W^\bullet_{\tDr}\bigr),
\end{equation}
\[W_{\tDr}(X,Y,g)=\Tr X(\Ad_g(Y)-Y), \quad W_{\tDr}^{\tfrm}(X,v,Y,u,g)=W_{\tDr}(X,Y,g)+u^*\cdot v- u^*\cdot gv.\]
\begin{proposition}
For \(\bullet=\emptyset,\tfrm\)
the categories $\MF_{\tDr}^\bullet$
are equivalent to the categories of
endomorphisms of the identity objects $\bbL^n_{\tid}$:
  \[\MF_{\tDr}^\bullet=\Hom(\bbL^n_{\tid},\bbL^n_{\tid})\]
  %=\MF_{G_n}((\frg^\bullet\times\frg^\bullet\times G)^{\ttst}, W^\bullet_{\tDr}),\]
  %\[w_{\TdR}(x,y,G)=\tR(x(\aD_G(y)-y)), \QUAD w_{\TdR}^{\TFRM}(x,V,y,U,G)=w_{\TdR}(x,y,G)+\tR(U^* V)-\tR(U^*GV).\]
\end{proposition}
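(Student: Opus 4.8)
The plan is to unwind the definition of the endomorphism category and reduce it to $\MF_{\tDr}^\bl$ through a sequence of equivariant Knörrer reductions followed by a slice reduction along a transitive group action, in the same spirit as the proof of Proposition~\ref{prop:unit}. Since $\rmL^n_{\tid}=(\TsGLn,W_{\tid})$ is an object of $\HHom(\mathbf n^\bl,\mathbf n^\bl)$, the definition of the morphism categories gives
\[
\Hom(\rmL^n_{\tid},\rmL^n_{\tid})=\MF_{\GL_n\times\GL_n}\bigl(\gl_n\times\TsGLn\times\TsGLn\times\gl_n,\;W_{\tid}^{(1)}-W_{\tid}^{(2)}\bigr),
\]
with coordinates $X$ on the source $\gl_n$, pairs $(g_1,\phi_1)$ and $(g_2,\phi_2)$ on the two copies of $\TsGLn$, and $X'$ on the target $\gl_n$. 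The potential is $\Tr\,\phi_1(X-\Ad_{g_1}X')-\Tr\,\phi_2(X-\Ad_{g_2}X')$, and in the framed case there is the further summand $w_1\cdot(v-g_1v')-w_2\cdot(v-g_2v')$ carried by the framing variables $v,v'\in V_n$ and $w_1,w_2\in V_n^*$.

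Next I would isolate the nondegenerate quadratic pieces of this potential. Introducing $\psi:=\phi_1-\phi_2$ (and $\omega:=w_1-w_2$) and then performing the equivariant shift $X\mapsto X+\Ad_{g_1}X'$ (resp. $v\mapsto v+g_1v'$) turns the potential into
\[
\Tr\,\psi X+\Tr\,\phi_2(\Ad_{g_2}-\Ad_{g_1})X',
\]
together with $\omega\cdot v+w_2\cdot(g_2-g_1)v'$ in the framed case. Here $\Tr\,\psi X$ and $\omega\cdot v$ are exactly the canonical pairings on $\gl_n\times\gl_n$ and $V_n\times V_n^*$, and all substitutions are $\GL_n\times\GL_n$-equivariant (both $\psi$ and $X$ transform by $\Ad_a$, so $\Tr\,\psi X$ is invariant, and similarly for the framing pairing). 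Applying Knörrer periodicity \eqref{eq:KN} to remove the pair $(\psi,X)$, and in the framed case also $(\omega,v)$, yields an equivalent category supported on the reduced variety with coordinates $(g_1,g_2,\phi_2,X')$ (and $(w_2,v')$), now with potential $\Tr\,\phi_2(\Ad_{g_2}-\Ad_{g_1})X'$ (plus $w_2\cdot(g_2-g_1)v'$).

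Finally I would remove the residual group coordinate $g_1$. The $\GL_n\times\GL_n$-action on the factor $g_1\in\GL_n$ is left–right translation, which is transitive with stabilizer the diagonal $\GL_n$; exactly as in Proposition~\ref{prop:unit}, restriction to the slice $\{g_1=1\}$ then gives an equivalence of equivariant matrix-factorization categories whose residual symmetry is the diagonal $\GL_n$. Setting $g_1=1$ and renaming $g:=g_2$, $X:=\phi_2$, $Y:=X'$ (and $u:=w_2$, $v:=v'$) converts the potential into $\Tr\,X(\Ad_g Y-Y)=W_{\tDr}(X,Y,g)$, the framing term $w_2\cdot(g_2-1)v'$ becoming $u^*\cdot gv-u^*\cdot v$, which matches $W_{\tDr}^\bl$ up to an overall sign absorbed by $u\mapsto-u$. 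The residual diagonal $\GL_n$ acts by the adjoint action on $\gl_n\times\gl_n$ and by conjugation on $\GL_n$, so this is precisely the $\GL_n$-action defining $\MF_{\tDr}^\bl$, and the chain of equivalences exhibits $\Hom(\rmL^n_{\tid},\rmL^n_{\tid})\cong\MF_{\tDr}^\bl$.

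The step I expect to be the main obstacle is the last one, and in particular the bookkeeping of the stability condition $\ttst$ in the framed setting. The framed morphism categories are cut out by the semistable locus of \eqref{eq:catzso}, and one must verify that this semistability is carried, through the two Knörrer reductions and the slice $\{g_1=1\}$, exactly onto the condition defining $(\gl_n^{\tfrm}\times\gl_n^{\tfrm}\times\GL_n)^{\ttst}$; concretely this amounts to checking that a point is semistable precisely when the reduced framing vector is cyclic, and that the slice reduction preserves the semistable loci on both sides. In the unframed case no stability intervenes and the three steps produce the equivalence directly.
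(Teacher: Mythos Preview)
Your proposal is correct and follows essentially the same approach as the paper: unwind the definition of $\Hom(\rmL^n_{\tid},\rmL^n_{\tid})$, then reduce via a slice to the free $\GL_n$-action on one copy of $\GL_n$ together with Kn\"orrer periodicity on the resulting quadratic piece. The only difference is the order of operations---the paper slices first (setting $g'=1$) so that the second summand becomes $\Tr\,\phi'(X-X')$ and then applies Kn\"orrer, whereas you perform the linear substitution $\psi=\phi_1-\phi_2$, $X\mapsto X+\Ad_{g_1}X'$ to expose the quadratic term before slicing; both routes land on the same potential $W_{\tDr}^\bl$, and your caveat about tracking the stability condition in the framed case is a point the paper's proof also leaves implicit.
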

\begin{proof}
  We consider only the case \(\bullet=\emptyset\) the other case is analogous.
  The statement follows from the Knorrer periodicity \cite{Knorrer}. Indeed, by
  our definition we have
  \[\Hom(\rmL_{\tid}^{n},\rmL_{\tid}^n)=\MF_{\GL_n}(\rmT^*\GL_n\times\gl_n\times\gl_n\times\rmT^*\GL_n/\GL_n,W),\]
  \[W=\Tr\, \phi\bigl(X-\Ad_g(X')\bigr)-\Tr\,\phi\bigl(X'-\Ad_{g'}(X)\bigr)\]
  where \((\phi,g)\) and \((\phi',g')\) are the coordinates on two copies of
  \(\rmT^*\GL_n\) and \(X,X'\) are the coordinates on two copies of \(\gl_n\).
  By setting \(g'=1\) we take slice to the \(\GL_n\)-orbits. On the slice
  the second term in the formula for \(W\) becomes
  \(\Tr\,\phi'(X-X').\)
  Thus the Knorrer periodicity implies that the restriction on the
  locus \(X=X',\phi'=0\) is the equivalence of the corresponding categories
  of matrix factorizations.
\end{proof}

%Let us introduce a notation for the potential a in the last proposition:
%\[W_{\tDr}(X,Y,g)=\Tr(X(\Ad_g(Y)-Y)).\]
We can linearize the potential \(W_{\tDr}^\bullet\) by introducing a new variable \(U=Yg^{-1}\):
\[W_{\tlin}(X,U,g)=\Tr(X[g,U]),\quad W_{\tlin}^{\tfrm}(X,v,U,u,g)=W_{\tlin}(X,U,g)+u^*\cdot  v -u^*\cdot gv.\]

The  group \(G\) naturally embeds inside its lie algebra \(\frg\), \(j_G\colon G\to \frg\). Induced by this embedding we have
the localization functor:
\[\mathrm{\tloc}^\bullet\colon \MF^\bullet_{\tDr}\to \underline{\MF}^\bullet_{\tDr}=\MF_G\bigl((\frg^\bullet\ti\frg^\bullet\ti \frg)^{\ttst},W_{\tlin}^\bullet\bigr).\]

It turns out that in the framed case the localization functor is an isomorphism:

\begin{proposition} \cite[Proposition 5.5.2]{OblomkovRozansky18a}
  The localization functor \(\mathrm{\tloc}^{\tfrm}\) is an isomorphism.
\end{proposition}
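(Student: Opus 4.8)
The plan is to reduce the statement to a computation of the critical locus of the linearized potential, and then to invoke the standard fact that a category of matrix factorizations is supported on the critical locus of its potential. First I would use the substitution $U=Yg^{-1}$, which is an isomorphism of varieties exactly over the locus where $g$ is invertible, to replace $W_{\tDr}^{\tfrm}$ by $W_{\tlin}^{\tfrm}$; this identifies $\MF_{\tDr}^{\tfrm}$ with $\MF_G\bigl((\frg^{\tfrm}\ti\frg^{\tfrm}\ti G)^{\ttst},W_{\tlin}^{\tfrm}\bigr)$. After this identification the source $\MF_{\tDr}^{\tfrm}$ and the target $\underline{\MF}_{\tDr}^{\tfrm}$ carry the same potential $W_{\tlin}^{\tfrm}$ and differ only in whether the last factor is $G$ or $\frg$. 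The localization functor $\mathrm{\tloc}^{\tfrm}$ is, up to this identification, built from the open immersion $j_G\colon G\hookrightarrow\frg$ in that factor; it is therefore an equivalence as soon as the restriction functor $j_G^*\colon\underline{\MF}_{\tDr}^{\tfrm}\to\MF_{\tDr}^{\tfrm}$ along $j_G$ is, the two being adjoint.

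So it suffices to show $j_G^*$ is an equivalence. For a $G$-stable open subset $U\subseteq M$ that contains $\mathrm{Crit}(W)\cap W^{-1}(0)$, restriction $\MF_G(M,W)\to\MF_G(U,W|_U)$ is an equivalence, since the matrix factorization category only sees a neighbourhood of that locus. I would apply this with $M=(\frg^{\tfrm}\ti\frg^{\tfrm}\ti\frg)^{\ttst}$, $W=W_{\tlin}^{\tfrm}$ and $U=(\frg^{\tfrm}\ti\frg^{\tfrm}\ti G)^{\ttst}$; the claim then reduces to showing that the critical locus of $W_{\tlin}^{\tfrm}$ on the stable locus is contained in $\{g\in G\}$.

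This last containment is the heart of the argument and is where the framing does its work. Differentiating $W_{\tlin}^{\tfrm}=\Tr(X[g,U])+u^*\cdot(v-gv)$ in $X$ and $U$ gives $[g,U]=0$ and $[g,X]=0$, while the $u^*$- and $v$-derivatives give the framing equations $gv=v$ and $u^*g=u^*$ (the $g$-derivative yields a rank-one relation that I will not need). Now the stability condition enters: on the stable locus the framing vector $v$ is cyclic for the subalgebra $A=\langle X,U\rangle\subseteq\gln$. Since $g$ commutes with $X$ and $U$ it commutes with all of $A$, so $g(av)=a(gv)=av$ for every $a\in A$; as $Av=V$ this forces $g=\mathrm{Id}$, which is in particular invertible. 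Hence $\mathrm{Crit}(W_{\tlin}^{\tfrm})$ meets the stable locus only where $g\in G$.

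I expect the main obstacle to be twofold. First, one must apply the ``support on the critical locus'' principle carefully in the equivariant, $\ZZ/2$-graded setting and match it to the precise definition of $\mathrm{\tloc}^{\tfrm}$, so that the equivalence of the restriction $j_G^*$ yields the equivalence of its adjoint; and one must check that the GIT stability condition genuinely supplies the cyclicity of the framing data used above. It is worth stressing that the framing is essential: in the unframed case the equations $gv=v$ and $u^*g=u^*$ are absent, so $g$ is only required to commute with $X$ and $U$ and may be singular, the critical locus is then not contained in $G$, and this is precisely why the proposition is asserted only for $\bullet=\tfrm$.
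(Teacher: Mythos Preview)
The paper does not give a proof of this proposition: it is stated with a citation to \cite{OblomkovRozansky18a} and no argument is supplied in the present text. So there is no ``paper's own proof'' to compare against here; your proposal is an attempt to supply what the paper defers to another reference.

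That said, your outline is the expected one and is sound in spirit. The reduction via $U=Yg^{-1}$ to a common potential $W_{\tlin}^{\tfrm}$, followed by the observation that matrix factorizations are supported on the critical locus, is the standard mechanism for showing that an open restriction along $j_G\colon G\hookrightarrow\frg$ is an equivalence. The critical-locus computation you give --- $[g,X]=[g,U]=0$ and $gv=v$, combined with cyclicity of $v$ under $\langle X,U\rangle$ to force $g=\mathrm{Id}$ --- is exactly the argument that makes the framed case work and the unframed case fail, and this matches the paper's emphasis that the isomorphism holds only for $\bullet=\tfrm$.

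Two points to tighten. First, the precise meaning of the stability condition ``$\ttst$'' on $(\frg^{\tfrm}\times\frg^{\tfrm}\times\frg)^{\ttst}$ is not spelled out in this paper; you are relying on it being the Hilbert-scheme cyclicity condition on $v$ with respect to $\langle X,U\rangle$ (equivalently $\langle X,Y\rangle$), which is the intended one but should be sourced from \cite{OblomkovRozansky18a}. Second, the direction of $\mathrm{\tloc}^{\tfrm}$ in the paper goes from the $G$-side to the $\frg$-side, so it is the adjoint to restriction rather than restriction itself; your remark that an adjoint of an equivalence is an equivalence handles this, but you should be explicit about which adjoint (extension by zero versus pushforward) is meant and why it lands in the correct category. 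These are bookkeeping issues rather than gaps in the main idea.
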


Since the potential \(W^{\tlin}\) is linear along the last copy of \(\frg\),
the  Koszul duality (see for example \cite{ArkhipovKanstrup15a} or \cite{OblomkovRozansky18a}) provides
an equivalence:
\begin{equation}
\label{eq:drcato}
\mathrm{KSZ}\colon \underline{\MF}^{\tfrm}_{\tDr}\longrightarrow \Dper(\Hilb_n(\CC^2)).
\end{equation}
Thus we have completed proof of theorem~\ref{thm:DrZ}.

The second type of a closed curve is the line that intersects our braid transversally. The line goes through the regions with the
marks \(0,1,\dots,n-1,n,n-1,\dots,0\). Figure~\ref{fg:crvt} gives an example.
\begin{figure}
  \includegraphics[width=2in]{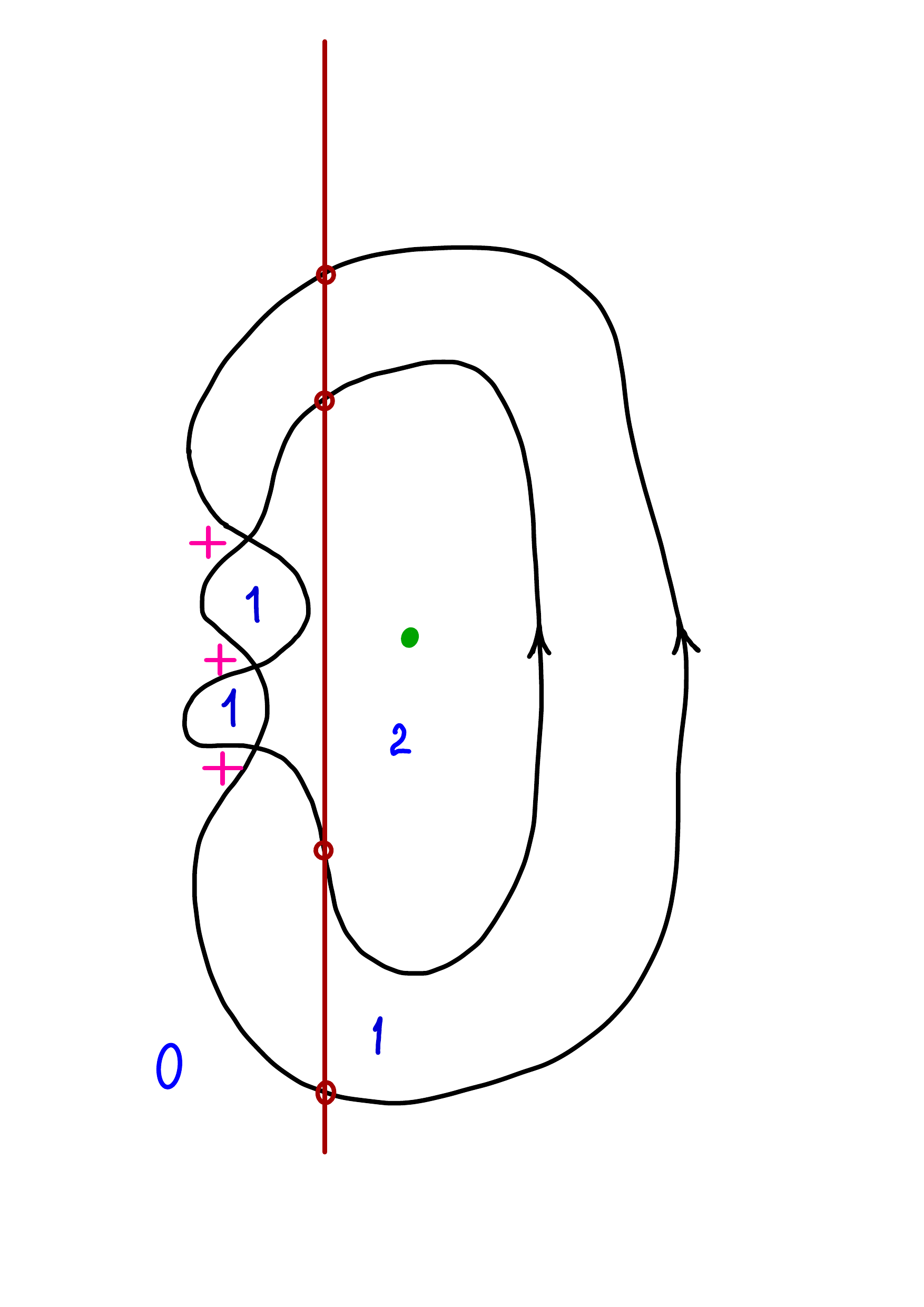}
  \caption{Plane \(\mathbb{R}^2_{\sigma_1^3}\) cut by \(\mathbb{R}_{0,1,2,1,0}\)}
  \label{fg:crvt}
\end{figure}
We denote such a line  compactified by a point at infinity as  \(S^1_{0\uparrow 1\uparrow \dots\uparrow n \downarrow n-1\downarrow \dots\downarrow 0}.\) The value
\(\sfZ^\bl\) follows immediately from the proposition~\ref{prop:comp12}:
\begin{equation}
\label{eq:cats}
\sfZf(S^\bl_{0\uparrow 1\uparrow\dots\uparrow n\downarrow\dots \downarrow 0})=\MF^{\ttst}_{G_n}(\gl_n^\bl\times\rmTs \Fl_n\times \rmT^* \Fl_n,w_1-w_2),
\end{equation}
where "st" indicate that we restrict to the GIT stable locus of the corresponding space.

The categories~\eqref{eq:cats} are closely related to the main categories of
\cite{OblomkovRozansky16}. Recall that in \cite{OblomkovRozansky16} we worked with \(G_n\times B\times B\)-equivariant categories of matrix factorizations on the space
\[\calX^\bl:=\gl_n^\bl\times (G_n\times\frn_n)\times (G_n\times\frn_n)\]
with the equivariant structure preserving the potential
\[W(X,g_1,Y_1,g_2,Y_2)=\Tr\,X(\Ad_{g_1}X_1-\Ad_{g_2}X_2),\]
where \(X\) is the coordinate in \(\gl_n\) and \(g_i,Y_i\) are the coordinates in \(G\) and \(\frn\).

\subsection{Values on discs}
\label{sec:values-discs}

As a final step of our construction we need to discuss the values of \tqft\ on discs.  The first type of disc is the disc \(D_\emptyset\) that bounds \(S^1_n\) and
does not contain tautological defect point. The category $\sfZb(S^2) = \Hom(\rmL_{\tid}^n,\rmL_{\tid}^n)$ is monoidal and $\sfZb(D_\emptyset)$ represents the identity object in it. Hence we set
%
% In this situation we fix
\[\mathsf{Z}^{\tfrm}(D_\emptyset):=\calO\in \Dper(\Hilb_n(\CC^2)).\]
If the disc contains the point of tautological defect then we set
\[\mathsf{Z}^{\tfrm}(D_{\mathrm{\ttaut}}):=\Lambda^\bullet \calB\in \text{\it Coh}(\Hilb_n(\CC^2)),\]
where \(\calB\) is the tautological vector bundle.

The other important type of a disc is a half-plane $H$ bordered by the line \(S^1_{0\uparrow 1\uparrow \dots\uparrow n \downarrow n-1\downarrow \dots\downarrow 0}\). Its object $\sfZb(H)$ lies in the monoidal category
\begin{equation}
\label{eq:moncatbr}
\sfZb(S^1_{0\uparrow 1\uparrow \dots\uparrow n \downarrow n-1\downarrow \dots\downarrow 0}) =
\End(\vec{I}_{0\uparrow 1\uparrow\cdots\uparrow n-1\uparrow n}).
\end{equation}
and it depends on the configuration of defects inside $H$.
%
%If $H$ is  a half-plane
%with the boundary \(S^1_{0\uparrow 1\uparrow \dots\uparrow n \downarrow n-1\downarrow \dots\downarrow 0}\), the value \(Z(H)\) is an element of \(\MF_n\) that depends on the configuration
%of the defects inside \(H\).
Denote by $H_1$ the simplest configuration which is the collection of non-intersecting curves connecting the points of the same type as in the right half-plane in Figure~\ref{fg:manyred}.
 In this situation $\sfZb$ is the identity object:
 % we set the value to be a unit matrix factorization:
\[\sfZ^\bullet(H_1)=\calC_1\in \MF_n^{\bullet}.\]

%Let us denote the simplest half-plane as \(H_1\).
More generally, denote by $H_\beta$  the half-plane containing a braid \(\beta\) as in the left of Figure~\ref{fg:manyred}.
%We denote such half-plane by \(H_\beta\).
The value of \(Z(H_\beta)\) for more complicated configurations of defects can be computed by
using the monoidal structure of the category~\eqref{eq:moncatbr} through
cutting \(H_\beta\) into the union:
\[ H_\beta=
%H_1\cup
\bigcup_k S_{\sigma^{\epsilon_k}_{j_k}},\]
where \(\beta=\sigma^{\epsilon_1}_{j_1}\dots \sigma^{\epsilon_l}_{j_l}\) and \(S_{\sigma^{\epsilon_k}_{j_k}}\) is the disc with the boundary \(S^1_{0\uparrow 1\uparrow \dots\uparrow n \downarrow n-1\downarrow \dots\downarrow 0}\)
and defects inside the strip form an elementary braid on the $j_k$-th and $(j_k+1)$-st stands, see the figure below for the case \(\beta=\sigma_1^3\).
\begin{figure}
  \includegraphics[width=2in]{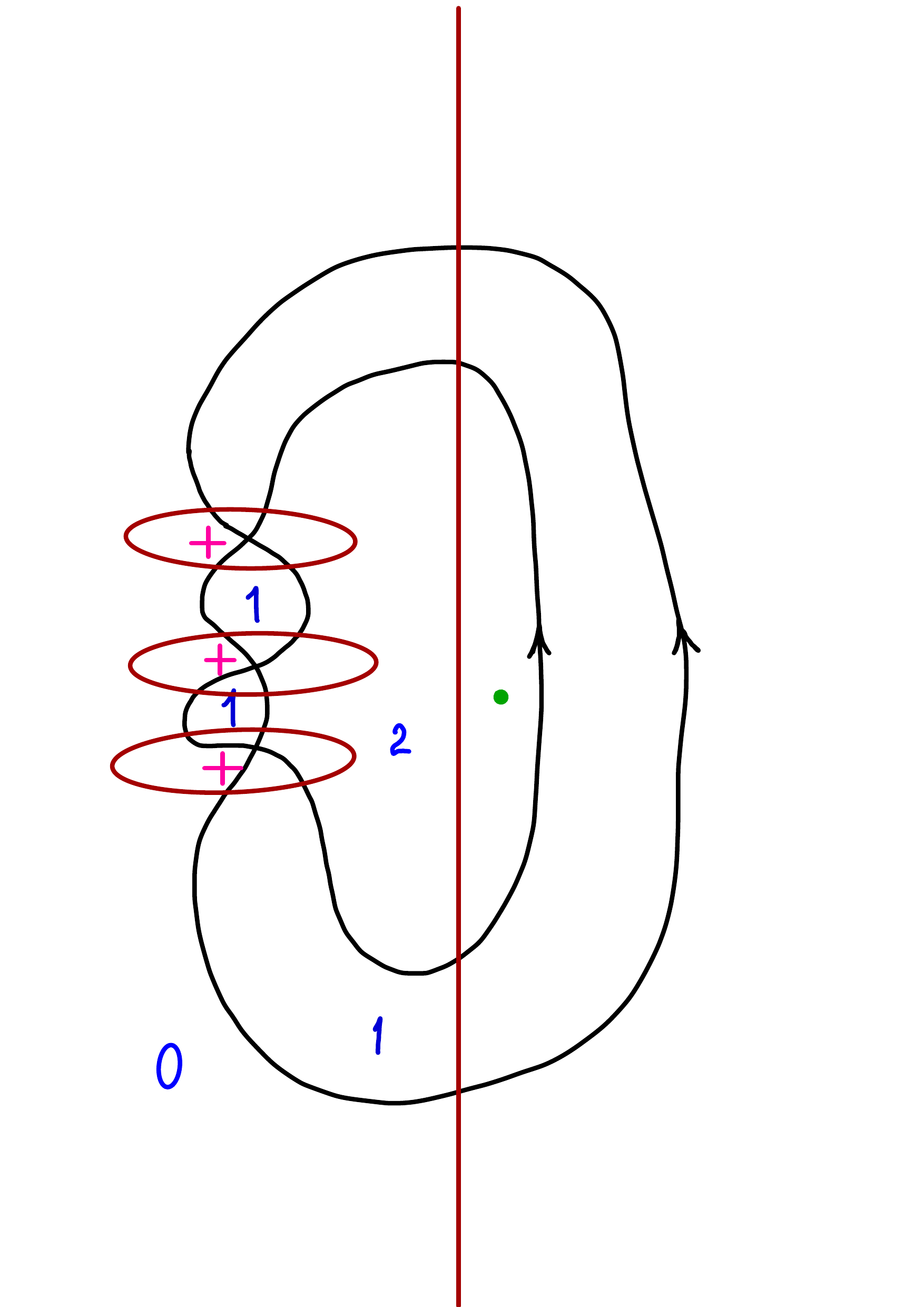}
  \caption{Decomposing \(\mathbb{R}^2_{\beta}\) on two half-planes}
\label{fg:manyred}
\end{figure}
Since
\[\sfZ^\bullet(H_\beta)=\sfZ^\bullet(S_{\sigma_{j_1}^{\epsilon_1}})\star\dots\star \sfZ^\bullet(S_{\sigma_{j_l}^{\epsilon_l}}),
%(\sfZ^\bullet(H_1)),
\]
it is enough to define \(\sfZ^\bullet(S_{\sigma_k^{\pm 1}})\in \MF_n^\bullet\),
%Hom(\MF_n^\bullet,\MF_n^\bullet)\)
as in \cite{OblomkovRozansky16}:
%which defined to be the matrix factorization from \cite{OblomkovRozansky16}:
\[\sfZ^\bullet(S_{\sigma_k^{\pm 1}}):=\calC_\pm^{(k)}\in \MF^\bullet,\]
%here we use the convolution structure on the category \(\MF_n\) from \cite{OblomkovRozansky16} to interpret \(Z(S_{\sigma_k^{\pm 1}})\) as
%an element of \(Hom(\MF_n,\MF_n)\).
It is shown in \cite{OblomkovRozansky16} that the element \(Z(H_\beta)\) only depends on the braid \(\beta\)
but not on the braid presentation, thus our disc assignment indeed is a well-defined partition function of \tqft.

Finally, we define the value of \(\sfZf\) on the half-plane \(H_1^{\ttaut}\) containing the
unit braid and the tautological point defect as
\[\sfZf(H_1^{\mathrm{\ttaut}}):=\calC_1\otimes \Lambda^\bullet\calB.\]

We leave the following statement as conjecture and will provide a proof in the forthcoming publication.

\begin{conjecture}
  The above assignments of the values of \(Z\) are the part of the data of a well-defined 3D \tqft.
\end{conjecture}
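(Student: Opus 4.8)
The plan is to organize the verification around the principle that a defect TQFT is determined by its values on a generating set of local pieces together with the requirement that these values be compatible under the finitely many elementary moves relating different decompositions of a fixed stratified cobordism. First I would make the domain precise as an $(\infty,3)$-category of bordisms equipped with the allowed defect strata (surfaces, curves, and points), and take as target the three-category $\thcgb$; the assignments on points, intervals, curves and discs recorded above then specify a candidate functor $\sfZf$. The bulk of the argument is to check that this candidate respects the gluing structure in every codimension, which by a standard reduction amounts to two things: (i) the associativity and unit coherences of the composition $\circ$ of interval-defects and of the convolution $\star$ of surface-defects, and (ii) invariance of the disc and surface values under the moves generating the topological type of a defect configuration.

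For step (i) the unit coherence is exactly Proposition~\ref{prop:unit}, which shows $\rmL^n_{\tid}$ is a two-sided identity up to Kn\"orrer equivalence, while the explicit composition formulas of Proposition~\ref{prop:comp12} provide the associativity data for chains of interval-defects through an inductive GIT computation; I would upgrade these equalities to coherence isomorphisms by tracking the Kn\"orrer functors explicitly rather than passing to the quotient $\thc'_{\tman}$, in keeping with the convention adopted earlier in the paper. For step (ii) the decisive inputs are already available from \cite{OblomkovRozansky16}: the matrix factorizations $\calC^{(k)}_\pm$ assigned to elementary braid discs satisfy the braid relations, so that $\sfZf(H_\beta)$ depends only on $\beta$, and the Markov moves, together with the framing corrections built into the oriented intervals $\vec{I}_{k\uparrow l}$, yield isotopy invariance of the resulting link invariant. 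The compatibility of the two presentations of $S^2$ as a union of two discs — one cut as in Theorem~\ref{thm:MF} and one cut along the boundary of a tubular neighbourhood of the tautological line, as in Theorem~\ref{thm:DrZ} — is precisely the identity~\eqref{eq:Hilb}, which I would re-derive intrinsically as the instance of the gluing axiom computing $\sfZf(S^2)$ in two ways.

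The hard part will be the coherence of the higher-categorical structure rather than any individual matrix-factorization computation. This means making the bordism $(\infty,3)$-category with defects precise, verifying that $\thcgb$ admits the adjunctions required for a fully extended theory — equivalently, that $\sfZf(\pt_n)$ is fully dualizable in the sense needed for the cobordism hypothesis with singularities — and checking the interchange law relating $\circ$ and $\star$. This is exactly where manifolds with corners and the attendant $(\infty,k)$-machinery become unavoidable, and where the Drinfeld-center subtleties of Conjecture~\ref{conj:Z} enter: one must exhibit on $\HHom(\ii\mathrm{d},\ii\mathrm{d})$ the braiding that realizes $\Dper(\Hilb_n(\CC^2))$ as its center, so that the disc values $\calO$ and $\Lambda^\bullet\calB$ are genuinely central and the gluing pairings are well defined. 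I expect this dualizability-and-coherence step to be the principal obstacle, and would isolate it by first establishing the statement in the truncated, non-extended setting — values on closed surfaces and closed curves only — before lifting it to the fully extended theory in the forthcoming work.
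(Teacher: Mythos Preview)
The paper does not prove this statement: it is explicitly labeled a conjecture, and immediately before it the authors write ``We leave the following statement as conjecture and will provide a proof in the forthcoming publication,'' referring to \cite{OblomkovRozansky18d}. So there is no proof in the paper to compare against.

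Your proposal is not a proof either --- it is an outline of a strategy, and you are candid about this. As a strategy it is well aligned with what the paper itself signals is required: the authors already remark that to define the 3D \tqft\ one must ``include into the domain of $\sfZ$ also manifolds with corners and work with more subtle setting of $(\infty,k)$-categories,'' and defer exactly this to \cite{OblomkovRozansky18d}. Your identification of the hard part --- making the defect bordism $(\infty,3)$-category precise, establishing full dualizability of $\sfZf(\pt_n)$ in $\thcgb$, and handling the $\circ$/$\star$ interchange and the Drinfeld-center braiding --- matches the paper's own diagnosis of what is missing. The ingredients you cite for the low-codimension checks (Proposition~\ref{prop:unit}, Proposition~\ref{prop:comp12}, the braid and Markov relations from \cite{OblomkovRozansky16}, and the two-cut comparison~\eqref{eq:Hilb}) are exactly the pieces the paper has in hand.

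Where your outline is thinnest is precisely where you say it is: none of the dualizability or coherence claims are actually established, and in particular the assertion that $\Dper(\Hilb_n(\CC^2))$ carries the braiding making it the Drinfeld center of $\sfZf(\pt_n)$ is itself Conjecture~\ref{conj:Z} in the paper, not a known input. So your plan reduces the conjecture to other open problems of comparable difficulty rather than resolving it. That is a fair summary of the situation, but it should be presented as such, not as a proof.
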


\subsection{Value on \(\RR^2_\beta\)}
\label{sec:value-rr2_beta}
There are two ways to cut the plane with a closed braid defect \(\RR^2_\beta\) into two pieces. As a result, the \tqft\ formalism implies two presentations of the corresponding vector space $\sfZf(\RR^2_\beta)$ as the space of morphisms between two objects in the category of the cutting line.

The first cut splits \(\RR^2_\beta\)
%If we can cut \(\RR^2_\beta\)
in two half-planes \(H_1^{\mathrm{\ttaut}}\) and  \(H_\beta\), and the corresponding presentation is
%then the formalism of \tqft\ implies that its partition-function is the space of morphisms between the corresponding objects:
\[\sfZf(\RR^2_\beta)=\Hom\bigl(\sfZf(H_1^{\mathrm{\ttaut}}),\sfZf(H_\beta)\bigr)= \bigl(\Hom(\calC_\beta,\calC_1)\otimes\Lambda^\bullet\calB\bigr)^{B^2\times G}.\]
The vector space \(\sfZf(\RR^2_\beta)\) is triply-graded and
 the main result of \cite{OblomkovRozansky16} could be restated as
\begin{theorem}\cite[Theorem 13.3]{OblomkovRozansky16}
  The triply-graded vector space \(\sfZf(\RR^2_\beta)\) is an isotopy invariant of the closure of the braid \(\beta\) after a special shift of the grading.
\end{theorem}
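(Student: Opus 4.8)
The plan is to reduce the statement to the main theorem of \cite{OblomkovRozansky16} by making the TQFT-to-matrix-factorization dictionary explicit: the genuinely new content is the translation, while the isotopy invariance itself is inherited from the cited work. Concretely, I would first identify the TQFT vector space with the triply-graded homology $\HHH(\beta)$ of \eqref{eq:old}, and then invoke the known invariance of the latter under the Markov moves.

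First I would record the two disc-value assignments already fixed above. By construction through the monoidal composition $\star$ and the generator assignments $\sfZf(S_{\sigma_k^{\pm1}})=\calC_\pm^{(k)}$, the object $\sfZf(H_\beta)=\calC_\beta$ is precisely $\Phi(\beta)$ of \cite{OblomkovRozansky16}, while $\sfZf(H_1^{\ttaut})=\calC_1\otimes\Lambda^\bullet\calB$ is $\Phi(1)\otimes\Lambda^\bullet\calB$. Substituting these into the cutting formula
\[\sfZf(\RR^2_\beta)=\Hom\bigl(\sfZf(H_1^{\ttaut}),\sfZf(H_\beta)\bigr)=\bigl(\Hom(\calC_\beta,\calC_1)\otimes\Lambda^\bullet\calB\bigr)^{B^2\times G}\]
identifies $\sfZf(\RR^2_\beta)$ with a space of $G\times B\times B$-equivariant morphisms in $\MF_n^\bl$. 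Since $\calB$ is locally free, the projection formula lets me rewrite $\Hom(\calC_\beta,\calC_1)\otimes\Lambda^\bullet\calB=\Hom(\calC_\beta,\calC_1\otimes\Lambda^\bullet\calB)$.

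Second, I would match this invariant morphism space with the hypercohomology defining $\HHH(\beta)$. As in the definition of $\MF_n^\bl$, the $B^2\times G$-invariants are computed by the Chevalley--Eilenberg differential $d_{ce}$ together with the correction terms $\partial_l,\partial_r$; totalizing the resulting complex and passing to two-periodic cohomology realizes exactly $\mathbb{H}\bigl(\calE xt(\Phi(\beta),\Phi(1)\otimes\Lambda^\bullet\calB)\bigr)$. This yields $\sfZf(\RR^2_\beta)\cong\HHH(\beta)$ as triply-graded vector spaces, the three gradings being the homological degree, the internal $\ZZ$-grading attached to the potential, and the $\Lambda^\bullet\calB$-grading. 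With this dictionary in place, the invariance of $\sfZf(\RR^2_\beta)$ under conjugation $\beta\mapsto\gamma\beta\gamma^{-1}$ and, after the indicated shift of the triple grading, under stabilization $\Br_n\incl\Br_{n+1}$, $\beta\mapsto\beta\sigma_n^{\pm1}$, is the content of the main theorem of \cite{OblomkovRozansky16}; since any two braid closures presenting isotopic links are related by these Markov moves, the shifted space is an isotopy invariant of $L(\beta)$.

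The main obstacle is the stabilization (first Markov) move, which is the delicate computation of \cite{OblomkovRozansky16} comparing $\HHH$ of $\beta\in\Br_n$ with $\HHH$ of its stabilization in $\Br_{n+1}$; it is here that the tautological defect $\Lambda^\bullet\calB$ and the precise normalization of the grading shift (in terms of the writhe and of $n$) are indispensable. By contrast, conjugation invariance is immediate from the cyclicity of $\star$, equivalently the trace-like nature of the closure around the tautological point, and the dictionary of the first two steps is routine once the well-definedness of $\sfZf(H_\beta)$ --- its independence of the chosen braid word, itself established in \cite{OblomkovRozansky16} --- is invoked.
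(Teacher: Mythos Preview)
Your proposal is correct and matches the paper's approach: the paper does not give an independent proof but simply states that, via the identification $\sfZf(\RR^2_\beta)=\bigl(\Hom(\calC_\beta,\calC_1)\otimes\Lambda^\bullet\calB\bigr)^{B^2\times G}$, the theorem is a restatement of the main result of \cite{OblomkovRozansky16}. Your explicit matching with $\HHH(\beta)$ from \eqref{eq:old} and your discussion of the Markov moves unpack that citation in more detail than the paper itself provides, but the logic is the same.
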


 The second cut (see Figure~\ref{fg:circlcut}) splits $\RR^2_\beta$ into the inner disc $D_{\ttaut}$ (containing the tautological bundle defect) and its complement $D^\infty_\beta$ which contains the closed braid defect. The cut goes over a circle, that lies in the region marked by $n$ and does not intersect defect lines, hence its category is
\begin{equation}
\label{eq:vspco}
\sfZf(S^1_n) = \MF_{\tDr}^{\tfrm}\cong\Dper(\Hilb_n(\CC^2))
\end{equation}
of~\eqref{eq:drcat} and~\eqref{eq:drcato}. The object of $D_{\ttaut}$ is just the defect bundle: $\sfZf(D_{\ttaut})= \Lambda^\bullet\calB$. The object of $D^\infty_\beta$ is determined by the categorical Chern character functor
\[\CH^{\tfs}_{\tloc}\colon \MF_n^{\tfrm}\to\sfZf(S^1_n),\]
%\MF_{\tDr}^{\tfrm}\cong\Dper(\Hilb_n(\CC^2),\]
that is, $\sfZf(D^\infty_\beta) = \CH^{\tfs}\bigl(\sfZf(H_\beta)\bigr)$. Thus we get the second presentation of the vector space $\sfZf(\RR^2_\beta)$ as the $\Ext$ space between two complexes of sheaves within the derived category of 2-periodic sheaves on the Hilbert scheme $\Hilb_n(\CC^2)$:
\begin{equation}
\label{eq:vspct}
\sfZf(\RR^2_\beta) = \calH om\bigl( \CH^{\tfs}\bigl(\sfZf(H_\beta)\bigr), \Lambda^\bullet\calB\bigr)
\end{equation}
%
%Our \tqft\ conjecture also implies that the value \(\sfZf(\RR^2_\beta)\) can be computed by cutting \(\RR^2_\beta\) on two pieces: \(D_{\ttaut}\) and \(D^\infty_\beta\)
%with \(D^\infty_\beta\) is the complement of \(D_{\ttaut}\), it contains closure of the braid \(\beta\). The element \(\sfZf(D^\infty_\beta)\) can be computed
%by \tqft, we postpone this computation for the paper where we establish the \tqft\ conjecture. In the note \cite{OblomkovRozansky18b}
%we provide a conjectural description of
%\(\sfZf(D^\infty_\beta)\) in terms of what we call categorical Chern character functor:
%\[\CH^{\tfs}_{\tloc}\colon \MF_n^{\tfrm}\to\MF_{\tDr}^{\tfrm}.   \]
%
%
%In the Drinfeld center there is a special element
%\[\calO=\big[g-1,[X,Z]\big],\]
%where \((X,Z,g)\) are the coordinates along \(\gl_n\times\gl_n\) and \(\GL_n\) respectively.
The isomorphism between the vector spaces~\eqref{eq:vspco} and~\eqref{eq:vspct} is
one of the  key  properties of the functor \(\CH^{\tfs}_{\tloc}\)
(one may call it a simple case of the categorical Riemann-Roch formula):
\begin{theorem}\cite[Theorem 6.0.4]{OblomkovRozansky18a}
  For any \(\calC\in\MF_n\) we have:
  \[\calH om(\Lambda^\bullet\calB,\CH^{\tfs}_{\tloc}(\calC))^G=\calH om(\calC_1\otimes\Lambda^\bullet\calB,\calC)^{B^2\times G}.\]
\end{theorem}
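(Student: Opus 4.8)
The plan is to apply to the right-hand side the three successive $\calE xt$-preserving functors whose composite is the Chern-character functor, namely $\CH^{\tfs}_{\tloc}=\mathrm{KSZ}\circ\tloc^{\tfrm}\circ\Av$, where $\Av$ is the averaging functor, $\tloc^{\tfrm}$ is the localization isomorphism, and $\mathrm{KSZ}$ is the Koszul-duality equivalence of~\eqref{eq:drcato}. Applied to the pair $(\calC_1\otimes\Lambda^\bullet\calB,\,\calC)$, this composite carries the second entry to $\CH^{\tfs}_{\tloc}(\calC)$ by definition and the first entry to $\Lambda^\bullet\calB$, while simultaneously converting the $B^2\times G$-invariants on the matrix-factorization side into the $G$-invariants on the Hilbert scheme. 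These two matchings are exactly the two sides of the asserted identity.

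First I would run the averaging step. By the Proposition on $\Av$ above, $\Av$ is fully faithful and satisfies $\calE xt(\Av\calF,\Av\calG)=\calE xt(\calF,\calG)$ as $\CC[\gl_n]^{G}$-modules, and averaging over $B^2$ matches the $B^2\times G$-invariants of the source with the $G$-invariants of the flag-variety target $\MF_{G}(\gl_n^\bl\times\rmT^*\Fl_n\times\rmT^*\Fl_n,W)$. Since $\tloc^{\tfrm}$ is an isomorphism and $\mathrm{KSZ}$ is an equivalence, both preserve $\calE xt$-spaces. Composing the three steps gives
\[\calE xt(\calC_1\otimes\Lambda^\bullet\calB,\calC)^{B^2\times G}=\calE xt\bigl(\CH^{\tfs}_{\tloc}(\calC_1\otimes\Lambda^\bullet\calB),\,\CH^{\tfs}_{\tloc}(\calC)\bigr)^{G}.\]

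It remains to identify $\CH^{\tfs}_{\tloc}(\calC_1\otimes\Lambda^\bullet\calB)$ with $\Lambda^\bullet\calB\in\Dper(\Hilb_n(\CC^2))$. Since $\calC_1$ is the identity (trivial-braid) object, its Chern character is the structure sheaf, $\CH^{\tfs}_{\tloc}(\calC_1)=\calO$, consistently with Theorem~\ref{thm:DrZ}; combining this with the compatibility that carries the tautological defect bundle to the tautological bundle on the Hilbert scheme yields $\CH^{\tfs}_{\tloc}(\calC_1\otimes\Lambda^\bullet\calB)=\calO\otimes\Lambda^\bullet\calB=\Lambda^\bullet\calB$. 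Substituting this into the displayed equality produces the left-hand side $\calE xt(\Lambda^\bullet\calB,\CH^{\tfs}_{\tloc}(\calC))^{G}$ and completes the proof.

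The step I expect to be the main obstacle is this last identification together with the equivariance bookkeeping. One must check that the $B^2$-averaging on the matrix-factorization side corresponds precisely to the passage from the presentation $\rmT^*\Fl_n=\GL_n\times\frn_n/B_n$ to the $\Tqt$-equivariant geometry of $\Hilb_n(\CC^2)$, and that neither the localization along $j_G\colon G\to\frg$ nor the Koszul-duality twist in~\eqref{eq:drcato} disturbs the defect bundle $\Lambda^\bullet\calB$ or the internal grading. Once this compatibility is secured, the three $\calE xt$-preservation statements chain together to give the claimed categorical Riemann--Roch identity.
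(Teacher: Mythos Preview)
The paper does not actually prove this theorem here; it is stated as a key property of the functor \(\CH^{\tfs}_{\tloc}\), with both the construction of the adjoint pair~\eqref{eq:mnchcchf} and the proof of this identity deferred to~\cite{OblomkovRozansky18a}. So there is no proof in the paper to compare against, and your proposal must be judged on its own.

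There is a genuine gap: your factorization \(\CH^{\tfs}_{\tloc}=\mathrm{KSZ}\circ\tloc^{\tfrm}\circ\Av\) does not type-check. The averaging functor \(\Av\) goes from \(\MF_{G\times B\times B}(\calX^\bl,W)\) to \(\MF_G(\gl_n^\bl\times\rmT^*\Fl_n\times\rmT^*\Fl_n,W)\), the category attached to the circle \(S^1_{0\uparrow\cdots\uparrow n\downarrow\cdots\downarrow 0}\). By contrast, \(\tloc^{\tfrm}\) and \(\mathrm{KSZ}\) are defined on the Drinfeld-center category \(\MF^{\tfrm}_{\tDr}=\MF_{G_n}\bigl((\frg^{\tfrm}\times\frg^{\tfrm}\times G)^{\ttst},W^{\tfrm}_{\tDr}\bigr)\), the category attached to the \emph{non-intersecting} circle \(S^1_n\). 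These are different categories of matrix factorizations over different spaces with different potentials, so the composite you wrote is not defined. The passage from the first to the second is precisely the nontrivial ``closing up'' (trace) step that the categorical Chern character encodes; it is not supplied by \(\Av\), which only mediates between the Borel-equivariant and flag-variety models of the \emph{same} circle category. Consequently the chain of \(\calE xt\)-preserving equivalences you invoke does not exist as stated, and the Proposition on \(\Av\) cannot carry the argument by itself.

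A more plausible route, consistent with how the paper packages things, is to use the adjunction~\eqref{eq:mnchcchf} directly: if \(\HC^{\st}_{\loc}\) is left adjoint to \(\CH^{\st}_{\loc}\), then \(\calE xt(\Lambda^\bullet\calB,\CH^{\tfs}_{\tloc}(\calC))\cong\calE xt(\HC^{\st}_{\loc}(\Lambda^\bullet\calB),\calC)\), and the content of the theorem reduces to the identification \(\HC^{\st}_{\loc}(\Lambda^\bullet\calB)\cong\calC_1\otimes\Lambda^\bullet\calB\) together with the equivariance bookkeeping. That identification is still work (and lives in~\cite{OblomkovRozansky18a}), but at least the functors compose.
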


Thus we constructed a complex of sheaves \(S_\beta:=\CH^{\tfs}_{\tloc}(\calC_\beta)\) such that its global sections are the knot homology. In the language of \tqft\ the categorical Riemann-Roch and our main theorem is just a gluing property of \tqft: the picture \ref{fg:manyred} and picture
\ref{fg:circlcut} present two different ways of computing
the same partition sum \(\sfZf(\mathbb{R}^2_\beta)\).
\begin{figure}
  \includegraphics[width=2in]{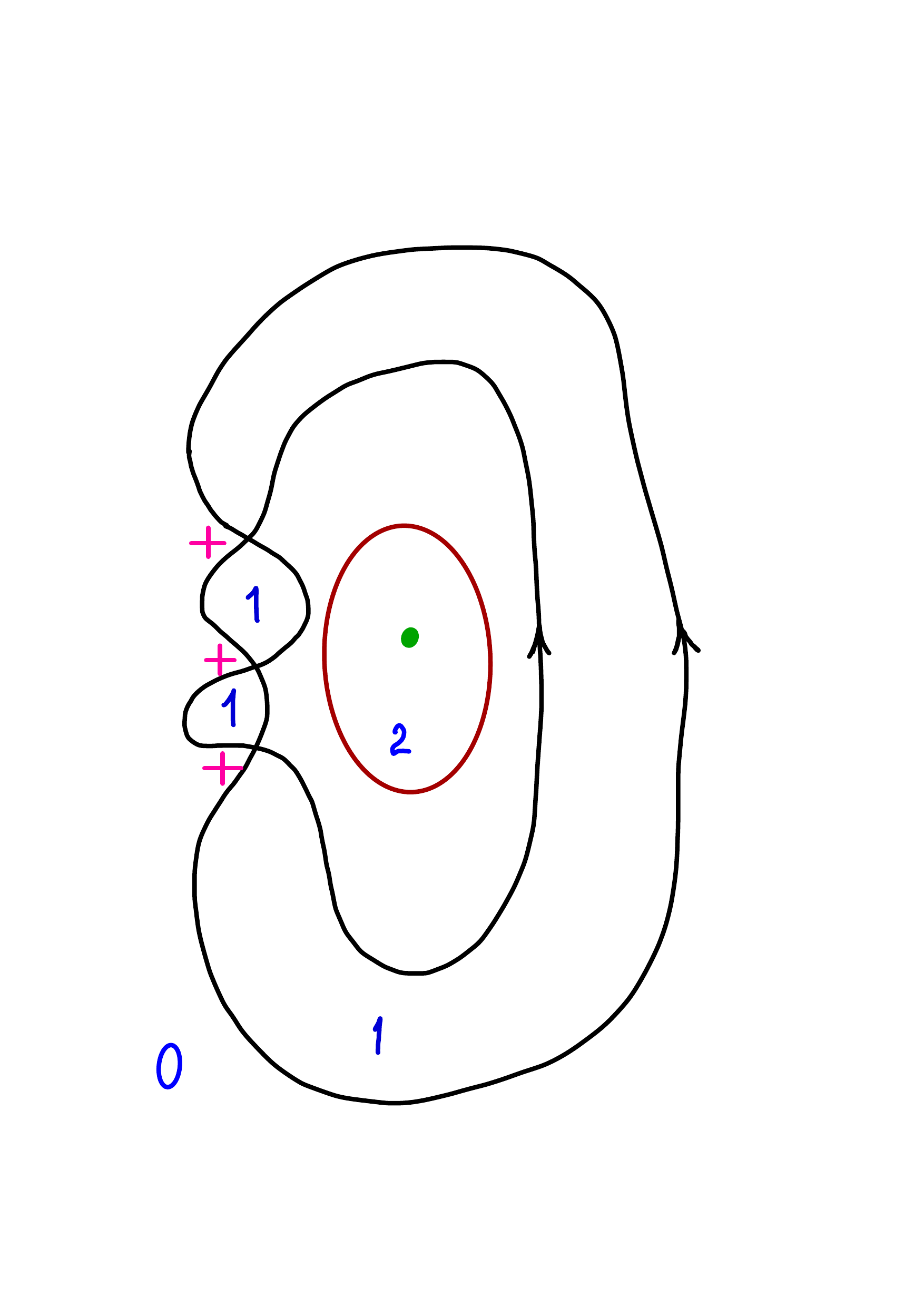}
  \caption{Decomposition of \(S^2_\beta\) on two discs.}
\label{fg:circlcut}
\end{figure}

%\subsection{Triple point}
%\label{sec:triple-point}

% It appears that there is a natural choice for a value of the partition function on triple line \({}^aY_b^{a+b}\times S^1\). Here \({}^aY_b^{a+b}\) is the configuration of
% the defects that separates the plane on two connected components with TQFT's labeled by \(a,b,b+a\), respectively.

\section{Categorification of traces}\label{sec:traces}
In this section we expand on  the table from the introduction. The first column of the table is  a summary of the previous sections. The second column is a physical interpretation of
the results of \cite{OblomkovRozansky20} where a fully-faithful functor from the category of Soergel bimodules to the category of stable matrix factorizations was constructed.
The last column is closely related to the work \cite{AnnoNandakumar16}, \cite{NakajimaTakayama17} and forthcoming preprint of the authors \cite{OblomkovRozansky22}.

\subsection{Soergel bimodules}
\label{sec:soergel-bimodules}

A slight modification of the computation in the proof of proposition~\ref{prop:comp12} shows that:
\[\widetilde{\calF}_n=\widehat{\blacksquare}_1\LaTeXunderbrace{\xPhiwo\cdots\xPhiwo}_{n-1} \calO,\quad \widetilde{\calF}_n=((\widetilde{\gl}_n\times V_n)^{\mathrm{st}},\widetilde{W}_n),\]
where \(\widetilde{\gl}_n=\GL_n\times \frb/B\) is the Grothendieck-Springer alteration with the moment map \(\mu:\widetilde{\gl}_n\to \gl_n\)
and \(\widetilde{W}_n(X,z)=\Tr(X\mu(z))\). The stability condition is given by
requiring \(\CC[\mu(z)]v=V_n\).

Indeed, the computation in the proposition~\ref{prop:comp12} shows that the composition results into the object the \(B\)-equivariant object
\((\gl_n\times \gl_n\times\GL_n\times\mathfrak{n}\times V_n,W'')\) where \(W''(Y',g,X,Y,v)=\Tr(Y'X)+\Tr(X\Ad_gY)\). The last potential, has a quadratic
term:
\[W''(Y',g,X,Y,v)=\Tr((\Ad_g^{-1}X)_{--}(Y+\Ad_g^{-1}Y'))+\Tr((\Ad_g^{-1}X)_+\Ad_g^{-1}Y).\]
where the sub-indices \(--\) and \(+\)  stand for the strictly lower-triangular and the upper-triangular parts of the matrices.
Thus the Kn\"orrer reduced object has potential \(\Tr(X'\Ad_g^{-1}Y)\), \(X'=(\Ad_g^{-1}X)_+\) and it is isomorphic to the object \(\widetilde{\calF}_n\).

The algebra of the MOY graphs \cite{MurakamiOhtsukiYamada98} has an algebraic incarnation in terms of the algebra \(\Br_n^\flat\) of flat braids.
The last algebra can be realized by means of endomorphisms of \(\widetilde{\calF}_n\)

\begin{proposition}\cite[Theorem 1.0.1]{OblomkovRozansky20}
  For any \(n\) there is a monoidal functor \[\Phi^{\flat}_n:\Br_n\to \Hom(\widetilde{\calF}_n,\widetilde{\calF}_n).\]

\end{proposition}

To connect the previous discussion with the second column of  the table from the introduction we examine the picture \eqref{pic:circles} for case when the region  \(\mathbb{B}\) contains the circular defect of type \(\mathrm{NS5}^{(n)}\).
In this case, the ray \(I_{rad}\) that starts at the center of the circle from the picture
\eqref{pic:circles} intersects the domains with labels \(0,n,n-1,\dots,1,0\) and the
corresponding category of morphisms \(\sfZ^{\mathrm{f}}(I_{rad})
=\widehat{\blacksquare}_{-n}(\widetilde{\calF})\) is computed in

\begin{proposition}
  For any \(n\) we have
  \[\widehat{\blacksquare}_{-n}(\widetilde{\calF}_n)=\mathrm{
    Mod}(\cc[y_1,\dots,y_n]).\]
\end{proposition}
\begin{proof}
  The functor \(\widehat{\blacksquare}_{-n}\) corresponds to the object
  \(\tilde{\mathbb{I}}_{0|n}=(\gl_n, \Tr(ZX))\in \HHom(\mathbf{n}^f,\mathbf{0}^f)\). Respectively,
  the composition with the object \(\widetilde{\calF}_n=((\tilde{\gl}_n\times V_n)^{st},\widetilde{W}_n)\in \HHom(\mathbf{0}^f,\mathbf{n}^f)\) is by the \(G\times B\)-equivariant object:
  \begin{equation}\label{eq:0to0}
    \widehat{\blacksquare}_{-n}(\widetilde{\calF}_n)=\tilde{\mathbb{I}}_{0|n}\circ \widetilde{\calF}_n=(\gl_n\times\tilde{\gl}_n\times V_n,\Tr(ZX)+\Tr(X\Ad_gY)).\end{equation}
  The potential of the last object is quadratic in \(Z\) and \(X\) thus after the
  Kn\"orrer reduction we obtain the \(G\times B\) equivariant object
  \(G\times B\)-equivariant object \((G\times \frn\times V_n)^{st}\).
  The action of \(G\times B\) on this object is free and a slice to the orbits of the action
  is defined by the triple \((1,K(y),v^0)\), where \(K(y)_{ij}=\sum_{i=1}^ny_i\delta_{i,i}+\sum_{i=1}^{n-1}\delta_{i,j-1}\),
  \(v^0_i=\delta_{i,n}\).
  \end{proof}

  The category of endomorphism of \(\mathrm{Mod}(R_n)\), \(R_n=\cc[y_1,\dots,y_n]\) is the
  category of two-periodic complexes of \(R_n\)-bimodules
  \(\bim_n\). Respectively, the formula~\eqref{eq:0to0} implies that there is
  a monoidal functor:
\[\mathbb{B}: \Hom(\widetilde{\calF}_n,\widetilde{\calF}_n)\colon \to \bim_n.\]

Combining the last functor with \(\Phi^\flat\) we obtain a realization of
the algebra \(\Br_n^\flat\) inside \(\bim_n\). Thus if we have two radial segments
\(I_{rad}\), \(I'_{rad}\)  with defects between the rays forming \(\beta\in \Br_n^\flat\), the value of the partition function \(\sfZ^f\) on the segment is
\(\mathbb{B}\circ\Phi^\flat(\beta)\).

The bimodule realization of \(\Br_n^\flat\) inside \(\bim\) used for \(\HHH_{alg}\)
is due to Soergel \cite{Soergel00}:
\[\Phi^{S}:\Br_n^\flat\to \Sbim_n\subset \bim_n,\quad \HHH_{alg}(\beta):=\mathrm{HH}_*(\Phi^S(\beta)),\]
where \(\mathrm{HH}_*=\mathrm{Tor}_R\) is functor of Hochschild homology. The Hochschild
homology functor is an trace functor \(\HHH_{alg}(\alpha\cdot \beta)=\HHH_{alg}(\beta\cdot \alpha)
\)

We show in \cite{OblomkovRozansky20} that two realizations of \(\Br_n^\flat\)
actually match and there is a natural isomorphism between the corresponding
trace functors:

\begin{theorem}\cite[Theorem 1.3.1]{OblomkovRozansky20}
  For any \(n\) we have a commuting diagram of monoidal functors:
  \[\begin{tikzcd}
      \Br_n^\flat\arrow[r,"\Phi^S"]\arrow[d,"\Phi^\flat"]&\Sbim_n\arrow[d]\\
       \Hom(\widetilde{\calF}_n,\widetilde{\calF}_n)\arrow[r,"\mathbb{B}"]&\bim_n
    \end{tikzcd}
  \]
  Moreover, the functors intertwine the trace functors:
  \[\mathrm{HH}^*(\Phi^S(\beta))=\calH\mathrm{om}(\Phi^\flat(\beta),\Phi^\flat(1)\otimes\Lambda^*V_n).\]
\end{theorem}

\subsection{Annular homology}
\label{sec:annular-homology}
In this section we explain the relations between the objects in the last column of the table from the introduction.
The column has a natural physical explanation in terms of NS5 and D5 branes as we explained before, see sections and \ref{sec:unified-tr} and \ref{sec:ns5d5}. The mathematical side of the story
is mostly contained in the work of Nakajima-Takayma \cite{NakajimaTakayama17},\cite{Nakajima18} and Anno-Nandakumar \cite{AnnoNandakumar16}.

The work of Nakajima-Takayama \cite{NakajimaTakayama17} explains realization of the \(\GL_n\)-Slodowy slices and their resolutions in terms of Cherkis quivers. These quiver
varieties are related to the objects we study by the linear Koszul duality. We provide more details below.

Let is also point out that the relation between the Slodowy slices has a long history. Indeed, a quiver realization of the \(\GL_n\)-Slodowy slices was conjectured by Nakajima
\cite{Nakajima94} and proven by Maffei \cite{Maffei05}. On other hand, the relation between the Slodowy slices, quiver varieties and slices in the Beilenson-Drinfeld Grassmannian were
studied by Mirkovic and Vybornov \cite{MirkovicVybornov03},\cite{MirkovicVybornov22}.

The quiver realization of the Slodowy slices from the above-mentioned work
is related to Nakajima-Takayama realization \cite{NakajimaTakayama17} by the Hannany-Witten transitions \cite{HannanyWitten97}. The mathematical theory of
Hannany-Witten transitions was developed by Nakajima-Takayma \cite{NakajimaTakayama17}.

Let us denote by \(Fl_{\mu}\), \(\mu_n\le\mu_{n-1}\le\dots\), \(\sum_{i=1}^m\mu_i=n\)
the partial flag variety consisting of nested subspaces of dimension \(M_1=\mu_1,M_2=\mu_1+\mu_2,M_3=\mu_1+\mu_2+\mu_3,\dots\).
Similarly, we denote by \(G_\mu\) the product of the groups \(\GL_{M_1}\times\GL_{M_2}\times\dots\times \GL_{M_{m-1}}\).
Respectively, \(\mathcal{N}(\mu)\subset \gl_n\) is  the nilpotent orbit with the Jordan blocks of size \(\mu_i\) and \(\mathcal{S}(\mu)\) is a transversal slice to the the orbit
\(\mathcal{N}(\mu)\).

\begin{proposition}\cite[Theorem 7.11]{NakajimaTakayama17}\label{prop:Naka}
  The \(G_\lambda\times \GL_n\times G_\mu\)-quotient of the quiver \(Q^{bow}(\lambda,\mu)\)
  pictured below
  \[
    \begin{tikzcd}[column sep=small]
       \CC\arrow[dr] &\CC\arrow[dr]&\dots\arrow[dr]&\CC\arrow[dr]&{}&&&&\\
      \CC^{L_1}\arrow[u]\arrow[r]&\CC^{L_3}\arrow[r]\arrow[u]& \dots\arrow[r]&\CC^{L_{l-1}}\arrow[u]\arrow[r]&\CC^n\arrow[r,shift left]&\arrow[l]\arrow[r,shift left]\CC^{M_{m-1}}&\arrow[l]\arrow[r,shift left]\dots&\arrow[l]\arrow[r,shift left]\CC^{M_2}&\arrow[l]\CC^{M_1}
  \end{tikzcd}
\]
is  isomorphic to \(\overline{\mathcal{N}}(\mu)\cap \mathcal{S}(\lambda)\)  and the quotient of the stable locus is the simultaneous resolution
\(\mathscr{R}_{\lambda,\mu}=p^{-1}(\overline{\mathcal{N}}(\mu)\cap \mathcal{S}(\lambda))\subset T^*Fl_\mu\) with
\(p: T^*Fl(\mu)\to \frg\) being the Springer resolution map.
\end{proposition}

The relevant object in the TQFT setting discussed in our paper is a segment \(I^{\lambda,\mu}\) with ends in the regions marked
with \(\mathbf{0}\) and such that the list of defects it traverses  is
\(\mathrm{NS5}^{\lambda_1},\mathrm{NS5}^{\lambda_2},\dots,\mathrm{NS5}^{\lambda_l},\mathrm{D5}^{\mu_m},\dots,\mathrm{D5}^{\mu_1}\).
As we explained in the section~\ref{sec:ns5d5} the value \(\sfZ(I^{\lambda,\mu})\) of partition function is given by composition of morphisms
\[\sfZ(I^{\lambda,\mu})=\widecheck{\Box}_{\lambda_1}\circ\widecheck{\Box}_{\lambda_2}\circ\dots\widecheck{\Box}_{\lambda_{l}}\circ\widehat{\Box}_{\mu_m}\circ\dots\circ\widehat{\Box}_{\mu_2}\circ\widehat{\Box}_{\mu_1}.\]

Thus  the previous proposition gives a geometric description of the morphism
\[\sfZ(I^{\lambda,\mu})=(Q^{bow}\times \mathrm{Lie}(G_\lambda\times \GL_n\times G_\mu), W^{\lambda,\mu}(z,x)=\Tr(\mu(z)x)).\]

To specialize to the case treated in the table in the introduction we need to set \(\mu=(1^n)\) and \(\lambda=(k,n-k)\).
Since the braid group \(\Br_n\) acts on \(\widehat{\Box}_1^n\) there is monoidal functor
\[\Phi^{\lambda}: \Br_n\to\Hom(\sfZ(I^{\lambda,1^n}),\sfZ(I^{\lambda,1^n})).\]

In the setting of the last column of the table from the introduction the radial rays in the picture \eqref{pic:circles} are the homotopic to the interval
\(I^{(k,n-k),(1^n)}\). Thus the last column of the table is equivalent to the following conjecture that we plan to address in the forthcoming paper \cite{OblomkovRozansky22}:

\begin{conjecture}
  For any \(n\)  a trace functor \(\mathcal{T}r:\Br_n\to 2-\mathrm{gr} \mathrm{Vect}\)
  \[\mathcal{T}r_{k,n-k}(\beta)=\mathcal{H}om(\Phi_{(k,n-k)}(\beta),\Phi_{(k,n-k)}(1_n))\]
  categorifies the trace \(\Tr(\beta)[H^\lambda_{1^n,\lambda+k}]\).
\end{conjecture}

The space \(H^\lambda_{1^n,\lambda+k}\) is the weight \(k\) part of
\(L_1^{\otimes n}\), it is of dimension \({n\choose k}\). To provide an evidence to the conjecture we compute the Euler characteristic of the trace for the identity.
\begin{proposition}
  For any \(n,k\)   we have:
  \[\chi(\mathcal{T}r_{k,n-k}(1_n))= {n\choose k}.\]
\end{proposition}
\begin{proof}
  First let us observe that by the linear Koszul duality for the matrix factorizations and by the proposition~\ref{prop:Naka}
  the category of morphism \(\Hom(\sfZ(I^{\lambda,\mu}),\sfZ(I^{\lambda,\mu}))\) is equivalent to the dg category two-periodic complexes
   \(D^{per}_{\CC^*\times\CC^*}(\mathscr{R}_{\lambda,\mu}\times\mathscr{R}_{\lambda,\mu})\). Respectively, \(\Phi_{\lambda}(1_n)\) is the structure
  sheaf of the diagonal \(\Delta\subset \mathscr{R}_{\lambda,1^n}\times\mathscr{R}_{\lambda,1^n}\).

  Thus \(\mathcal{T}r(1_n)\) is the homology self-intersection of the diagonal and since \(\mathscr{R}_{\lambda,1^n}\) is smooth we get
  \(\mathcal{T}r(1_n)=H^*(\oplus_i \Omega^i_{\mathscr{R}_{\lambda,1^n}})\). Since the \(\CC^*\times\CC^*\)-fixed locus in \(\mathscr{R}_{\lambda,1^n}\) is zero-dimensional,
  the localization formula implies that Euler characteristics \(\chi(\mathcal{T}r(1_n))\) is equal to the number of torus fixed points.

  The 3D mirror partner of \(\mathscr{R}_{(k,n-k),1^n}\)  is the Grassmanian \(\mathrm{T}^*Gr(k,n)\) \cite{NakajimaTakayama17}. The mirror symmetric partners have the same
  number of the torus fixed points and \(\mathrm{T}^*Gr(k,n)\) has \({n\choose k}\) fixed points.
\end{proof}

Finally, let us remark that in the work \cite{AnnoNandakumar16} the category of coherent sheaves on \(\mathscr{R}_{(k,k),(1^{2k})}\) was used to
construct the invariants of the affine tangles and relate this invariant to the annular Khovanov homology \cite{AsaedaPrzytyckiSikora04}. The under/over-crossing  the generators of the affine tangles in \cite{AnnoNandakumar16} match with
the braid generators used in this paper.
The cap/cup of the affine tangle monoid from \cite{AnnoNandakumar16} have a natural interpretation in terms of NS5 defects. Thus in the forthcoming paper \cite{OblomkovRozansky22}
we also address the

\begin{conjecture}
  The construction  of the trace functor \(\mathcal{T}r_{(k,k)}\) can be extended to  affine tangles and the corresponding invariant provides a realization of
  the \(sl_2\) annular Khovanov homology.
\end{conjecture}

\section{Conflict of Interests}
\label{sec:conflict-interests}
On behalf of all authors, the corresponding author states that there is no conflict of interest.

%\bibliographystyle{halpha}
%\bibliographystyle{alpha}
%\bibliography{/Users/oblomkov/Dropbox/bibtexfiles/expanded}
%\bibliography{expanded}
%\input{Lines-3d-a.bbl}
%\end{document}

\newcommand{\etalchar}[1]{$^{#1}$}

\end{document}